\newtheorem{thm}{Theorem}[section]
\newtheorem{lem}[thm]{Lemma}
\newtheorem{cor}[thm]{Corollary}
\newtheorem{prop}[thm]{Proposition}
\theoremstyle{definition}
\theoremstyle{remark}
\newtheorem{rem}[thm]{\textbf{Remark}}
\newtheorem{rems}[thm]{\textbf{Remarks}}
      \def\@makefnmark{%
         \leavevmode
            \raise.9ex\hbox{\check@mathfonts
                \fontsize\sf@size\z@\normalfont%
                            \@thefnmark}%
       }
\newcommand{\h}{\dot{H}^{1}}
\renewcommand{\H}{\dot{H}^{1}_{0}}
\newcommand{\p}{\mathbb{P}}
\newcommand{\D}{\textrm{div}}
\newcommand{\dd}{\textrm{d}}
\begin{document}

\title[]{Global well-posedness of the two-dimensional exterior Navier-Stokes equations for non-decaying data}
\author[]{Ken Abe}
\date{}
\address[K. Abe]{Department of Mathematics, Graduate School of Science, Osaka City University, 3-3-138 Sugimoto, Sumiyoshi-ku Osaka, 558-8585, Japan}
\email{kabe@sci.osaka-cu.ac.jp}
\subjclass[2010]{35Q35, 35K90}
\keywords{Navier-Stokes equations, exterior problem, $D$-solutions, infinite energy}
\date{\today}

\maketitle

%\tableofcontents

\begin{abstract}
We prove global well-posedness of the two-dimensional exterior Navier-Stokes equations for bounded initial data with a finite Dirichlet integral, subject to the non-slip boundary condition. As an application, we construct global solutions for asymptotically constant initial data and arbitrary large Reynolds numbers. 
\end{abstract}

\vspace{15pt}

%セクション1
\section{Introduction}
\vspace{10pt}
We consider the two-dimensional Navier-Stokes equations in an  exterior domain $\Omega\subset \mathbb{R}^{2}$:\\
\begin{equation*}
\begin{aligned}
\partial_t u-\Delta{u}+u\cdot \nabla u+\nabla{p}&= 0 \quad \textrm{in}\quad \Omega\times(0,\infty),    \\
\D\ u&=0\quad \textrm{in}\quad \Omega\times(0,\infty),    \\
u&=0\quad \textrm{on}\quad \partial\Omega\times (0,\infty) ,  \\
u&=u_0\quad\hspace{-4pt} \textrm{on}\quad  \Omega\times \{t=0\}.  
\end{aligned}
\tag{1.1}
\end{equation*}\\
It is well known that the two-dimensional exterior Navier-Stokes equations are globally well-posed for initial data with finite energy \cite{Leray1933}, \cite{Leray1934b}, \cite{La59}. However, global solvability is unknown in general for initial data with infinite energy. An example of infinite energy solutions is a stationary solution with a finite Dirichlet integral, called $D$-solution \cite{Leray1933}. It is known that $D$-solutions are bounded in $\Omega$ and asymptotically constant as $|x|\to\infty$; see Remark 1.2. In other words, $D$-solutions are elements of $L^{\infty}\cap \dot{H}^{1}_{0}$, where $\h$ denotes the homogeneous $L^{2}$-Sobolev space and $\dot{H}^{1}_{0}$ denotes the space of all functions in $\h$, vanishing on $\partial\Omega$.

The purpose of this paper is to establish the global solvability of (1.1) for non-decaying initial data $u_0\in L^{\infty}_{\sigma}\cap \h_0$. We set the solenoidal $L^{\infty}$-space,  

\begin{equation*}
L^{\infty}_{\sigma}(\Omega)
=\left\{ u\in L^{\infty}(\Omega)\ \Bigg|\ \int_{\Omega}u\cdot \nabla \varphi\dd x=0 \quad \textrm{for}\ \varphi\in \hat{W}^{1,1}(\Omega) \right\},  
\end{equation*}\\
by the homogeneous Sobolev space $\hat{W}^{1,1}(\Omega)=\{\varphi\in L^{1}_{\textrm{loc}}(\Omega)\ |\ \nabla \varphi\in L^{1}(\Omega)\ \}$. For exterior domains, the space $L^{\infty}_{\sigma}$ agrees with the space of all bounded divergence-free vector fields, whose normal trace is vanishing on $\partial\Omega$ \cite{AG2}. When $\Omega$ is the whole space, the global solvability of (1.1) is established in \cite{GMS} for merely bounded initial data $u_0\in L^{\infty}_{\sigma}$. Since the vortex stretching is absent for the two-dimensional vorticity equation, 

\begin{align*}
\partial_t \omega+u\cdot\nabla \omega-\Delta\omega&=0\qquad \textrm{in}\ \mathbb{R}^{2}\times (0,\infty),\\
\omega&=\omega_0\hspace{15pt}   \textrm{on}\ \mathbb{R}^{2}\times\{t=0\},
\end{align*}\\
the global a priori estimate 

\begin{align*}
||\omega||_{L^{\infty}(\mathbb{R}^{2})}\leq ||\omega_0||_{L^{\infty}(\mathbb{R}^{2})}\quad t>0,
\end{align*}\\
holds for $\omega=\partial_1 u^{2}-\partial_2 u^{1}$ by the maximum principle. (One may assume that the initial vorticity is bounded by the regularizing effect on $L^{\infty}$ \cite{GIM}.) The vorticity estimate plays a crucial role in order to construct global mild solutions on $L^{\infty}_{\sigma}$. 

When $\Omega$ is a domain with boundaries, vorticity propagation is more involved. The global vorticity estimate may not hold and it is unknown whether the problem (1.1) is globally well-posed on $L^{\infty}_{\sigma}$. For exterior domains, local solvability on $L^{\infty}_{\sigma}$ is recently established in \cite{A5} (see also \cite{A4}).

For initial data with finite energy, there is a global bound for vorticity. Since local solutions exist in $(0,T]$ for some $T>0$ and satisfies

\begin{align*}
\sup_{0<t\leq T}t^{\frac{1}{2}}\Big\{||u||_{L^{\infty}}(t)+||\nabla u||_{L^{2}}(t)\Big\}\leq C||u_0||_{L^{2}},
\end{align*}\\
by the energy equality,

\begin{align*}
\int_{\Omega}|u|^{2}\dd x+2\int_{0}^{t}\int_{\Omega}|\nabla u|^{2}\dd x\dd s= \int_{\Omega}|u_0|^{2}\dd x\quad t>0,
\end{align*}\\ 
local solutions are continued for all $t\geq T$ (see, e.g., \cite{KO}). In other words, global solutions exist, provided that initial data is  decaying sufficiently fast as $|x|\to\infty$. Since local solutions are bounded and with a finite Dirichlet integral for each $t>0$, one may assume the regularity condition $u_0\in L^{\infty}\cap \dot{H}^{1}_{0}$ for initial data with finite energy. On the other hand, a finite Dirichlet integral does not imply decay at space infinity. Thus the condition $u_0\in L^{\infty}\cap \dot{H}^{1}_{0}$ can be viewed as an extension of finite energy.

Our goal is to construct a global mild solution for $u_0\in L^{\infty}_{\sigma}\cap \dot{H}^{1}_{0}$. As explained later in the introduction, the space $L^{\infty}_{\sigma}\cap \dot{H}^{1}_{0}$ includes asymptotically constant vector fields. We prove global solvability of (1.1) for non-decaying initial data $u_0\in L^{\infty}_{\sigma}\cap \dot{H}^{1}_{0}$, and deduce existence of asymptotically constant solutions for \textit{arbitrary large} Reynolds numbers. To state a result, let $S(t)$ denote the Stokes semigroup. Let $\p$ denote the Helmholtz projection. We construct global mild solutions of the form

\begin{equation*}
u(t)=S(t)u_{0}-\int_{0}^{t}S(t-s)\p  (u\cdot \nabla u)(s)ds.     \tag{1.2}
\end{equation*}\\
Let $C([0,T]; X)$ (resp. $C_{w}([0,T]; X)$) denote the space of all continuous (resp. weakly continuous) functions from $[0,T]$ to a Banach space $X$. When $X=L^{\infty}$, the space $C_{w}([0,T]; L^{\infty})$ denotes the space of all weakly-star continuous functions. Since the Stokes semigroup is an analytic semigroup on $L^{\infty}_{\sigma}$ \cite{AG2}, the first term is defined for non-decaying initial data $u_0\in L^{\infty}_{\sigma}\cap \h_0$. The second term is defined by the Helmholtz projection on $L^{2}$ for $u\in C_{w}([0,\infty); L^{\infty})$ satisfying $\nabla u\in C_{w}([0,\infty); L^{2})$. The mild solutions constructed in this paper are sufficiently smooth and satisfy (1.1) in a suitable sense (see Remarks 5.1). The main result of this paper is the following:

\vspace{10pt}

%thm1
\begin{thm}
Let $\Omega$ be an exterior domain with $C^{3}$-boundary in $\mathbb{R}^{2}$. Assume that $u_0\in L^{\infty}_{\sigma}\cap \dot{H}^{1}_{0}$. Then, there exists a unique mild solution $u\in C_{w}([0,\infty); L^{\infty})$ of (1.1) satisfying $\nabla u\in C_{w}([0,\infty); L^{2})$.
\end{thm}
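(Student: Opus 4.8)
The plan is to construct the solution by a fixed-point argument in a space combining $L^\infty$ control of the velocity with $L^2$ control of the gradient, iterating a local-in-time result up to a global one via an a priori estimate that exploits the two-dimensional energy structure. First I would fix $T>0$ and set up the solution map $\Phi(u)(t)=S(t)u_0-\int_0^tS(t-s)\p(u\cdot\nabla u)(s)\,ds$ on the complete metric space $X_T=\{u\in C_w([0,T];L^\infty): \nabla u\in C_w([0,T];L^2)\}$ with norm $\|u\|_{X_T}=\sup_{0<t\le T}\big(\|u\|_{L^\infty}(t)+\|\nabla u\|_{L^2}(t)\big)$. The linear term $S(t)u_0$ lies in $X_T$: boundedness in $L^\infty$ uses analyticity of the Stokes semigroup on $L^\infty_\sigma$ from \cite{AG2}, and $\nabla S(t)u_0\in L^2$ with $\|\nabla S(t)u_0\|_{L^2}\le\|\nabla u_0\|_{L^2}$ follows because $\h_0$ is (essentially) the energy space for the Stokes flow, so $S(t)$ is a contraction there; weak continuity at $t=0$ comes from density arguments plus the uniform bounds. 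For the bilinear term I would estimate $u\cdot\nabla u$ in, say, $L^2$ (using $\|u\|_{L^\infty}\|\nabla u\|_{L^2}$) and in a suitable $L^p$ with $p<2$ to get $L^\infty$ smoothing from $S(t-s)$, together with $\nabla$-estimates; the gradient bound on the Duhamel term requires the $L^2$–$L^2$ and $L^p$–$\dot H^1$ mapping properties of $S(t)$ established in the earlier sections of the paper. This yields a unique local mild solution on $[0,T_0]$ with $T_0$ depending only on $\|u_0\|_{L^\infty}+\|\nabla u_0\|_{L^2}$, and uniqueness in $C_w([0,\infty);L^\infty)$ with $\nabla u\in C_w([0,\infty);L^2)$ follows by a Gronwall argument on the difference of two solutions.

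To globalize, the key is an a priori bound on $\|u(t)\|_{L^\infty}+\|\nabla u(t)\|_{L^2}$ that does not degenerate in finite time. Since $u_0$ need not decay, I cannot use the finite-energy identity directly; instead I would write $u=v+$ (a suitable solenoidal extension carrying the boundary/infinity behavior) or, more in the spirit of the $D$-solution theory, decompose $u_0$ into a bounded divergence-free background and a finite-energy perturbation and derive an energy inequality for the perturbation, picking up lower-order terms controlled by the background and Gronwall. The genuinely two-dimensional input is that once an $L^2$-in-space, $L^\infty$-in-time bound on $\nabla u$ is in hand, the corresponding vorticity $\omega=\partial_1u^2-\partial_2u^1$ satisfies a transport-diffusion equation whose $L^\infty$ norm (or an appropriate weighted/localized version near $\partial\Omega$) can be controlled — this is the mechanism that in the whole-space case gives $\|\omega\|_{L^\infty}\le\|\omega_0\|_{L^\infty}$, and near the boundary one uses the regularity theory for the Stokes system together with the already-established local bounds to absorb the boundary vorticity production. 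From the velocity gradient and vorticity control one recovers a global $L^\infty$ bound on $u$ via the Biot–Savart/Stokes representation, and feeds it back into the local existence time to see that $T_0$ can be taken uniform, hence the solution extends to $[0,\infty)$.

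The main obstacle, I expect, is precisely the global a priori estimate near the boundary: unlike the whole-space case there is no maximum principle for the vorticity because $\partial\Omega$ is a source of vorticity, and unlike the finite-energy case the energy identity is unavailable for non-decaying data. The resolution should combine (i) an energy-type inequality for a finite-energy perturbation of a fixed bounded solenoidal field, valid in two dimensions where the nonlinearity is energy-subcritical, with (ii) the boundedness and gradient estimates for the Stokes semigroup on $L^\infty_\sigma\cap\h_0$ proved earlier in the paper, used to control the Duhamel term uniformly in $t$ after the energy bound caps the growth of $\|\nabla u\|_{L^2}$. Continuity issues — upgrading to $C_w$ and handling $t\to0$ for non-decaying data — are then routine once the uniform bounds and the semigroup continuity properties are in place, and uniqueness is a standard Gronwall argument using $\|u\|_{L^\infty}$ and $\|\nabla u\|_{L^2}$ as the controlling quantities.
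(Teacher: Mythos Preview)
Your local-existence outline is broadly in the spirit of the paper (Theorem~3.1 with $v\equiv 0$), though the paper actually carries the additional weight $t^{1/2}\|\nabla u\|_{L^\infty}$ in the iteration norm, using the composition estimate $\|S(t)\p\D F\|_{L^\infty}\lesssim t^{-(1-\alpha)/2}\|F\|_{L^\infty}^{1-\alpha}\|F\|_{W^{1,\infty}}^{\alpha}$ rather than an $L^p\to L^\infty$ smoothing with $p<2$.

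The genuine gap is in your globalization strategy. You propose to decompose $u_0$ into a static bounded background plus a finite-energy piece, and then to close via vorticity control and Biot--Savart. Neither step is what the paper does, and the vorticity route has the very obstruction you name but do not resolve: in an exterior domain $\partial\Omega$ produces vorticity and there is no maximum principle for $\omega$; ``absorbing the boundary vorticity production via Stokes regularity'' is not made precise and there is no clear mechanism to bound $\|\omega\|_{L^\infty}$ globally this way. Likewise, a static decomposition of $u_0$ with $u_0-\text{(background)}\in L^2$ is not obviously available for general $u_0\in L^\infty_\sigma\cap\dot H^1_0$.

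The paper's mechanism is different and avoids vorticity entirely. It takes the \emph{time-dependent} reference $v=S(t)u_0$ and sets $w=u-v$, so $w$ solves the perturbed system (3.1) with $w_0=0$. Because $w_0=0$, $w$ has finite energy from the start: a direct energy computation plus Gronwall (Lemma~4.3) gives $\|w\|_{L^\infty_tL^2_x}^2+\|\nabla w\|_{L^2_{t,x}}^2\le \tfrac12(e^{2N^2t}-1)$, hence $w\in L^4(0,T_0;L^4)$ by Ladyzhenskaya. Global $L^\infty$ control of $w$ then comes not from vorticity but from an $L^4$ \emph{blow-up criterion}: local $L^4$ well-posedness of (3.1) (Lemma~A.1) yields a lower bound $\|w(t)\|_{L^4}\gtrsim (T_*-t)^{-1/4}$ near a hypothetical blow-up time, which contradicts $w\in L^4_{t,x}$; bootstrapping gives $w\in C([0,T_0];L^\infty)$, and a separate integral-form estimate then closes $\nabla w\in C([0,T_0];L^2)$. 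Since $v=S(t)u_0$ is globally bounded in $L^\infty\cap\dot H^1$ by Corollary~2.2, the local solution $u=v+w$ is globally bounded and hence extends.

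In short: your identification of ``energy for a perturbation'' is the right instinct, but the correct reference is the Stokes flow $S(t)u_0$, not a static splitting of $u_0$; and the passage from the energy bound to a global $L^\infty$ bound goes through an $L^4$ blow-up argument, not vorticity/Biot--Savart.
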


\vspace{5pt}

\begin{rem}
($D$-solutions) We constructed global solutions of (1.1) in $L^{\infty}\cap \h$. The space $L^{\infty}\cap\h$ is a natural space for studying non-decaying solutions. So far, various stationary solutions have been constructed in $L^{\infty}\cap \h$, while the existence of global solutions was unknown for the non-stationary problem (1.1). Stationary solutions of (1.1) were first constructed by Leray \cite{Leray1933} based on an approximation as $R\to\infty$ of the problem 

\begin{align*}
-\Delta u_{R}+u_{R}\cdot \nabla u_{R}+\nabla p_{R}&=0\quad \textrm{in}\ \Omega_R,\\
\D\ u_R&=0\quad \textrm{in}\ \Omega_R,\\
u_{R}&=0\quad \textrm{on}\ \partial\Omega,\\
u_{R}&=u_{\infty}\hspace{4pt} \textrm{on}\ \{|x|=R\},
\end{align*}\\
for $\Omega_{R}=\Omega\cap \{|x|<R\}$ and the constant $u_{\infty}\in \mathbb{R}^{2}$ (see also \cite[Chapter 5, Theorem 5]{Lady61}). The solutions constructed by the Leray's method are with a finite Dirichlet integral and called Leray's solution. Later on, stationary solutions with a finite Dirichlet integral are constructed also by the Galerkin method in \cite[Theorem 3.2]{Fujita61} (\cite[Theorem X.4.1]{Gal}). We refer to any stationary solutions of (1.1) with a finite Dirichlet integral as $D$-solution. Note that a finite Dirichlet integral does not imply a global bound as $|x|\to\infty$ (e.g., $u=(\log|x|)^{\alpha}$, $0<\alpha<1/2$). It is proved by Gilbarg and Weinberger \cite{GW} (\cite{GW2}) that Leray's solutions are bounded in $\Omega$ and converge to some constant $\overline{u}_{\infty}$ in the sense that $\int_{0}^{2\pi}|u(re_{r})-\overline{u}_{\infty}|^{2}\dd \theta\to 0$ as $r\to\infty$, where $(r,\theta)$ is the polar coordinate and $e_{r}=(\cos\theta,\sin\theta)$. Moreover, every $D$-solutions are bounded and asymptotically constant in the above sense \cite[Theorem 12]{Amick88}. Thus, $D$-solutions are elements of  $L^{\infty}_{\sigma}\cap \dot{H}^{1}_{0}$. 

When the constant $u_{\infty}$ is sufficiently small ($u_{\infty}\neq 0$), Finn and Smith \cite[Corollary 4.2]{FS} constructed unique stationary solutions of (1.1) satisfying $u-u_{\infty}=O(|x|^{-\varepsilon-1/4})$ as $|x|\to\infty$ for some $\varepsilon>0$. The solutions with this decay rate are called $PR$-solution \cite[Section 4]{Smith65}. It is known that $PR$-solutions have the faster decay $u-u_{\infty}=O(|x|^{-1/2})$ as $|x|\to\infty$ \cite{Smith65}. Since the $PR$-solutions are with a finite Dirichlet integral \cite[Lemma 5.2]{FS}, they are element of $L^{\infty}_{\sigma}\cap \dot{H}^{1}_{0}$. See \cite{Gal04}, \cite{KPR} for more information about stationary solutions. Note that for large Reynolds numbers, existence of stationary solutions is a long standing open question. Here, we regard the constant $|u_{\infty}|$ as the Reynolds number. As stated below in Theorem 1.3, Theorem 1.1 implies existence of global solutions of (1.1) for arbitrary large Reynolds numbers.
\end{rem}

\vspace{10pt}

From a physical point of view, when the Reynolds number is large, a viscous fluid can vary behind the obstacle periodically or irregularly. It is natural to consider the non-stationary problem (1.1) for studying non-decaying flows of large Reynolds numbers. So far, existence of global solutions of (1.1) was unknown for asymptotically constant initial data. One may construct global solutions of (1.1) for small Reynolds numbers by subtracting a $PR$-solution from $u$ and reducing the problem to decaying initial data with finite energy. We deduce existence of global solutions from Theorem 1.1 without constructing stationary solutions, and obtain asymptotically constant solutions for arbitrary large Reynolds numbers. 

To state a result, let $BUC$ denote the space of all bounded uniformly continuous functions in $\overline{\Omega}$. Let $BUC_{\sigma}$ denote the space of all solenoidal vector fields in $BUC$, vanishing on $\partial\Omega$. We consider asymptotically constant initial data $u_0\in BUC_{\sigma}\cap \h_{0}$ such that 

\begin{align*}
\lim_{R\to\infty}\sup_{|x|\geq R}|u_0(x)-u_{\infty}|=0.
\end{align*}\\
For simplicity of the notation, we shall denote by $u_0\to u_{\infty}$ as $|x|\to\infty$. Since the Stokes semigroup $S(t)u_0$ is asymptotically constant as $|x|\to\infty$ for such the initial data, sending $|x|\to\infty$ to (1.2) implies that mild solutions are also asymptotically constant. From Theorem 1.1, we deduce the following:

\vspace{10pt}

%thm1.3
\begin{thm}
Assume that $u_\infty\in \mathbb{R}^{2}$ and $u_0\in BUC_{\sigma}\cap\h_{0}$ satisfy $u_0\to u_{\infty}$ as $|x|\to\infty$. Then, the mild solution $u\in C([0,\infty); BUC)$ satisfies $u\to u_{\infty}$ as $|x|\to\infty$ for each $t\geq 0$.
\end{thm}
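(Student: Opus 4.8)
The plan is to read off Theorem 1.3 from Theorem 1.1 by passing to the limit $|x|\to\infty$ in the integral equation (1.2). Two things have to be arranged: that the solution of Theorem 1.1 — a priori only weakly continuous — in fact belongs to $C([0,\infty);BUC)$ when $u_{0}\in BUC_{\sigma}$, and that each term of (1.2) is asymptotically constant with the expected constant. For the first point, since $u_{0}\in BUC_{\sigma}$ the linear term $S(t)u_{0}$ lies in $C([0,\infty);BUC_{\sigma})$ because the Stokes semigroup restricts to a $C_{0}$-analytic semigroup there \cite{AG2}; moreover the bounds from the proof of Theorem 1.1 give $\sup_{0\le s\le T}(\|u\|_{L^{\infty}}+\|\nabla u\|_{L^{2}})(s)<\infty$, hence $u\cdot\nabla u\in L^{\infty}(0,T;L^{2})$, so together with the two-dimensional smoothing bound $\|S(\tau)\p g\|_{L^{\infty}}\lesssim\tau^{-1/2}\|g\|_{L^{2}}$, the inclusion $S(\tau)\p g\in BUC$ for $\tau>0$, and the integrability of $(t-s)^{-1/2}$, the Duhamel term in (1.2) also belongs to $C([0,\infty);BUC)$. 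Thus $u\in C([0,\infty);BUC)$.

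For the asymptotic behaviour I would fix a solenoidal carrier field $b:=\nabla^{\perp}(\chi\psi)$, where $\psi$ is an affine function with $\nabla^{\perp}\psi=u_{\infty}$ and $\chi\in C^{\infty}(\mathbb{R}^{2})$ satisfies $\chi\equiv 0$ on a neighbourhood of $\mathbb{R}^{2}\setminus\Omega$ and $\chi\equiv 1$ for $|x|\ge\rho$ with $\rho$ large. Then $\D b=0$, $b\in L^{\infty}_{\sigma}\cap\dot{H}^{1}_{0}$, $b\equiv u_{\infty}$ for $|x|\ge\rho$, $b\equiv 0$ near $\partial\Omega$, and both $\Delta b$ and $\D(b\otimes b)$ have compact support. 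Setting $v:=u-b$, we get a field that again lies in $C([0,\infty);BUC_{\sigma})$ with a finite Dirichlet integral at each time, with $v(\cdot,0)=u_{0}-b\to 0$ as $|x|\to\infty$ by hypothesis and $v|_{\partial\Omega}=0$; inserting $u=v+b$ into (1.1) and using $\D v=\D b=0$ (so that $u\cdot\nabla u=\D\big((v+b)\otimes(v+b)\big)$), the mild form of the equation for $v$ reads
\begin{equation*}
v(t)=S(t)(u_{0}-b)+\int_{0}^{t}S(t-s)\,\p\Big(\Delta b-\D\big(v\otimes v+v\otimes b+b\otimes v+b\otimes b\big)\Big)(s)\,ds .
\end{equation*}

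It then remains to show that, for each fixed $t$, every term of this identity vanishes as $|x|\to\infty$. The term $S(t)(u_{0}-b)$ does, because $S(t)$ maps solenoidal $BUC$-fields vanishing at infinity into themselves; the contributions of $\Delta b$ and of $\D(b\otimes b)$ do, because these are compactly supported and $S(\tau)\p$ applied to a compactly supported $L^{2}$-function decays at infinity; and the contribution of $\D(v\otimes v+v\otimes b+b\otimes v)$ does, because $v\otimes v$, $v\otimes b$, $b\otimes v$ are bounded and vanish at infinity while the composite operator $S(\tau)\,\p\,\D$ carries tensors vanishing at infinity to vector fields vanishing at infinity — the $L^{\infty}$-bounds of the first step and the integrable singularity $(t-s)^{-1/2}$ legitimizing the interchange of $\lim_{|x|\to\infty}$ with $\int_{0}^{t}$. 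Because the last term is quadratic (and linear) in $v$, this is cleanest to phrase as an invariance statement: the closed subspace of $C([0,T];BUC)$ consisting of functions vanishing at infinity at every time is invariant under the fixed-point map associated with the displayed equation on a short interval $[0,T]$, so its unique fixed point $v$ lies there; iterating on $[T,2T],[2T,3T],\dots$ — legitimate since $v$ is global by Theorem 1.1 and $v(\cdot,kT)$ vanishes at infinity — yields $v(\cdot,t)\to 0$ at infinity for every $t\ge 0$. Hence $u(\cdot,t)=v(\cdot,t)+b\to u_{\infty}$, the case $t=0$ being the hypothesis.

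The main obstacle is the two linear mapping properties invoked above — that $S(t)$ and the composite $S(t)\,\p\,\D$ propagate vanishing at infinity on an exterior domain — since the Helmholtz projection and the two-dimensional exterior Stokes resolvent are nonlocal and delicate (recall that the steady exterior Stokes problem has no decaying solution with prescribed nonzero constant boundary data). These should, however, follow from the pointwise Stokes-kernel estimates developed for Theorem 1.1; a secondary technical point is keeping those spatial-decay bounds uniform in $s\in(0,t)$ up to the endpoint $s=t$, which is precisely what validates passing the limit inside the time integral.
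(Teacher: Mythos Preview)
Your plan has the right instinct—show the Duhamel piece decays and the linear piece stays near $u_{\infty}$—but the route you take is more involved than necessary, and the step you yourself flag as the ``main obstacle'' is precisely where the argument has a real gap. The claim that $S(t)\mathbb{P}\,\textrm{div}$ carries tensors vanishing at infinity to vector fields vanishing at infinity on a two-dimensional exterior domain is never proved in the paper; in fact no pointwise Stokes-kernel estimates are developed for Theorem~1.1 at all (the analyticity on $L^{\infty}\cap\dot{H}^{1}$ is obtained from $L^{2}$ fractional powers plus the approximation Lemma~2.4), and in the two-dimensional exterior setting such decay-preservation statements are delicate because of the Stokes paradox. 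Your iteration-in-a-decay-subspace scheme therefore rests on an unverified linear mapping property, and the resolution you propose is not available.

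The paper bypasses all of this with one elementary observation (Proposition~6.3): any function in $BUC\cap L^{2}(\Omega)$ automatically vanishes at infinity. There is then no carrier field $b$ and no fixed-point iteration. Split $u=S(t)u_{0}+u_{2}$ with $u_{2}$ the Duhamel integral. Since $u\in L^{\infty}$ and $\nabla u\in L^{2}$ for each $t$, one has $u\cdot\nabla u\in L^{\infty}(0,T_{0};L^{2})$, and the smoothing bound $\|S(\tau)\mathbb{P}f\|_{L^{p}}\le C\tau^{-(1/2-1/p)}\|f\|_{L^{2}}$ (estimate~(4.3)) puts $u_{2}(\cdot,t)$ directly into $BUC\cap L^{2}$, hence $u_{2}\to 0$ at infinity. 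For the linear piece one proves $\|S(t)u_{0}-u_{0}\|_{L^{2}}\le Ct^{1/2}\|u_{0}\|_{L^{\infty}\cap\dot{H}^{1}}$ via $A^{1/2}$ on $L^{2}$ (first for compactly supported data, then by Lemma~2.4), so $S(t)u_{0}-u_{0}\in BUC\cap L^{2}$ and $S(t)u_{0}\to u_{\infty}$. Membership in $L^{2}$ does all the work; no propagation-of-decay property of $S(t)$ or of $S(t)\mathbb{P}\,\textrm{div}$ is ever needed.
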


\vspace{5pt}

For the two-dimensional Euler equations, global solvability of the exterior problem is proved in \cite{Kikuchi} for asymptotically constant initial data. In the paper, global solutions are constructed by Schauder's fixed point theorem for sufficiently smooth and asymptotically constant initial data, satisfying the decay condition of vorticity 

\begin{align*}
\int_{\Omega}(1+|x|^{\theta})|\omega_{0}(x)|\dd x<\infty,\quad\textrm{for some}\ \theta>0.
\end{align*}\\
For the two-dimensional ideal flows, vorticity moves along a stream line and does not increase. Hence by using a stream line, the vorticity can be regarded as a fixed point of some map between a space of bounded functions (see also \cite{Kato1967}). 

On the other hand, for viscous flows we expect increase of vorticity since there exist boundary layers. As we have seen in the introduction, for decaying initial data sufficiently fast as $|x|\to\infty$, vorticity is globally bounded in $L^{2}$ by the energy inequality, and solutions are sufficiently smooth for all $t>0$ even if new vortices are produced from the boundaries. For slowly decaying or non-decaying initial data, we expect that stronger vorticity is produced from the boundaries. Indeed, for viscous flows of large Reynolds numbers, separations of boundary layers can produce turbulence behind the obstacle $\Omega^{c}$ (depending on shapes of the body $\Omega^{c}$). See \cite{LL} for example. So far, a global vorticity bound was unknown for such non-decaying flows. We proved that vorticity of non-decaying flows in $L^{\infty}\cap \h$ are globally bounded in $L^{2}$ for each $t>0$ and solutions are sufficiently smooth for all $t>0$ for arbitrary large Reynolds numbers. Note that Theorem 1.3 holds for exterior domains of class $C^{3}$, independently of shapes of the body $\Omega^{c}$.

Theorem 1.3 implies existence of global solutions for general viscosities $\nu>0$ and arbitrary Reynolds numbers $R>0$. Here, we set the Reynolds number by

\begin{align*}
R=\frac{|u_{\infty}|}{\nu}\textrm{diam}\ \Omega^{c}.
\end{align*}\\
In fact, for solutions $(u,p)$ constructed in Theorem 1.3, we consider the scaling 

\begin{align*}
u_{\nu}(x,t)=\frac{1}{\nu^{\alpha}}u\Big(\frac{x}{\nu^{1+\alpha}},\frac{t}{\nu^{1+2\alpha}} \Big),\quad p_{\nu}(x,t)=\frac{1}{\nu^{2\alpha}}p\Big(\frac{x}{\nu^{1+\alpha}},\frac{t}{\nu^{1+2\alpha}} \Big)
\end{align*}\\
with some constant $\alpha\in \mathbb{R}$ and obtain the global solution $(u_{\nu},p_{\nu})$ for $\nu>0$ in the exterior domain $\Omega_{\nu}=\nu^{1+\alpha}\Omega$ satisfying $u_{\nu}\to \nu^{-\alpha} u_{\infty}$ as $|x|\to\infty$. The Reynolds number is a dimensionless parameter, used as an important index for characterizing status of viscous flows. One may observe that for fluid flows of large Reynolds numbers, the viscosities $\nu>0$ are relatively small. However, such flows may not be simply understood like ideal flows. Indeed, it is known that small viscosities $\nu>0$ have a significant effect in boundary layers \cite{LL}. Theorem 1.3 implies existence of global solutions for small viscosities $\nu>0$ and large Reynolds numbers $R>0$.

It is an interesting question whether mild solutions approach stationary solutions as time goes to infinity. When $\Omega$ is the whole space, mild solutions for $u_0\in L^{\infty}_{\sigma}\cap \h$ approach constants as $t\to\infty$. Since the vorticity satisfies the global estimate 

\begin{align*}
||\omega||_{L^{p}(\mathbb{R}^{2})}\leq \frac{C}{t^{\frac{1}{2}-\frac{1}{p}}}||\omega_0||_{L^{2}(\mathbb{R}^{2})}\quad t>0,
\end{align*}\\
for $2\leq p\leq \infty$ (e.g., \cite[Chapter 2]{GGS}), the Biot-Savart law implies that 

\begin{align*}
||\nabla u||_{L^{p}(\mathbb{R}^{2})}
\leq C_{p}||\omega||_{L^{p}(\mathbb{R}^{2})}
\to 0\quad \textrm{as}\ t\to\infty\quad \textrm{for}\ 2<p<\infty.
\end{align*}\\
Since stationary solutions of (1.1) with a finite Dirichlet integral in $\mathbb{R}^{2}$ must be constants by the Liouville-type theorem \cite[Theorem 2]{GW}, one can rephrase that the limit as $t\to\infty$ is a trivial stationary solution.

When $\Omega$ is a half space, global solvability of (1.1) is unknown for non-decaying initial data; see \cite{Sl03}, \cite{Mar09}, \cite{BJ}, \cite{A5} for local solvability results. The statement of Theorem 1.1 is valid also for a half space and global mild solutions exist for $u_0\in L^{\infty}_{\sigma}\cap \h_{0}(\mathbb{R}^{2}_{+})$. (For a half space, the condition $u \in \dot{H}^{1}_{0}$ implies a decay as $|x|\to\infty$. See Remarks 6.4 (ii).) It is unknown whether the corresponding Liouville-type theorem holds for a half space since vorticity does not vanish on the boundary. We refer to \cite{Seregin15}, \cite{GHM} for Liouville-type theorems in a half space. Note that for exterior domains, Leray's solutions are indeed non-trivial \cite[Theorem 29]{Amick88}.\\

\vspace{15pt}

Let us sketch the proof of Theorem 1.1. We construct global mild  solutions based on the Stokes semigroup on $L^{\infty}\cap \h$. We set 

\begin{align*}
&|f|_{\dot{H}^{1}}=||\nabla f||_{L^{2}}, \\
&||f||_{L^{\infty}\cap \dot{H}^{1}}=||f||_{L^{\infty}}+|f|_{\dot{H}^{1}}.
\end{align*}\\
The space $L^{\infty}\cap \dot{H}^{1}$ is a Banach space equipped with the norm $||\cdot ||_{L^{\infty}\cap \dot{H}^{1}}$. Note that the norm is homogeneous by the scaling, i.e., $||f_{\lambda}||_{L^{\infty}\cap \h}=\lambda ||f||_{L^{\infty}\cap \h}$ for $f_{\lambda}(x)=\lambda f(\lambda x)$, $\lambda>0$. We first prove the a priori estimate of the Stokes flow 

\begin{align*}
\sup_{0<t\leq T_0}\Big\{\|S(t)u_0\|_{L^{\infty}\cap \h}
+t||A S(t)u_0||_{L^{\infty}\cap \h}\Big\}
\leq C||u_0||_{L^{\infty}\cap \h}  \tag{1.3}
\end{align*}\\
for $u_0\in L^{\infty}_{\sigma}\cap \h_{0}$ and $T_0>0$. Here, $A$ denotes the Stokes operator. The estimate (1.3) implies that the Stokes semigroup is an analytic semigroup on $L^{\infty}_{\sigma}\cap \h_{0}$. When $\Omega=\mathbb{R}^{2}$, the Stokes semigroup agrees with the heat semigroup and the estimate (1.3) holds; see Remarks 2.6 (i). We prove (1.3) for two-dimensional exterior domains by using fractional powers of the Stokes operator on $L^{2}$ and an approximation for $u_0\in L^{\infty}_{\sigma}\cap \h_{0}$. The estimate (1.3) implies the regularity properties
 
\begin{align*}
v,\ t^{\frac{1}{2}}\nabla v\in C_{w}([0,T_0]; L^{\infty}),\quad
\nabla v\in C_{w}([0,T_0]; L^{2}),   \tag{1.4}
\end{align*}\\
for the Stokes flow $v=S(t)u_0$ and $u_0\in L^{\infty}_{\sigma}\cap \h_{0}$.

The second step of the proof is to construct local-in-time solutions of (1.1). In this paper, we study local solvability of the Navier-Stokes system in a general form 

\begin{equation*}
\begin{aligned}
\partial_t w-\Delta{w}+w\cdot \nabla w+v\cdot \nabla w+\nabla{\pi}&= -w\cdot \nabla v-v\cdot \nabla v \quad \textrm{in}\quad \Omega\times(0,T_0),    \\
\D\ w&=0\quad\qquad\qquad\qquad \textrm{in}\quad \Omega\times(0,T_0),    \\
w&=0\qquad\qquad\qquad\quad \textrm{on}\quad \partial\Omega\times (0,T_0) ,  \\
w&=w_0\hspace{70pt}\textrm{on}\quad  \Omega\times \{t=0\}.  
\end{aligned}
\tag{1.5}
\end{equation*}\\
Observe that the system (1.5) agrees with the original system (1.1) for $v\equiv 0$. Moreover, for the Stokes flow $v=S(t)u_0$ and the solution $u$ of (1.1), the difference $w=u-v$ satisfies the system (1.5) for $w_0\equiv 0$. We establish a local solvability of (1.5) on $L^{\infty}\cap\h$ for a solenoidal vector field $v$ satisfying (1.4) and deduce the existence of the local solution $u$ of (1.1).

A crucial step is to extend the local solution $u$ of (1.1) by a global energy estimate. We apply a global energy estimate of the perturbed system (1.5) for $w_0\equiv 0$ of the form

\begin{equation*}
\begin{aligned}
&\int_{\Omega}|w|^{2}\dd x+\int_{0}^{t}\int_{\Omega}|\nabla w|^{2}\dd x\dd s\leq \frac{1}{2}(e^{2N^{2}t}-1)\quad 0\leq t\leq T_0, \\[15pt]
&N=\sup_{0<t\leq T_0}\{||v||_{L^{\infty}\cap \h}+t^{\frac{1}{2}}||\nabla v||_{L^{\infty}}\}.
\end{aligned}
\end{equation*}\\
Since the global energy is bounded for the two-dimensional system (1.5), local solutions of (1.5) for $w_0\equiv 0$ are globally bounded on $L^{\infty}\cap H^{1}$. We prove a global bound on $L^{\infty}\cap H^{1}$ by using an $L^{p}$-blow-up estimate of the system (1.5). Since $w=u-v$ solves the perturbed system (1.5) for $w_0\equiv 0$ and $v=S(t)u_0$, we deduce that $u-v$ is globally bounded on $L^{\infty}\cap H^{1}$ and the local solution $u$ is continued for all $t>0$.

This paper is organized as follows. In Section 2, we prove that the Stokes semigroup is an analytic semigroup on $L^{\infty}_{\sigma}\cap \dot{H}^{1}_{0}$. In Section 3, we establish local solvability of the system (1.5) on $L^{\infty}_{\sigma}\cap \dot{H}^{1}_{0}$. In Section 4, we prove that mild solutions of (1.5) for $w_0\equiv 0$ are globally bounded on $L^{\infty}\cap H^{1}$. In Section 5, we prove Theorem 1.1. In Section 6, we study asymptotic behavior of the Stokes flow as $|x|\to\infty$ and prove Theorem 1.3. In Appendix A, we prove local solvability of (1.5) on $L^{p}$ for $p> 2$ and deduce a blow-up estimate used in Section 4. In Appendix B, we note $L^{p}$-estimates of a fractional power of the Stokes operator in a half space, related to Remarks 2.6 (ii).\\

\vspace{5pt}

After the first draft of this paper is written, the author learned the papers \cite{SawadaTaniuchi}, \cite{Zelik} on large time $L^{\infty}$-estimates of bounded solutions in $\mathbb{R}^{2}$. Although vorticity of the Cauchy problem is uniformly bounded for all $t>0$, a uniform $L^{\infty}$-estimate for velocity is unknown. In \cite{GMS}, Giga et al. gave a double exponential bound for the sup-norm of velocity by using a logarithmic Gronwall's inequality. Their estimate is later improved by Sawada-Taniuchi \cite{SawadaTaniuchi} to a single exponential bound by some approximation argument. More recently, the result is further improved by Zelik \cite{Zelik} to a linear growth estimate by using a uniform local energy. We refer to a lecture note of Gallay \cite{Gallay14} for this uniform local energy estimate. For periodic initial data in one space direction, a uniform $L^{\infty}$-estimate is proved by Gallay-Slijep\v{c}evi\'c \cite{GallayS} (see also \cite{GallayS2}). 

The author learned a recent result of Maremonti-Shimizu \cite{MaremontiShimizu}. In the paper, global solutions in an exterior domain are constructed for merely bounded initial data by a perturbation from $\mathbb{R}^{2}$. See \cite{A5} for the local solvability result on $L^{\infty}_{\sigma}$. For exterior domains solutions near the boundary may be understood as finite energy because of the compact boundary. On the other hand, for a half space global existence of non-decaying solutions is unknown. Theorem 1.1 implies existence of global solutions at least for slowly decaying data $u_0\in L^{\infty}_{\sigma}\cap \dot{H}^{1}_{0}(\mathbb{R}^{2}_{+})$. We refer to a recent paper \cite{GWittwer} on stationary solutions in a half space.

\vspace{20pt}

%Section 2
\section{The Stokes semigroup on $L^{\infty}\cap \dot{H}^{1}$}

\vspace{10pt}

In this section, we prove that the Stokes semigroup is an analytic semigroup on $L^{\infty}_{\sigma}\cap \h_0$. We first prove the a priori estimate of the Stokes flow (1.3) for compactly supported initial data by using fractional powers of the Stokes operator on $L^{2}$, and then extend initial data by an  approximation by using the Bogovski{\u\i} operator. After the proof of Theorem 2.1, we remark on higher dimensional cases (Remarks 2.6). \\

\vspace{5pt}

%thm 2.1
\begin{thm}
Let $\Omega$ be an exterior domain with $C^{3}$-boundary in $\mathbb{R}^{2}$. For $T_0>0$, there exists a constant $C$ such that the estimate 

\begin{align*}
\sup_{0<t\leq T_0}\Big\{\|S(t)v_0\|_{L^{\infty}\cap \h}
+t||A S(t)v_0||_{L^{\infty}\cap \h}\Big\}
\leq C||v_0||_{L^{\infty}\cap \h}  \tag{2.1}
\end{align*}\\
holds for $v_0\in L^{\infty}_{\sigma}\cap \H$. In particular, the Stokes semigroup forms a (not strongly continuous) analytic semigroup on $L^{\infty}_{\sigma}\cap \H$.
\end{thm}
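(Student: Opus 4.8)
The plan is to prove the a priori estimate (2.1) first for compactly supported data $v_0 \in C^\infty_{c,\sigma}(\Omega)$ (or $v_0$ in the domain of high powers of the $L^2$-Stokes operator), where every quantity is classical and we may freely use the $L^2$-theory of the Stokes operator, and then to pass to general $v_0 \in L^\infty_\sigma \cap \H$ by a density/approximation argument. For the qualitative conclusion—that $S(t)$ is an analytic (non-$C_0$) semigroup on $L^\infty_\sigma \cap \H$—the standard route is to combine (2.1) with the semigroup property of $S(t)$ on $L^\infty_\sigma$ (available from \cite{AG2}) and on $L^2_\sigma$: the bound $t\|AS(t)v_0\|_{L^\infty\cap\h} \le C\|v_0\|_{L^\infty\cap\h}$ is precisely the quantitative hallmark of analyticity, and one upgrades it to analyticity in a sector by the usual iteration $S(t) = S(t/n)^n$ together with the resolvent characterization. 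Lack of strong continuity at $t=0$ is inherited from the $L^\infty$ component, exactly as in \cite{AG2}.

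For the core estimate on nice data I would split $\|S(t)v_0\|_{L^\infty\cap\h} = \|S(t)v_0\|_{L^\infty} + \|\nabla S(t)v_0\|_{L^2}$ and treat the three pieces (the $L^\infty$-bound on $v$, the $L^2$-bound on $\nabla v$, and the bound on $t A v$) separately. The $L^\infty$-bound on $S(t)v_0$ and on $t A S(t)v_0$ in $L^\infty$ is the analyticity of the Stokes semigroup on $L^\infty_\sigma$ from \cite{AG2}. For the Dirichlet-integral part I would use the $L^2$-Stokes operator $A$ on $L^2_\sigma$: since $\|\nabla f\|_{L^2} \simeq \|A^{1/2} f\|_{L^2}$ on $\H \cap L^2_\sigma$, we have $\|\nabla S(t)v_0\|_{L^2} \simeq \|A^{1/2}S(t)v_0\|_{L^2} = \|A^{1/2}S(t/2) \, S(t/2)v_0\|_{L^2} \le C t^{-1/2}\|S(t/2)v_0\|_{L^2}$—but this introduces an $L^2$-norm of the data, which we do not control. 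So instead I would keep the $\nabla$ inside: writing $\nabla S(t)v_0$ and estimating $\|A^{1/2}S(t)v_0\|_{L^2} \le \|A^{1/2}S(t)\|_{L^2\to L^2}$ is circular, hence the right move is to bound $\frac{d}{dt}\|\nabla S(t)v_0\|_{L^2}^2 = -2\|A S(t)v_0\|_{L^2}^2 \le 0$, so $\|\nabla S(t)v_0\|_{L^2}$ is nonincreasing and hence $\le \|\nabla v_0\|_{L^2} \le \|v_0\|_{L^\infty\cap\h}$, giving the Dirichlet part for free on data in the domain of $A$. Similarly $\|A S(t)v_0\|_{L^2} \le t^{-1/2}\|\nabla v_0\|_{L^2}$ by analyticity of $A$ on $L^2_\sigma$ combined with $\|A^{1/2}\cdot\|\simeq\|\nabla\cdot\|$; to convert this into the $\h$-bound on $tAS(t)v_0$ (i.e. $\|\nabla A S(t)v_0\|_{L^2}$) I would iterate: $\nabla A S(t)v_0 \sim A^{3/2}S(t)v_0 = A^{3/2}S(t/2)\,A^0 S(t/2)v_0$, bounded by $C t^{-3/2}\|S(t/2)v_0\|_{L^2}$—again the unwanted $L^2$-norm. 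The clean alternative is to use $A^{3/2}S(t)v_0 = A S(t/2)\, A^{1/2}S(t/2)v_0$, so $\|\nabla A S(t)v_0\|_{L^2} \le C\|A S(t/2) A^{1/2}S(t/2)v_0\|_{L^2}\le Ct^{-1}\|A^{1/2}S(t/2)v_0\|_{L^2}\le Ct^{-1}\|A^{1/2}v_0\|_{L^2}\le Ct^{-1}\|\nabla v_0\|_{L^2}$, which is the desired $t^{-1}$ singularity. The $L^\infty$ part of $tAS(t)v_0$ is again from \cite{AG2}.

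For the passage to general $v_0 \in L^\infty_\sigma \cap \H$ I would approximate $v_0$ by $v_0^{(k)} \in C^\infty_{c,\sigma}(\Omega)$ with $v_0^{(k)} \to v_0$ in $\H$ and $\|v_0^{(k)}\|_{L^\infty} \le C\|v_0\|_{L^\infty}$ uniformly in $k$—this is where the Bogovski\u\i\ operator enters: one cuts off $v_0$ on an annulus and corrects the divergence produced by the cutoff using Bogovski\u\i's operator on that annulus, and the $W^{1,1}\hookrightarrow$-type bounds plus the Dirichlet energy of $v_0$ control the correction. Then (2.1) holds for each $v_0^{(k)}$ with a constant independent of $k$, and one passes to the limit: the $\h$-convergence handles the Dirichlet part directly, while for the $L^\infty$ part one uses that $S(t)$ is bounded on $L^\infty_\sigma$ and weak-$\ast$ lower semicontinuity of the $L^\infty$-norm (and similarly for the $AS(t)$ term, using that $AS(t)$ maps $L^\infty_\sigma$ to $L^\infty_\sigma$ boundedly with norm $\lesssim t^{-1}$ from \cite{AG2}). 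The main obstacle is precisely constructing the approximating sequence with simultaneous control of both norms—the $L^\infty$-bound must not blow up as the cutoff annulus recedes to infinity—which is why the Bogovski\u\i\ correction has to be done on a fixed-width annulus at radius $R$ and the estimate has to exploit $\nabla v_0 \in L^2$ rather than any decay of $v_0$ itself; getting a uniform-in-$R$ $L^\infty$-bound on that correction term is the delicate point.
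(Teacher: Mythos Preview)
Your approach matches the paper's: split into the $L^\infty$-part (handled by \cite{AG2}) and the $\dot{H}^1$-part; for the latter, use the identification $|f|_{\dot{H}^1}=\|A^{1/2}f\|_{L^2}$ on $D(A^{1/2})=H^1_{0,\sigma}$ together with the analyticity of $S(t)$ on $L^2_\sigma$ to get $|S(t)v_0|_{\dot{H}^1}\le |v_0|_{\dot{H}^1}$ and $t\,|AS(t)v_0|_{\dot{H}^1}\le |v_0|_{\dot{H}^1}$ for compactly supported data; then approximate general $v_0\in L^\infty_\sigma\cap\dot{H}^1_0$ by compactly supported elements via a cutoff plus Bogovski\u{\i} correction. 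Your identification of the uniform $L^\infty$-bound on the Bogovski\u{\i} correction as the delicate step is exactly right---the paper handles it by Poincar\'e plus Sobolev on the annulus $D_m$, tracking the scaling in $m$.

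One point to adjust: the approximants $v_{0,m}=v_0\theta_m-B_{D_m}[v_0\cdot\nabla\theta_m]$ do \emph{not} in general converge to $v_0$ in $\dot{H}^1$. Since the $\dot{H}^1$-seminorm is scale-invariant in two dimensions, $\|v_0\nabla\theta_m\|_{L^2}\le \|v_0\|_\infty\|\nabla\theta\|_{L^2}$ does not tend to zero, and the Bogovski\u{\i} correction has the same defect. The paper's Lemma~2.4 therefore only claims the uniform bound $\|v_{0,m}\|_{L^\infty\cap\dot{H}^1}\le C\|v_0\|_{L^\infty\cap\dot{H}^1}$ together with pointwise convergence $v_{0,m}\to v_0$ a.e.; the limit in the estimate is then passed using that $S(t)v_{0,m}\to S(t)v_0$ locally uniformly in $\overline{\Omega}\times(0,\infty)$ (a consequence of the pointwise convergence of the data, from \cite{AG2}) and lower semicontinuity of the $\dot{H}^1$-seminorm under this convergence. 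So in your last paragraph, replace ``$v_0^{(k)}\to v_0$ in $\dot{H}^1$'' by the weaker pointwise convergence plus uniform norm bound, and pass to the limit as the paper does.
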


\vspace{5pt}

In the sequel, we use the following regularity properties of the Stokes flow deduced from Theorem 2.1.

\vspace{5pt}

%Corollary 2.2
\begin{cor}
For $v_0\in L^{\infty}_{\sigma}\cap \h_0$, the Stokes flow $v=S(t)v_0$  satisfies 

\begin{align*}
&\sup_{0<t\leq T_0}\Big\{\|S(t)v_0\|_{L^{\infty}\cap \h}
+t^{\frac{1}{2}}||\nabla S(t)v_0||_{L^{\infty}}\Big\}
\leq C||v_0||_{L^{\infty}\cap \h},   \tag{2.2} \\[5pt]
&v, t^{\frac{1}{2}}\nabla v\in C_{w}([0,T_0]; L^{\infty}),\quad \nabla v\in C_{w}([0,T_0]; L^{2}).  \tag{2.3}
\end{align*}\\
\end{cor}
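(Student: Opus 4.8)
The plan is to obtain (2.2) and (2.3) from Theorem 2.1 together with the standard mapping properties of the Stokes semigroup on $L^{\infty}_{\sigma}$ from \cite{AG2}. Since the hypothesis $v_0\in L^{\infty}_{\sigma}\cap\h_{0}$ is precisely that of Theorem 2.1, the bound for $\|S(t)v_0\|_{L^{\infty}\cap\h}$ in (2.2) is nothing but (2.1) with the term $t\|AS(t)v_0\|_{L^{\infty}\cap\h}$ dropped. For the remaining term $t^{1/2}\|\nabla S(t)v_0\|_{L^{\infty}}$ I would use the $L^{\infty}$-gradient estimate $\|\nabla S(t)f\|_{L^{\infty}}\le Ct^{-1/2}\|f\|_{L^{\infty}}$, valid for $f\in L^{\infty}_{\sigma}$: writing $S(t)v_0=S(t/2)\big(S(t/2)v_0\big)$ and inserting $\|S(t/2)v_0\|_{L^{\infty}}\le C\|v_0\|_{L^{\infty}\cap\h}$ from (2.1) gives $t^{1/2}\|\nabla S(t)v_0\|_{L^{\infty}}\le C\|v_0\|_{L^{\infty}\cap\h}$ for $0<t\le T_0$. (Alternatively one can try to stay inside the fractional-power framework of the proof of Theorem 2.1, but the above is the quickest route.)

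For the regularity (2.3) I would treat the open interval $(0,T_0]$ and the endpoint $t=0$ separately. On $(0,T_0]$, analyticity of the semigroup and (2.1) do the job: $\|AS(\tau)v_0\|_{L^{\infty}\cap\h}\le C\tau^{-1}\|v_0\|_{L^{\infty}\cap\h}$ is locally integrable on $(0,T_0]$, so $v(t)-v(t_0)=-\int_{t_0}^{t}AS(\tau)v_0\,\dd\tau\to 0$ in $L^{\infty}\cap\h$ as $t\to t_0$ for every $t_0>0$, i.e.\ $v\in C((0,T_0];L^{\infty}_{\sigma}\cap\h)$ in norm; this already gives norm-continuity of $v$ into $L^{\infty}$ and of $\nabla v$ into $L^{2}$ on $(0,T_0]$. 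For $t^{1/2}\nabla v$, apply the bounded operator $t^{1/2}\nabla S(t/2):L^{\infty}_{\sigma}\to L^{\infty}$ (which depends norm-continuously on $t>0$ by analyticity and \cite{AG2}) to the norm-continuous curve $\tau\mapsto v(\tau/2)$ to get $t^{1/2}\nabla v\in C((0,T_0];L^{\infty})$ in norm. Hence all three curves in (2.3) are weakly, resp.\ weakly-star, continuous on $(0,T_0]$, and (2.2) supplies uniform bounds near $t=0$; only continuity at $t=0$ is left.

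At $t=0$, weak-star continuity of $v$ in $L^{\infty}$, i.e.\ $S(t)v_0\to v_0$ in the weak-star topology of $L^{\infty}$, is the known weak-star continuity of the Stokes semigroup on $L^{\infty}_{\sigma}$ from \cite{AG2}, applied once one notes that for $v_0\in L^{\infty}_{\sigma}\cap\h_{0}\subset L^{\infty}_{\sigma}$ the flow $S(t)v_0$ of Theorem 2.1 coincides with the $L^{\infty}_{\sigma}$-Stokes flow — which is built into the approximation used in that proof. The other two statements then follow by a density/duality argument: $\{\nabla v(t)\}_{0<t\le T_0}$ is bounded in $L^{2}$ by (2.2), so it suffices to test against $\phi\in C^{\infty}_{c}(\Omega)$, and $\langle\partial_j v(t),\phi\rangle=-\langle v(t),\partial_j\phi\rangle\to-\langle v_0,\partial_j\phi\rangle=\langle\partial_j v_0,\phi\rangle$ as $t\to 0$ by the weak-star continuity just recorded, whence $\nabla v(t)\rightharpoonup\nabla v_0$ in $L^{2}$; similarly $\{t^{1/2}\nabla v(t)\}$ is bounded in $L^{\infty}$ and $t^{1/2}\langle\partial_j v(t),\phi\rangle=-t^{1/2}\langle v(t),\partial_j\phi\rangle\to 0$, whence $t^{1/2}\nabla v(t)\rightharpoonup 0$ weakly-star in $L^{\infty}$. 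This yields (2.3), with $t^{1/2}\nabla v$ taking the value $0$ at $t=0$. The only inputs beyond Theorem 2.1 are the $L^{\infty}$-gradient estimate and the weak-star continuity at $t=0$ of the Stokes semigroup on $L^{\infty}_{\sigma}$, both in \cite{AG2}, so I do not anticipate a genuine obstacle; the single point needing brief care is the consistency, for $v_0$ in the intersection $L^{\infty}_{\sigma}\cap\h_{0}$, of the Stokes flow from Theorem 2.1 with the $L^{\infty}_{\sigma}$-theory.
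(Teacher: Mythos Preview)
Your argument is correct and follows essentially the same route as the paper: (2.2) comes from (2.1) together with the $L^{\infty}$-gradient estimate of \cite{AG2}, and (2.3) is obtained from weak-star continuity of $S(t)$ on $L^{\infty}_{\sigma}$ at $t=0$ plus a duality/boundedness argument for $\nabla v$ and $t^{1/2}\nabla v$. The paper's proof is more terse---it does not spell out the interior continuity on $(0,T_0]$ via analyticity as you do---but the key ideas coincide.
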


\begin{proof}
The estimate (2.2) follows from (2.1) and the $L^{\infty}$-estimate of the Stokes semigroup \cite{AG2}. Since $v=S(t)v_0$ converges to $v_0$ weakly-star on $L^{\infty}$ as $t\to0$, the function $t^{1/2}\nabla v$ converges to zero weakly-star on $L^{\infty}$. Moreover, $\nabla v$ converges to $\nabla v$ weakly on $L^{2}$ by (2.2).
\end{proof}

\vspace{5pt}

Let $H^{1}_{0,\sigma}$ denote the $H^{1}$-closure of $C_{c,\sigma}^{\infty}$, the space of all smooth solenoidal vector fields with compact support in $\Omega$. In oder to prove Theorem 2.1, we prepare a characterization of the space $H^{1}_{0,\sigma}$ and an approximation for $v_0\in L^{\infty}_{\sigma}\cap \h_{0}$ by compactly supported functions.

\vspace{10pt}

\begin{prop}
Let $\Omega$ be an exterior domain with Lipschitz boundary in $\mathbb{R}^{n}$, $n\geq 2$. Then, the space $H^{1}_{0,\sigma}$ agrees with the space of all solenoidal vector fields in $H^{1}_{0}$, where $H^{1}_{0}$ denotes the space of all functions in $H^{1}$, vanishing on $\partial\Omega$.
\end{prop}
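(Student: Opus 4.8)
The inclusion $H^{1}_{0,\sigma}\subseteq\{u\in H^{1}_{0}:\operatorname{div}u=0\}$ is immediate, since $C^{\infty}_{c,\sigma}$ consists of solenoidal fields in $H^{1}_{0}$ and this set is closed in $H^{1}$: the trace map $H^{1}\to H^{1/2}(\partial\Omega)$ and the divergence $H^{1}\to L^{2}$ are continuous because $\partial\Omega$ is Lipschitz. Thus the content is the reverse inclusion, and the plan is to approximate a given solenoidal $u\in H^{1}_{0}(\Omega)$ in $H^{1}$ by fields in $C^{\infty}_{c,\sigma}(\Omega)$ in three stages: cut off at space infinity, move the support off $\partial\Omega$ while staying solenoidal, and mollify. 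For the first stage, let $\chi_{R}$ be smooth with $\chi_{R}=1$ on $B_{R}$ and $\operatorname{supp}\chi_{R}\subset B_{2R}$. Then $\chi_{R}u\to u$ in $H^{1}$ (using only $u,\nabla u\in L^{2}$ and $|\nabla\chi_{R}|\lesssim 1/R$), while $\operatorname{div}(\chi_{R}u)=\nabla\chi_{R}\cdot u$ is supported in the annulus $A_{R}=B_{2R}\setminus B_{R}$ and has vanishing mean, as $\chi_{R}u$ has compact support and zero trace on $\partial\Omega$. The Bogovski{\u\i} operator on $A_{R}$, rescaled to the unit annulus, provides $w_{R}\in H^{1}_{0}(A_{R})$ with $\operatorname{div}w_{R}=\nabla\chi_{R}\cdot u$ and $\|w_{R}\|_{H^{1}}\lesssim\|u\|_{L^{2}(A_{R})}\to0$; hence $u_{R}:=\chi_{R}u-w_{R}$ is solenoidal, lies in $H^{1}_{0}(\Omega)$, is compactly supported, and converges to $u$ in $H^{1}$. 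So it suffices to treat compactly supported solenoidal $u\in H^{1}_{0}(\Omega)$.

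The heart of the matter is the second stage. Since $\partial\Omega$ is compact and Lipschitz, I would cover it by finitely many charts in which, after a rigid motion, $\Omega=\{x_{n}>\gamma(x')\}$ with $\gamma$ Lipschitz; refining the cover so that the ``vertical'' directions $e_{n}$ of overlapping charts make a small angle, I patch the constant vector fields $e_{n}$ with a partition of unity to obtain $V\in C^{\infty}_{c}(\mathbb{R}^{n})$, supported in a neighbourhood of $\partial\Omega$, which at every boundary point lies strictly inside the interior Lipschitz cone. Its flow $\Phi_{t}$ then maps $\overline{\Omega}$ into $\Omega$ with $\operatorname{dist}(\Phi_{t}(x),\partial\Omega)\gtrsim t$ for $x\in\partial\Omega$ and small $t$, and is the identity off the collar. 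Transporting $u$ by the Piola transform along $\Phi_{t}$ — $u_{t}(y)=\tfrac{1}{\det D\Phi_{t}(x)}D\Phi_{t}(x)\,u(x)$ with $x=\Phi_{t}^{-1}(y)$, extended by zero — preserves the divergence-free condition and the $H^{1}$-regularity, gives $u_{t}\to u$ in $H^{1}$ as $t\to0$, and makes $\operatorname{supp}u_{t}=\Phi_{t}(\operatorname{supp}u)$ a compact subset of $\Omega$. Mollifying $u_{t}$ at a scale below $\operatorname{dist}(\operatorname{supp}u_{t},\partial\Omega)$ then yields fields in $C^{\infty}_{c,\sigma}(\Omega)$ converging to $u_{t}$ in $H^{1}$, since mollification commutes with the divergence; letting $t\to0$ gives $u\in H^{1}_{0,\sigma}(\Omega)$. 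A variant avoiding the Piola transform is to localize $u$ by a partition of unity, restore the divergence constraint of each piece by a Bogovski{\u\i} field supported in the corresponding chart (arranged, via the Bogovski{\u\i} construction for finite unions of Lipschitz pieces, so that these corrections sum to zero), and then translate each resulting solenoidal, locally supported piece in the inward $x_{n}$-direction of its chart — a translation preserves solenoidality and pushes the support into $\{x_{n}>\gamma(x')+\tau\}\Subset\Omega$ — before mollifying.

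The step I expect to be the genuine obstacle is precisely this separation of the support from a merely Lipschitz boundary: either the construction of the inward-transversal field $V$ (there is no classical normal, so the compatibility of the chart directions must be arranged by hand and the pushing property of $\Phi_{t}$ checked from the graph bounds), or, equivalently, carrying out the localized divergence corrections with $H^{1}$-control — both resting on the existence of a bounded right inverse of $\operatorname{div}$ from $H^{1}_{0}$ onto the mean-zero subspace of $L^{2}$, which for Lipschitz (in particular exterior) domains is the Bogovski{\u\i} theorem. Everything else — the cutoff at infinity, the mollification, and the standard fact that on a Lipschitz domain an $H^{1}$-field of zero trace is an $H^{1}$-limit of $C^{\infty}_{c}$-fields — is routine. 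Finally, two shortcuts are worth recording. By duality: $H^{1}_{0,\sigma}$ is closed in $H^{1}_{0}$, and a functional $f\mapsto\int(h\cdot f+\nabla h\!:\!\nabla f)$ vanishing on $C^{\infty}_{c,\sigma}$ forces $h-\Delta h=\nabla q$ with $q\in L^{2}$, by de Rham's theorem together with the $L^{2}$-pressure estimate on Lipschitz domains; testing against a solenoidal $u\in H^{1}_{0}$ gives $\int(h\cdot u+\nabla h\!:\!\nabla u)=-\int q\,\operatorname{div}u=0$, hence $u\in H^{1}_{0,\sigma}$. And for $n=2$ one may use the stream function $u=\nabla^{\perp}\psi$ (single-valued, the circulation of $u$ around each hole vanishing since $u$ has zero trace on $\partial\Omega$), add finitely many smooth compactly supported fields $\nabla^{\perp}\xi_{j}$ to make $\psi$ constant on all boundary components so that $\psi$ lies, up to an additive constant, in $H^{2}_{0}$ of the truncated domain, and approximate $\psi$ in $H^{2}$ by $C^{\infty}_{c}$-functions.
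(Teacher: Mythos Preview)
Your argument is correct, but it takes a different and considerably longer route than the paper. The paper's proof is a two-line reduction: using the same cutoff $\theta_{R}$ and a Bogovski{\u\i} correction on the annulus $D_{R}$, it splits $v_{0}=v_{1}+v_{2}$ with $v_{1}$ a solenoidal element of $H^{1}_{0}$ on the \emph{bounded} Lipschitz domain $\Omega\cap B_{2R}$ and $v_{2}$ a solenoidal $H^{1}$-field on $\mathbb{R}^{n}$ supported in $\{|x|\ge R\}$; it then simply cites the known density results for bounded domains and for the whole space (Sohr, II.2.2.3 and II.2.5.5). Your first stage is the same cutoff-plus-Bogovski{\u\i} step, but instead of invoking the bounded-domain result you reprove it by hand in stage two, via the Piola transport along an inward-transversal flow (or the localize--translate--correct variant). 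That buys self-containment and makes explicit what happens near a merely Lipschitz boundary, at the price of constructing the transversal field $V$ and checking the pushing property of $\Phi_{t}$; the paper's reduction buys brevity and offloads all boundary work to the citation.

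One small caution on your duality ``shortcut'': on an exterior domain the de~Rham pressure $q$ produced from $h-\Delta h=\nabla q$ is a~priori only in $L^{2}_{\mathrm{loc}}$, so the identity $\langle\nabla q,u\rangle=-\int q\,\mathrm{div}\,u$ for a general solenoidal $u\in H^{1}_{0}$ (not compactly supported) needs a word of justification --- e.g.\ by first passing to the compactly supported approximants $u_{R}$ of your stage one, which rather undercuts its status as a standalone shortcut. In the $n=2$ stream-function remark, the obstruction to single-valuedness of $\psi$ is the \emph{flux} $\int_{\gamma}u\cdot n$ through curves around the holes, not the circulation; your conclusion is nevertheless correct since $u=0$ on $\partial\Omega$ forces these fluxes to vanish.
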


\vspace{10pt}

%lemma2.3
\begin{lem}[Approximation]
Let $\Omega$ be an exterior domain in $\mathbb{R}^{2}$. There exists a constant $C$ such that for $v_0\in L^{\infty}_{\sigma}\cap \h_{0}$ there exists a sequence $\{v_{0,m}\}_{m=1}^{\infty} \subset L^{\infty}_{\sigma}\cap H^{1}_{0}$ with compact support in $\overline{\Omega}$ such that 

\begin{equation*}
\begin{aligned}
&||v_{0,m}||_{L^{\infty}\cap \h}\leq C||v_0||_{L^{\infty}\cap \h},\\
&v_{0,m}\to v_{0}\quad \textrm{a.e. in}\ \Omega \quad \textrm{as}\ m\to\infty.
\end{aligned}
\tag{2.4}
\end{equation*}
\end{lem}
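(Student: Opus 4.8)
The plan is to truncate $v_0$ with a standard spatial cutoff and then restore the divergence-free condition with the Bogovski{\u\i} operator on a dyadic annulus, the only delicate point being to keep all constants independent of the truncation radius. Fix a radial $\chi\in C_c^\infty(\mathbb{R}^{2})$ with $\chi\equiv 1$ on $B_1$, $\chi\equiv 0$ off $B_2$, and set $\chi_R(x)=\chi(x/R)$, so $|\nabla\chi_R|\le C/R$ and $\mathrm{supp}\,\nabla\chi_R\subset A_R:=\{R\le|x|\le 2R\}$. Choose $R_0$ with $\partial\Omega\subset B_{R_0}$ and work with $R\ge R_0$, so that $\overline{A_R}\subset\Omega$. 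First I would note that $f_R:=\nabla\chi_R\cdot v_0=\D(\chi_R v_0)$ (using $\D v_0=0$) is supported in $A_R$ and has zero mean there: since $\chi_R\in\hat W^{1,1}(\Omega)$ (it is smooth with $\nabla\chi_R\in L^{1}$), the definition of $L^{\infty}_{\sigma}$ gives $\int_{A_R}f_R\,\dd x=\int_{\Omega}v_0\cdot\nabla\chi_R\,\dd x=0$.

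Next I would apply the Bogovski{\u\i} operator on $A_R$ to obtain $b_R\in W^{1,q}_0(A_R)$, $2<q<\infty$, extended by zero to $\Omega$, with $\D b_R=f_R$. Rescaling $A_R$ to the fixed annulus $A_1$ and using the scale-invariance of the gradient bound of the Bogovski{\u\i} operator yields, with constants depending only on $q$,
\[
\|\nabla b_R\|_{L^{q}(A_R)}\le C_q\|f_R\|_{L^{q}(A_R)},\qquad \|b_R\|_{L^{q}(A_R)}\le C_q R\,\|f_R\|_{L^{q}(A_R)} .
\]
Since $\|f_R\|_{L^{q}(A_R)}\le (C/R)\|v_0\|_{L^{\infty}}|A_R|^{1/q}=C\|v_0\|_{L^{\infty}}R^{2/q-1}$, combining these with the Sobolev embedding $W^{1,q}(A_R)\hookrightarrow L^{\infty}(A_R)$ (again rescaled to $A_1$, tracking the $R$-weights) gives $\|b_R\|_{L^{\infty}}\le C\|v_0\|_{L^{\infty}}$ with $C$ independent of $R$; likewise $\|\nabla b_R\|_{L^{2}(A_R)}\le C\|f_R\|_{L^{2}(A_R)}\le C\|v_0\|_{L^{\infty}}$.

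Then I would set $v_{0,R}:=\chi_R v_0-b_R$. By construction $\D v_{0,R}=0$ and $v_{0,R}$ has compact support in $\overline{\Omega}$; moreover $\chi_R v_0\in H^{1}$ (it is supported in $B_{2R}$ and $v_0\in L^{\infty}\cap\h$), $\chi_R v_0=v_0=0$ on $\partial\Omega$, and $b_R\in H^{1}_0(A_R)$ vanishes near $\partial\Omega$, so $v_{0,R}\in L^{\infty}_{\sigma}\cap H^{1}_0$. For the norm, $\|\chi_R v_0\|_{L^{\infty}}\le\|v_0\|_{L^{\infty}}$, while $\nabla(\chi_R v_0)=\chi_R\nabla v_0+v_0\otimes\nabla\chi_R$ with $\|\chi_R\nabla v_0\|_{L^{2}}\le|v_0|_{\h}$ and $\|v_0\otimes\nabla\chi_R\|_{L^{2}(A_R)}\le (C/R)\|v_0\|_{L^{\infty}}|A_R|^{1/2}\le C\|v_0\|_{L^{\infty}}$; together with the bounds on $b_R$ this gives $\|v_{0,R}\|_{L^{\infty}\cap\h}\le C\|v_0\|_{L^{\infty}\cap\h}$ uniformly in $R\ge R_0$. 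For the convergence, $\mathrm{supp}\,b_R\subset\{|x|\ge R\}$ and $\chi_R\equiv 1$ on $\{|x|\le R\}$, so for each fixed $x\in\Omega$ one has $v_{0,R}(x)=v_0(x)$ as soon as $R>|x|$; hence $v_{0,R}\to v_0$ everywhere on $\Omega$. Choosing $R_m=R_0+m$ and $v_{0,m}:=v_{0,R_m}$ then gives (2.4).

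The main obstacle is the uniform-in-$R$ control of the corrector $b_R$ in $L^{\infty}$ (and in $\h$): the Bogovski{\u\i} estimate and the Sobolev embedding must be applied on the dilated annulus $A_R$ with their constants tracked under $x\mapsto x/R$, so that the growth $|A_R|\sim R^{2}$ cancels exactly against the decay $|\nabla\chi_R|\lesssim R^{-1}$ and the $R$-weights in the rescaled inequalities. (Because $v_0\in L^{\infty}$ a plain cutoff suffices here; no logarithmic cutoff is needed, in contrast with the situation for data merely in $\h$.)
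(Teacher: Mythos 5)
Your proposal is correct and follows essentially the same route as the paper: cut off with $\chi_R$, correct the divergence with the Bogovski{\u\i} operator on the annulus $\{R\le|x|\le 2R\}$, and track the $R$-dependence through the dilation-invariant gradient bound, a rescaled Poincar\'e inequality, and a rescaled Sobolev embedding so that the $L^{\infty}$ and $\dot H^{1}$ bounds on the corrector are uniform in $R$ (the paper uses the multiplicative form $\|u\|_{L^\infty}\le C\|u\|_{L^p}^{1-2/p}\|\nabla u\|_{L^p}^{2/p}$ where you use the additive embedding, which is immaterial). Your explicit justification of the mean-zero condition via testing $v_0$ against $\chi_R\in\hat W^{1,1}(\Omega)$ is a clean way to state what the paper leaves implicit.
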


\vspace{5pt}

We shall give proofs for Proposition 2.3 and Lemma 2.4 later and first complete:

\vspace{5pt}

\begin{proof}[Proof of Theorem 2.1]
Since the estimate

\begin{align*}
\sup_{0<t\leq T_0}\Big\{\|S(t)v_0\|_{L^{\infty}}
+t||A S(t)v_0||_{L^{\infty}}\Big\}
\leq C||v_0||_{L^{\infty}}  \tag{2.5}
\end{align*}\\
holds for $v_0\in L^{\infty}_{\sigma}\cap \h_{0}$ \cite{AG2}, it suffices to show 

\begin{align*}
\sup_{t>0}\Big\{ |S(t)v_0|_{ \h}
+t|A S(t)v_0|_{\h}\Big\}
\leq C||v_0||_{L^{\infty}\cap \h}.  \tag{2.6}
\end{align*}\\
We begin with $v_0\in L^{\infty}_{\sigma}\cap H^{1}_{0}$ with compact support in $\overline{\Omega}$. Since the Stokes operator $A$ is a  positive self-adjoint operator on $L^{2}$, we are able to define the fractional power $A^{1/2}$ by the spectral representation and we have 

\begin{align*}
&|u|_{\h}=||A^{\frac{1}{2}}u||_{L^{2}}\quad \textrm{for}\ u\in D(A^{\frac{1}{2}}),
\end{align*}\\
and $D(A^{1/2})=H^{1}_{0,\sigma}$ \cite[Lemma 2.2.1]{Sohr}. Since $v_0$ is supported in $\overline{\Omega}$ and vanishing on $\partial\Omega$, the initial data $v_0$ is an element of $H^{1}_{0,\sigma}$ by Proposition 2.3. It follows that 

\begin{align*}
|S(t)v_0|_{\h}=||A^{\frac{1}{2}}S(t)v_0||_{L^{2}}
&=||S(t)A^{\frac{1}{2}}v_0||_{L^{2}}\\
&\leq ||A^{\frac{1}{2}}v_0||_{L^{2}}=|v_0|_{\h}.
\end{align*}\\
Similarly, by the analyticity of the Stokes semigroup on $L^{2}$ \cite[IV 1.5]{Sohr}, we estimate 

\begin{align*}
|AS(t)v_0|_{\h}=||A^{\frac{3}{2}}S(t)v_0||_{L^{2}}
&=||AS(t)A^{\frac{1}{2}}v_0||_{L^{2}}\\
&\leq \frac{1}{t}||A^{\frac{1}{2}}v_0||_{L^{2}}
=\frac{1}{t}|v_0|_{\h}.
\end{align*}\\
Thus (2.6) holds.

For general $v_0\in L^{\infty}_{\sigma}\cap \h_{0}$, we apply Lemma 2.4 and take a sequence $\{v_{0,m}\} $ with compact support in $\overline{\Omega}$ satisfying (2.4). By applying (2.6) for $v_m=S(t)v_{0,m}$, we have  

\begin{align*}
\sup_{t>0}\Big\{ |v_m |_{ \h}
+t|A v_m|_{\h}\Big\}
\leq C||v_{0}||_{L^{\infty}\cap \h}.  
\end{align*}\\
Since $v_{0,m}\to v_{0}$ a.e. in $\Omega$, by choosing a subsequence, $v_{m}$ converges to $v=S(t)v_0$ locally uniformly in $\overline{\Omega}\times (0,\infty)$ \cite{AG2}. Thus the estimate (2.6) holds for $v_{0}\in L^{\infty}_{\sigma}\cap \h_{0}$.
\end{proof}

\vspace{10pt}

In the sequel, we prove Proposition 2.3 and Lemma 2.4 by using the Bogovski{\u\i} operator \cite{B79} (\cite[Theorem III 3.1]{Gal}). Let $L^{p}_{\textrm{av}}(D)$ denote the space of all average zero functions in $L^{p}(D)$, i.e., $\int_{D}f\dd x=0$. Let $W^{1,p}_{0}(D)$ denote the space of all functions in $W^{1,p}(D)$ vanishing on $\partial D$ for $p\in [1,\infty]$. 

\vspace{10pt}

%prop2.3
\begin{prop}
Let $D$ be a bounded domain with Lipschitz boundary in $\mathbb{R}^{n}$, $n\geq 2$. Let $p\in (1,\infty)$. There exists a bounded operator $B: L^{p}_{\textrm{av}}(D)\to W^{1,p}_{0}(D)$ such that $u=B[g]$ satisfies $\D\ u=g$ in $D$, $u=0$ on $\partial D$ and 

\begin{align*}
||\nabla u||_{L^{p}(D)}\leq C||g||_{L^{p}(D)}.    \tag{2.7}
\end{align*}\\
The constant $C=C_{D}$ is independent of dilation of $D$, i.e., $C_{D}=C_{\lambda D}$ for $\lambda>0$.
\end{prop}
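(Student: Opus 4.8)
The plan is to obtain the existence of the operator $B$ together with the estimate (2.7) from the classical Bogovski\u\i{} construction, and then to check the dilation invariance of the constant directly by a scaling argument.

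First I would recall the construction. By a partition-of-unity argument, a bounded domain $D$ with Lipschitz boundary is a finite union of subdomains, each star-shaped with respect to an open ball; it therefore suffices to treat the star-shaped case and then patch the pieces together, correcting the averages of the localized right-hand sides so that each has mean zero. This is exactly the content of \cite{B79} and \cite[Theorem III 3.1]{Gal}. For $D$ star-shaped with respect to a ball $\mathcal{B}$ with $\overline{\mathcal{B}}\subset D$, one fixes $\omega\in C^{\infty}_{c}(\mathcal{B})$ with $\int_{\mathcal{B}}\omega\,\dd x=1$ and sets
\[
B[g](x)=\int_{D}g(y)\,\frac{x-y}{|x-y|^{n}}\int_{|x-y|}^{\infty}\omega\!\left(y+r\frac{x-y}{|x-y|}\right)r^{n-1}\,\dd r\,\dd y .
\]
For $g\in C^{\infty}_{c}(D)$ with $\int_{D}g\,\dd x=0$ one verifies that $u=B[g]\in C^{\infty}_{c}(D)$ and $\D\ u=g$ in $D$, and that each $\partial_{j}u$ is a singular integral operator whose kernel is of Calder\'on--Zygmund type: a principal-value part, positively homogeneous of degree $-n$ with vanishing mean over spheres, plus a weakly singular remainder. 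The Calder\'on--Zygmund theorem then gives $\|\nabla u\|_{L^{p}(D)}\leq C\|g\|_{L^{p}(D)}$ for every $p\in(1,\infty)$, and $B$ extends by density to a bounded operator $L^{p}_{\textrm{av}}(D)\to W^{1,p}_{0}(D)$, whence $u=0$ on $\partial D$. I would cite these verifications rather than reproduce them.

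The point that requires attention here is that $C$ can be chosen independent of dilations of $D$. I argue this by scaling. Fix $\lambda>0$ and $g\in L^{p}_{\textrm{av}}(\lambda D)$, put $\tilde g(y)=g(\lambda y)$, which lies in $L^{p}_{\textrm{av}}(D)$ since $\int_{D}\tilde g\,\dd y=\lambda^{-n}\int_{\lambda D}g\,\dd x=0$, let $\tilde u=B_{D}[\tilde g]$, and define $u(x)=\lambda\,\tilde u(x/\lambda)$ on $\lambda D$. Then $\D\ u(x)=(\D\ \tilde u)(x/\lambda)=\tilde g(x/\lambda)=g(x)$, $u=0$ on $\partial(\lambda D)$, and since $\nabla u(x)=(\nabla\tilde u)(x/\lambda)$, a change of variables gives
\[
\|\nabla u\|_{L^{p}(\lambda D)}=\lambda^{n/p}\|\nabla\tilde u\|_{L^{p}(D)}\leq C_{D}\,\lambda^{n/p}\|\tilde g\|_{L^{p}(D)}=C_{D}\,\|g\|_{L^{p}(\lambda D)} .
\]
Hence, defining $B_{\lambda D}$ by $g\mapsto u$, one may take $C_{\lambda D}=C_{D}$. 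Equivalently, the kernel displayed above is invariant under the simultaneous rescaling $D\mapsto\lambda D$, $\omega\mapsto\lambda^{-n}\omega(\cdot/\lambda)$, which makes the invariance manifest.

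The only genuine difficulty is the Calder\'on--Zygmund estimate in the star-shaped case, which is the technical core of Bogovski\u\i's theorem; since it is classical and referenced, the substantive task at this point is the scaling bookkeeping, which is elementary.
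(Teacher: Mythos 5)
Your proposal is correct. The paper itself offers no proof of this proposition beyond the citation to Bogovski\u\i{} \cite{B79} and \cite[Theorem III 3.1]{Gal} in the sentence preceding the statement, so your write-up in fact supplies more than the paper does: the recollection of the star-shaped construction and Calder\'on--Zygmund estimate matches the cited sources, and your scaling computation (with $\tilde g(y)=g(\lambda y)$, $u(x)=\lambda\,\tilde u(x/\lambda)$, and the exact cancellation of the factors $\lambda^{n/p}$ and $\lambda^{-n/p}$) is the right way to justify the dilation-invariance claim $C_{\lambda D}=C_{D}$, which is the only part of the statement not literally contained in the references. No gaps.
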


\vspace{5pt}

\begin{proof}[Proof of Proposition 2.3]
We reduce the problem to the cases of a bounded domain and the whole space by a cut-off function argument. We may assume $0\in \Omega^{c}$ by translation. Let $\theta\in C_{c}^{\infty}[0,\infty)$ be a cut-off function such that $\theta\equiv 1$ in $[0,1]$, $\theta\equiv 0$ in $[2,\infty)$ and $0\leq \theta\leq 1$. We set $\theta_R(x)=\theta(|x|/R)$ so that $\theta_R\equiv 1$ in $B_{0}(R)$ and $\theta_R\equiv 0$ in $\overline{B_{0}(2R)}^{c}$. Here, $B_{0}(R)$ denotes the open ball centered at the origin with radius $R$. We fix $R\geq 1$ sufficiently large so that $\Omega^{c}\subset B_{0}(R)$. For $v_0\in H^{1}_{0}$ satisfying $\D\ v_0=0$ in $\Omega$, we shall construct a sequence $\{v_{0,m}\}\subset C_{c,\sigma}^{\infty}$ such that $v_{0,m}\to v_0$ in $H^{1}$. Let $B=B_{D_R}$ be the Bogovski{\u\i} operator on $D_R=\{x\in \Omega\ |\ R<|x|<2R \}$. We decompose $v_0$ into two terms by setting 

\begin{align*}
v_{1}&=v_{0}\theta_{R}-B[g_{R}],\\
v_{2}&=v_{0}(1-\theta_{R})+B[g_{R}],
\end{align*}\\
for $g_{R}=v_0\cdot \nabla \theta_{R}$. Since the average of $g_{R}$ is zero in $D_{R}$ by $\D\ v_0=0$ in $\Omega$ and $v_{0}=0$ on $\partial \Omega$, $v_1$ and $v_2$ are defined by the Bogovski{\u\i} operator on $D_{R}$. Observe that $v_1\in H^{1}_{0}(\Omega_1)$ satisfies $\D\ v_1=0$ in $\Omega_1=\Omega\cap \{|x|<2R\}$, and $v_2\in H^{1}(\mathbb{R}^{n})$ satisfies $\D\ v_2=0$ in $\mathbb{R}^{n}$, $\textrm{spt}\ v_2\subset \{|x|\geq R\}$. Since the assertion of Proposition 2.3 is valid for bounded domains and the whole space \cite[II. 2.2.3 Lemma and II. 2.5.5 Lemma]{Sohr}, there exist sequences $\{v_{1,m}\}\subset C_{c,\sigma}^{\infty}(\Omega_1)$ and $\{v_{2,m}\}\subset C_{c,\sigma}^{\infty}(\mathbb{R}^{n})$ such that 

\begin{align*}
v_{1,m}&\to v_{1}\quad \textrm{in}\ H^{1}_{0}(\Omega_{1}),\\
v_{2,m}&\to v_{2}\quad \textrm{in}\ H^{1}(\mathbb{R}^{n}).
\end{align*}\\
We identify $v_{1,m}$ as an element of $C_{c,\sigma}^{\infty}(\Omega)$ by the zero extension to $\mathbb{R}^{n}\backslash \overline{\Omega_1}$. Since $v_{2}$ is supported in $\{|x|\geq R\}$, we may assume that $v_{2,m}$ is supported in $\Omega$. Since $v_{0}=v_{1}+v_{2}$, we obtain the desired sequence $v_{0,m}=v_{1,m}+v_{2,m}$ in $C_{c,\sigma}^{\infty}(\Omega)$. 
\end{proof}

\vspace{5pt}

It remains to show Lemma 2.4. Since the $\h$-semi-norm is invariant under the scaling $f_{\lambda}(x)= f(\lambda x)$, $\lambda>0$, for the two-dimensional space, the approximation (2.4) holds with the $L^{\infty}\cap \h$-norm (see Remarks 2.6 (iv) for $n\geq 3$).

\vspace{5pt}

\begin{proof}[Proof of Lemma 2.4]
Let $\theta$ be a cut-off function used in the proof of Proposition 2.3. We set $\theta_m(x)=\theta(|x|/m)$ so that $\theta_m\equiv 1$ in $B_{0}(m)$ and $\theta_m\equiv 0$ in $\overline{B_{0}(2m)}^{c}$, and take $m\geq 1$ sufficiently large so that $\Omega^{c}\subset B_{0}(m)$. For $v_0\in L^{\infty}_{\sigma}\cap \h_{0}$, we set 

\begin{align*}
v_{0,m}&=v_{0}\theta_m-u_m,\\
u_m&=B_{D_m}[g_m]\quad \textrm{for}\ g_m=v_0\cdot \nabla \theta_m,
\end{align*}\\
by the Bogovski{\u\i} operator $B_{D_m}$ on $D_m=\{x\in \Omega\ |\ m<|x|<2m \}$. We identify $u_{m}$ and its zero extension to $\mathbb{R}^{2}\backslash \overline{D_m}$. We observe that $v_{0,m}$ is with compact support in $\overline{\Omega}$ and satisfies $\D\ v_{0,m}=0$ in $\Omega$, $v_{0,m}=0$ on $\partial\Omega$. Since $v_{0,m}\to v_{0}$ a.e. in $\Omega$ as $m\to\infty$, it suffices to show 

\begin{align*}
||v_{0,m}||_{L^{\infty}\cap \dot{H}^{1}}\leq C||v_{0}||_{L^{\infty}\cap \dot{H}^{1}}.
\end{align*}\\
We observe that 

\begin{align*}
||v_0\theta_m||_{L^{\infty}}&\leq ||v_0||_{L^{\infty}},\\
||\nabla (v_0\theta_m)||_{L^{2}}&\leq ||\nabla v_0||_{L^{2}}+||v_0||_{L^{\infty}}||\nabla \theta_m||_{L^{2}}.\\
\end{align*}\\
Since the $\h$-semi-norm is invariant for $\theta_{m}(x)= \theta(|x|/m)$, i.e., 

\begin{align*}
||\nabla \theta_m||_{L^{2}(\mathbb{R}^{2})}=||\nabla \theta||_{L^{2}(\mathbb{R}^{2})},
\end{align*}\\
we have 

\begin{align*}
||v_0\theta_m||_{L^{\infty}\cap \h}\leq C ||v_0||_{L^{\infty}\cap \h},
\end{align*}\\
with some constant $C$, independent of $m\geq 1$. We shall show 

\begin{align*}
||u_m||_{L^{\infty}(\Omega)}+||\nabla u_m||_{L^{2}(\Omega)}
\leq C||v_0||_{L^{\infty}(\Omega)}.   \tag{2.8}
\end{align*}\\
The desired estimate for $v_{0,m}$ follows from (2.8). We estimate the cut-off function $\theta_m$ and observe that 

\begin{align*}
||g_m||_{L^{p}(D_m)}\leq \frac{C_1}{m^{1-\frac{2}{p}}}||v_0||_{L^{\infty}(\Omega)}
\end{align*}\\
holds for $p\in [1, \infty]$. We apply (2.7) to estimate  

\begin{align*}
||\nabla u_m||_{L^{p}(D_m)}\leq C_2||g_m||_{L^{p}(D_m)}.  \tag{2.9}
\end{align*}\\
Since the constant in (2.7) is invariant under the dilation, the constant $C_{2}$ is independent of $m$. We take $p=2$ and obtain a uniform estimate for $\nabla u_m$ in $L^{2}$. It remains to show the $L^{\infty}$-estimate for $u_m$. Since $u_m$ vanishes on $\partial D_m$, we apply the Poincar\'e inequality \cite{E} to estimate 

\begin{align*}
||u_m||_{L^{p}(D_m)}\leq mC_3||\nabla u_m||_{L^{p}(D_m)}.
\end{align*}\\
with the constant $C_3$, independent of $m$. The above estimate is obtained by applying the inequality in $D_{1}$ and rescaling. We apply the Sobolev inequality \cite[Lemma 3.1.4]{Lunardi} for $p>2$ to estimate 

\begin{align*}
||u_m||_{L^{\infty}(D_m)}
&\leq C_{4}||u_m||_{L^{p}(D_m)}^{1-\frac{2}{p}}||\nabla u_m||_{L^{p}(D_m)}^{\frac{2}{p}}\\
&\leq C_{4}{(mC_{3})}^{1-\frac{2}{p}}||\nabla u_m||_{L^{p}(D_m)}\\
&\leq C_{4} {C_{3}}^{1-\frac{2}{p}}C_2C_1||v_0||_{L^{\infty}(\Omega)}.
\end{align*}\\
Since the constants $C_1-C_4$ are independent of $m$, we obtain the estimate (2.8). The proof is now complete.
\end{proof}

\vspace{10pt}

%Remarks2.6
\begin{rems}

\noindent
(i) When $\Omega$ is the whole space, the heat semigroup forms a bounded analytic semigroup on $L^{\infty}\cap \dot{W}^{1,n}$, where $\dot{W}^{1,n}$ denotes the homogeneous $L^{n}$-Sobolev space. We set 

\begin{align*}
&|f|_{\dot{W}^{1,n}}=||\nabla f||_{L^{n}},\\
&||f||_{L^{\infty}\cap \dot{W}^{1,n}}=||f||_{L^{\infty}}+|f|_{\dot{W}^{1,n}}.
\end{align*}\\
The space $L^{\infty}\cap \dot{W}^{1,n}$ is a Banach space equipped with the norm $||\cdot ||_{L^{\infty}\cap \dot{W}^{1,n}}$. Since spatial derivatives commute with the semigroup, we estimate 

\begin{align*}
&|e^{t\Delta}v_0|_{\dot{W}^{1,n}}
=|| e^{t\Delta}\nabla v_0||_{L^{n}}
\leq C||\nabla v_0||_{L^{n}},\\
&|\Delta e^{t\Delta}v_0|_{\dot{W}^{1,n}}
=||\Delta e^{t\Delta}\nabla v_0||_{L^{n}}
\leq \frac{C'}{t}||\nabla v_0||_{L^{n}},\quad t>0.
\end{align*}\\
Since the heat semigroup is a $C_0$-semigroup on $L^{n}$, it is strongly continuous for $v_0\in L^{\infty}\cap \dot{W}^{1,n}$, i.e., $\nabla e^{t\Delta}v_0\to \nabla v_0$ in $L^{n}$ as $t\to0$.

\noindent 
(ii) When $\Omega$ is a half space, the Stokes semigroup is a bounded analytic semigroup on $L^{\infty}_{\sigma}\cap \dot{W}^{1,n}_{0}$, where $\dot{W}^{1,n}_{0}$ denotes the space of all functions in $\dot{W}^{1,n}$, vanishing on the boundary. We estimate the $\dot{W}^{1,n}$-semi-norm by the estimates of the fractional power 
 
\begin{align*}
||\nabla u||_{L^{n}}&\leq C_1||A^{\frac{1}{2}}u||_{L^{n}},   \tag{2.10}\\
||A^{\frac{1}{2}}u||_{L^{n}}&\leq C_2||\nabla u||_{L^{n}}, \tag{2.11}
\end{align*} \\
for $u\in D(A^{1/2})$ and $D(A^{1/2})=W^{1,n}_{0,\sigma}$, where $W^{1,n}_{0,\sigma}$ denotes the $W^{1,n}$-closure of $C_{c,\sigma}^{\infty}$. The estimate (2.10) is proved in \cite[Theorem 3.6 (ii)]{BM88} for general $L^{p}$-norms. The estimate (2.11) follows from a duality; see Appendix B. Applying (2.10) and (2.11) together with the analyticity of $S(t)$ on $L^{n}$ implies that 

\begin{align*}
|S(t)v_0|_{\dot{W}^{1,n}}
=||\nabla S(t)v_0||_{L^{n}}
&\leq C_1||A^{\frac{1}{2}}S(t)v_0||_{L^{n}}\\
&\leq C_1'||A^{\frac{1}{2}}v_0||_{L^{n}}\\
&\leq C||\nabla v_0||_{L^{n}},\\
|AS(t)v_0|_{\dot{W}^{1,n}}
&\leq \frac{C'}{t}||\nabla v_0||_{L^{n}},\quad t>0,
\end{align*}\\
for $v_0\in D(A^{1/2})$. Since we are able to approximate $v_0\in L^{\infty}_{\sigma}\cap \dot{W}^{1,n}_{0}$ by elements of $D(A^{1/2})$ by a pointwise convergence as remarked below in (iv), we have 

\begin{align*}
|S(t)v_0|_{\dot{W}^{1,n}}+t|AS(t)v_0|_{\dot{W}^{1,n}}
\leq C||v_0||_{L^{\infty}\cap \dot{W}^{1,n}},\quad t>0
\end{align*}\\
for $v_0\in L^{\infty}_{\sigma}\cap \dot{W}^{1,n}_{0}$. Since the Stokes semigroup is a bounded analytic semigroup on $L^{\infty}_{\sigma}$ \cite{DHP}, \cite{Sl03}, it forms a bounded analytic semigroup on $L^{\infty}_{\sigma}\cap \dot{W}^{1,n}_{0}$ for $n\geq 2$. 

\noindent 
(iii) When $\Omega$ is bounded, $S(t)$ is a bounded analytic semigroup on $L^{\infty}_{\sigma}$ \cite{AG1}. Moreover, the estimates (2.10) and (2.11) hold (see Remark B.2). Thus $S(t)$ is regarded as a bounded analytic semigroup on $L^{\infty}_{\sigma}\cap \dot{W}^{1,n}_{0}$. When $\Omega$ is an exterior domain, the estimate (2.10) does not hold for $n\geq 3$ \cite[Theorem 1.1 (ii)]{BM1992} (see also \cite[Theorem B]{GS}, \cite[Theorem 4.4]{BM}.) When $n=2$, $L^{2}$-theory is available and we obtained (2.1).

\noindent 
(iv) The assertion of Lemma 2.4 is extendable for higher dimensional cases for $n\geq 3$. Since the $\dot{W}^{1,n}$-semi-norm is invariant under the scaling $f_{\lambda}(x)=f(\lambda x)$ for $\lambda>0, x\in \mathbb{R}^{n}$, by the same way as we proved Lemma 2.4, for $v_0\in L^{\infty}_{\sigma}\cap \dot{W}^{1,n}_{0}$ we are able to construct a sequence $\{v_{0,m}\}\subset L^{\infty}_{\sigma}\cap {W}^{1,n}_{0}$ with compact support in $\overline{\Omega}$ satisfying 

\begin{equation*}
\begin{aligned}
&||v_{0,m}||_{L^{\infty}\cap \dot{W}^{1,n}}\leq C ||v_{0}||_{L^{\infty}\cap \dot{W}^{1,n}},\\
&v_{0,m}\to v_{0}\quad \textrm{a.e.}\ \textrm{in}\ \Omega,
\end{aligned}
\tag{2.12}
\end{equation*}\\
with some constant $C$. By a similar cut-off function argument, we are able to construct a sequence satisfying (2.12) also for a half space $\Omega=\mathbb{R}^{n}_{+}$, $n\geq 2$. 
\end{rems}

\vspace{10pt}

%セクション3
\section{Local solvability of the perturbed system}

\vspace{10pt}

In this section, we prove local solvability of the perturbed system (1.5) for non-decaying initial data in $L^{\infty}\cap \h$ (Theorem 3.1). The local solvability on $L^{\infty}\cap \h$ is used in the next section in order to construct global solutions for $w_0=0$. We apply regularizing estimates of the Stokes semigroup proved in the previous section and establish the solvability on $L^{\infty}\cap \dot{H}^{1}$ by an iterative argument.\\

\vspace{5pt}

We consider the perturbed system of the form

\begin{equation*}
\begin{aligned}
\partial_t w-\Delta{w}+w\cdot \nabla w+v\cdot \nabla w+\nabla{\pi}&= -w\cdot \nabla v-v\cdot \nabla v \quad \textrm{in}\quad \Omega\times(0,T_0),    \\
\D\ w&=0\quad\qquad\qquad\qquad \textrm{in}\quad \Omega\times(0,T_0),    \\
w&=0\qquad\qquad\qquad\quad \textrm{on}\quad \partial\Omega\times (0,T_0) ,  \\
w&=w_0\hspace{70pt}\textrm{on}\quad  \Omega\times \{t=0\}.  
\end{aligned}
\tag{3.1}
\end{equation*}\\
We assume that the perturbation $v$ is a solenoidal vector field in $\Omega$, vanishing on the boundary, i.e., $\D\ v=0$ in $\Omega$ and $v=0$ on $\partial\Omega$, and satisfies 
 
\begin{align*}
v,\ t^{1/2}\nabla v\in C_{w}([0,T_0]; L^{\infty}),\quad \nabla v\in C_{w}([0,T_0]; L^{2}).   \tag{3.2}
\end{align*}\\
Our goal is to prove local existence of mild solutions of the form

\begin{align*}
w(t)=S(t)w_0-\int_{0}^{t}S(t-s)\mathbb{P}(\tilde{w}\cdot \nabla \tilde{w})\dd s,\quad \tilde{w}=w+v.      
\end{align*}

\vspace{5pt}

%thm 3.1
\begin{thm}
Let $v$ be a solenoidal vector field in $\Omega$ satisfying (3.2). Set

\begin{align*}
N=\sup_{0< t\leq T_0}\Big\{||v||_{L^{\infty}\cap \h}(t)+t^{1/2}||\nabla v||_{L^{\infty}}(t)  \Big\}.
\end{align*}\\
There exists a constant $\varepsilon$ such that for $w_0\in L^{\infty}_{\sigma}\cap \H$, there exists $T\geq \varepsilon K^{-2}$ for $K=||w_0||_{L^{\infty}\cap \h}+N$, and a unique mild solution $w\in C_{w}([0,T]; L^{\infty})$ of (3.1) satisfying $t^{1/2}\nabla w\in C_{w}([0,T]; L^{\infty})$ and $\nabla w\in C_{w}([0,T]; L^{2})$.
\end{thm}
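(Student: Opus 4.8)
The plan is to construct the mild solution $w$ by the standard Banach fixed point argument in a complete metric space tailored to the regularity in (3.2). First I would introduce the solution space
\begin{align*}
X_T=\Big\{w\in C_w([0,T];L^\infty)\ \Big|\ t^{1/2}\nabla w\in C_w([0,T];L^\infty),\ \nabla w\in C_w([0,T];L^2)\Big\},
\end{align*}
equipped with the norm
\begin{align*}
\|w\|_{X_T}=\sup_{0<t\le T}\Big\{\|w\|_{L^\infty}(t)+\|\nabla w\|_{L^2}(t)+t^{1/2}\|\nabla w\|_{L^\infty}(t)\Big\},
\end{align*}
and work in the closed ball $B_M=\{w\in X_T:\|w\|_{X_T}\le M\}$ with $w(0)=w_0$. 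Writing $\tilde w=w+v$, the nonlinearity in (3.1) is $F(w)=w\cdot\nabla w+v\cdot\nabla w+w\cdot\nabla v+v\cdot\nabla v=\tilde w\cdot\nabla\tilde w$, so the map is
\begin{align*}
\Phi[w](t)=S(t)w_0-\int_0^t S(t-s)\mathbb P\big(\tilde w\cdot\nabla\tilde w\big)(s)\,\dd s.
\end{align*}
The linear term $S(t)w_0$ is controlled in $X_T$ by Theorem 2.1 and Corollary 2.2: it has $X_T$-norm bounded by $C\|w_0\|_{L^\infty\cap\h}$ uniformly in $T\le T_0$, and it lies in the required weakly continuous classes by (2.3).

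Next I would estimate the Duhamel term. The key inputs are the regularizing estimates of the Stokes semigroup: on $L^\infty_\sigma$ one has $\|\nabla S(t)\mathbb P f\|_{L^\infty}\le C t^{-1/2}\|f\|_{L^\infty}$ (and $\|S(t)\mathbb P f\|_{L^\infty}\le C\|f\|_{L^\infty}$), while on $L^2$ the Stokes semigroup satisfies $\|\nabla S(t)\mathbb P f\|_{L^2}\le C t^{-1/2}\|f\|_{L^2}$, and the $L^\infty\cap\h$-estimate (2.1) handles the mixed norm. One writes $\tilde w\cdot\nabla\tilde w$ in the form $\D(\tilde w\otimes\tilde w)$ when convenient, but since we have $\nabla v$ and $\nabla w$ directly, it is cleanest to keep it as a product and bound, for $w\in B_M$ (so $\tilde w$ has $L^\infty$-norm $\le M+N$, $\nabla\tilde w$ has $L^2$-norm $\le M+N$, and $t^{1/2}\nabla\tilde w$ has $L^\infty$-norm $\le M+N$):
\begin{align*}
\|\tilde w\cdot\nabla\tilde w\|_{L^2}(s)\le \|\tilde w\|_{L^\infty}(s)\,\|\nabla\tilde w\|_{L^2}(s)\le (M+N)^2,\qquad
\|\tilde w\cdot\nabla\tilde w\|_{L^\infty}(s)\le \frac{(M+N)^2}{s^{1/2}}.
\end{align*}
Then for the three components of the $X_T$-norm one gets, schematically,
\begin{align*}
\Big\|\int_0^t S(t-s)\mathbb P(\tilde w\cdot\nabla\tilde w)\,\dd s\Big\|_{L^\infty}
&\le C(M+N)^2\int_0^t s^{-1/2}\,\dd s= C(M+N)^2\,t^{1/2},\\
\Big\|\nabla\int_0^t S(t-s)\mathbb P(\tilde w\cdot\nabla\tilde w)\,\dd s\Big\|_{L^2}
&\le C(M+N)^2\int_0^t (t-s)^{-1/2}\,\dd s= C(M+N)^2\,t^{1/2},\\
t^{1/2}\Big\|\nabla\int_0^t S(t-s)\mathbb P(\tilde w\cdot\nabla\tilde w)\,\dd s\Big\|_{L^\infty}
&\le C(M+N)^2\,t^{1/2}\int_0^t (t-s)^{-1/2}s^{-1/2}\,\dd s= C(M+N)^2\,t^{1/2},
\end{align*}
using the Beta integral in the last line; the middle estimate uses $L^\infty\to L^2$ boundedness of $S(t)\mathbb P$ on the relevant pieces or, alternatively, the $L^2$-theory after noting $\tilde w\cdot\nabla\tilde w\in L^2$. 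Choosing $M=2C_0\|w_0\|_{L^\infty\cap\h}+2C_0 N$ (where $C_0$ bounds the linear term) and then $T$ small enough that $C(M+N)^2 T^{1/2}\le M/2$ gives $\Phi:B_M\to B_M$; the same bilinear structure gives $\|\Phi[w_1]-\Phi[w_2]\|_{X_T}\le C(M+N)T^{1/2}\|w_1-w_2\|_{X_T}$, a contraction for $T$ possibly smaller. Tracking constants, the smallness condition reads $T^{1/2}\lesssim 1/(M+N)\lesssim 1/K$ with $K=\|w_0\|_{L^\infty\cap\h}+N$, i.e.\ $T\ge \varepsilon K^{-2}$ for a universal $\varepsilon>0$, which is exactly the claimed lower bound on the existence time.

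Finally I would verify the regularity and the weak-continuity claims. Membership of $w$ in $C_w([0,T];L^\infty)$ follows since $S(t)w_0$ is weakly-star continuous by (2.3) and the Duhamel integral is in fact (strongly) continuous into $L^\infty$ away from $t=0$ and tends to $0$ as $t\to0^+$ by the $t^{1/2}$ gain; the weak-star continuity at $t=0$ then reduces to that of $S(t)w_0$. The statements $t^{1/2}\nabla w\in C_w([0,T];L^\infty)$ and $\nabla w\in C_w([0,T];L^2)$ are obtained the same way, combining (2.3) for the linear part with continuity of the Duhamel integral into those spaces, the value at $t=0$ being $0$. Uniqueness in the class follows from the contraction estimate applied on a possibly shorter interval and a continuation/connectedness argument: if $w_1,w_2$ are two solutions, $\|w_1-w_2\|_{X_{T'}}\le \tfrac12\|w_1-w_2\|_{X_{T'}}$ for $T'$ small forces agreement on $[0,T']$, and iterating covers $[0,T]$. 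The main obstacle is not any single estimate but the bookkeeping of the time-weighted norms so that all three pieces of $\|\cdot\|_{X_T}$ close simultaneously with a uniform constant — in particular making sure the mixed estimate $t^{1/2}\|\nabla(\cdot)\|_{L^\infty}$ of the Duhamel term is controlled, which is where the Beta-function integral $\int_0^t(t-s)^{-1/2}s^{-1/2}\dd s=\pi$ and the $L^\infty$-smoothing $\|\nabla S(t)\mathbb P\|_{L^\infty\to L^\infty}\le Ct^{-1/2}$ from \cite{AG2} are essential, together with the $L^\infty\cap\h$-analyticity (2.1) proved in the previous section, since on an exterior domain $S(t)\mathbb P$ does not enjoy the full range of $L^p$-gradient bounds one has on $\mathbb R^2$.
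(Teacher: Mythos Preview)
There is a genuine gap in the $L^{\infty}$ estimates of the Duhamel term. You invoke the bounds $\|S(t)\mathbb{P}f\|_{L^{\infty}}\le C\|f\|_{L^{\infty}}$ and $\|\nabla S(t)\mathbb{P}f\|_{L^{\infty}}\le Ct^{-1/2}\|f\|_{L^{\infty}}$, but these are not available for a general $f\in L^{\infty}$: the Helmholtz projection $\mathbb{P}$ is \emph{unbounded} on $L^{\infty}$, so $\mathbb{P}(\tilde w\cdot\nabla\tilde w)$ need not lie in $L^{\infty}_{\sigma}$, and the $L^{\infty}$-analyticity results of \cite{AG2} apply only to solenoidal data. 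This is precisely why the paper does \emph{not} keep the nonlinearity as a product but writes $\tilde w\cdot\nabla\tilde w=\D(\tilde w\otimes\tilde w)$ and uses the composite-operator estimate (3.5),
\[
\|\partial_x^{k}S(t)\mathbb{P}\,\D\,F\|_{L^{\infty}}\le \frac{C}{t^{(|k|+1-\alpha)/2}}\,\|F\|_{L^{\infty}}^{1-\alpha}\|F\|_{W^{1,\infty}}^{\alpha},\qquad \alpha\in(0,1),
\]
which bounds $S(t)\mathbb{P}\D$ directly and bypasses the unboundedness of $\mathbb{P}$ on $L^{\infty}$. The divergence form is therefore essential, not merely ``convenient.''

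Your first display (the $L^{\infty}$ bound on $\int_0^t S(t-s)\mathbb{P}f\,\dd s$) can in fact be salvaged via the $L^{2}$ route, using (3.6): $\|S(t)\mathbb{P}f\|_{L^{\infty}}\le Ct^{-1/2}\|f\|_{L^{2}}$ together with $\|\tilde w\cdot\nabla\tilde w\|_{L^{2}}\le(M+N)^{2}$. But the third display --- the $t^{1/2}\|\nabla(\cdot)\|_{L^{\infty}}$ piece --- cannot be closed this way: the only $L^{2}\to L^{\infty}$ gradient bound available is $\|\nabla S(t)\mathbb{P}f\|_{L^{\infty}}\le Ct^{-1}\|f\|_{L^{2}}$ (cf.\ (4.3) with $|k|=1$, $p=\infty$), and $\int_0^t(t-s)^{-1}\dd s$ diverges. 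The paper's remedy is exactly (3.5) with $|k|=1$: plugging $\|\tilde F\|_{L^{\infty}}\le\tilde K^{2}$, $\|\nabla\tilde F\|_{L^{\infty}}\le 2s^{-1/2}\tilde K^{2}$ produces the integrable kernel $(t-s)^{-(2-\alpha)/2}s^{-\alpha/2}$, and the Beta integral closes. Once you replace your $L^{\infty}\to L^{\infty}$ claims by (3.5), the rest of your fixed-point scheme is equivalent to the paper's Picard iteration.
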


\vspace{10pt}

In order to prove Theorem 3.1, we recall $L^{\infty}$-estimates of the Stokes semigroup.

\vspace{10pt}

%prop 3.2
\begin{prop}
Let $T_0>0$ and $\alpha\in (0,1)$. There exist constants $C_1-C_4$ such that the estimates \\
\begin{align*}
||S(t)w_0||_{L^{\infty}\cap \h}
+t^{\frac{1}{2}}||\nabla S(t)w_0||_{L^{\infty}}
&\leq C_1||w_0||_{L^{\infty}\cap \h},   \tag{3.3}\\
t^{\frac{|k|}{2}}||\partial_x^{k}S(t)\mathbb{P} f||_{L^{2}}&\leq C_2 ||f||_{L^{2}}, \tag{3.4}\\
t^{\frac{|k|+1-\alpha}{2}}||\partial_x^{k}S(t)\mathbb{P} \D\ F||_{L^{\infty}}
&\leq C_3||F||_{L^{\infty}}^{1-\alpha}||F||_{W^{1,\infty}}^{\alpha},          \tag{3.5}\\
t^{\frac{1}{2}}||S(t)\mathbb{P} f||_{L^{\infty}\cap \dot{H}^{1}}&\leq C_4 ||f||_{L^{2}},    \tag{3.6}
\end{align*}\\
hold for $w_0\in L^{\infty}_{\sigma}\cap \H$, $f\in L^{2}$, $F\in W^{1,\infty}_{0}$ satisfying $\D\ F\in L^{2}$, $|k|\leq 2$, and $0<t\leq T_0$.
\end{prop}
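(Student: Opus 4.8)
The plan is to read off (3.3)--(3.6) from the $L^{\infty}$-theory of the Stokes semigroup established in Section~2 and recalled from \cite{AG2}, combined with $L^{2}$-analyticity of $S(t)$ and elliptic regularity for the stationary Stokes system, both available since $\partial\Omega$ is of class $C^{3}$.

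\textbf{The inequalities (3.3), (3.4), (3.6).} Estimate (3.3) is precisely (2.2) of Corollary~2.2, so nothing is needed beyond invoking it. For (3.4): the case $|k|=0$ is uniform boundedness of $S(t)$ on $L^{2}_{\sigma}$; for $|k|=1$ one writes $\|\nabla S(t)\mathbb{P}f\|_{L^{2}}=\|A^{1/2}S(t)\mathbb{P}f\|_{L^{2}}\le Ct^{-1/2}\|f\|_{L^{2}}$ by analyticity of $S(t)$ on $L^{2}$; for $|k|=2$ one combines the stationary Stokes estimate $\|u\|_{H^{2}(\Omega)}\le C(\|Au\|_{L^{2}}+\|u\|_{L^{2}})$, valid for $u\in D(A)$ on a $C^{2}$ exterior domain, with $\|AS(t)\mathbb{P}f\|_{L^{2}}\le Ct^{-1}\|f\|_{L^{2}}$, the lower-order contribution being harmless because $t\le T_{0}$. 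For (3.6), the $\h$-part is again $\|A^{1/2}S(t)\mathbb{P}f\|_{L^{2}}\le Ct^{-1/2}\|f\|_{L^{2}}$, while for the $L^{\infty}$-part one applies the two-dimensional Gagliardo--Nirenberg inequality $\|g\|_{L^{\infty}(\Omega)}\le C\|g\|_{H^{2}(\Omega)}^{1/2}\|g\|_{L^{2}(\Omega)}^{1/2}$ (obtained via a bounded extension of $g$ to $\mathbb{R}^{2}$) to $g=S(t)\mathbb{P}f$ and inserts the bounds just obtained; multiplying by $t^{1/2}$ and using $t\le T_{0}$ gives (3.6). I expect all of this to be routine.

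\textbf{The inequality (3.5).} This is the substantive point, and I would obtain it by interpolating, as a pointwise product, two composite bounds for the operator $\partial_{x}^{k}S(t)\mathbb{P}\,\D$, $|k|\le 2$. The first, corresponding to $\alpha\to0$, is the composite $L^{\infty}$-smoothing estimate $\|\partial_{x}^{k}S(t)\mathbb{P}\,\D F\|_{L^{\infty}}\le Ct^{-(|k|+1)/2}\|F\|_{L^{\infty}}$ for $F\in L^{\infty}$ vanishing on $\partial\Omega$, which is part of the bounded-function Stokes theory of \cite{AG2}; for $|k|\ge 1$ one splits $S(t)=S(t/2)S(t/2)$ and applies the $L^{\infty}$-gradient bounds of \cite{AG2} to the outer factor. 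The second, corresponding to $\alpha\to1$, is $\|\partial_{x}^{k}S(t)\mathbb{P}\,\D F\|_{L^{\infty}}\le Ct^{-|k|/2}\|F\|_{W^{1,\infty}}$ for $F\in W^{1,\infty}_{0}$ with $\D F\in L^{2}$ (so that $\mathbb{P}\,\D F$ is well defined in $L^{2}_{\sigma}$); here one exploits that $\D F\in L^{\infty}$ with $\|\D F\|_{L^{\infty}}\le C\|F\|_{W^{1,\infty}}$ together with the vanishing of $F$ on $\partial\Omega$ and the $L^{\infty}$-smoothing of $S(t)$ to control $S(t)\mathbb{P}\,\D F$ in $L^{\infty}$ uniformly in $t$. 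Raising these two bounds to the powers $1-\alpha$ and $\alpha$ and multiplying produces exactly the exponent $\tfrac{|k|+1-\alpha}{2}$ and the factor $\|F\|_{L^{\infty}}^{1-\alpha}\|F\|_{W^{1,\infty}}^{\alpha}$ of (3.5).

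\textbf{The main obstacle} is the $\alpha\to1$ endpoint of (3.5). Since the Helmholtz projection is unbounded on $L^{\infty}$, the datum $\mathbb{P}\,\D F$ need not lie in $L^{\infty}$ even when $F\in W^{1,\infty}_{0}$, so the bound $\|S(t)\mathbb{P}\,\D F\|_{L^{\infty}}\le C\|F\|_{W^{1,\infty}}$ cannot be deduced by mere boundedness of $S(t)$ on $L^{\infty}_{\sigma}$; it requires genuinely using the divergence structure together with the no-slip condition $F|_{\partial\Omega}=0$ and the full strength of the $L^{\infty}$-Stokes estimates recalled from \cite{AG2} (see also \cite{A4}, \cite{A5}). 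Once this endpoint is in place, the remaining ingredients are only $L^{2}$-analyticity and stationary Stokes regularity, which hold on the $C^{3}$ exterior domains under consideration.
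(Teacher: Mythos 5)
Your treatment of (3.3), (3.4) and (3.6) is correct and essentially what the paper does: (3.3) is quoted from Corollary 2.2, (3.4) follows from $L^{2}$-analyticity of $S(t)$ (with elliptic regularity supplying $|k|=2$), and for (3.6) the paper runs the same interpolation in the form $\|v\|_{L^{\infty}}\le C\|v\|_{L^{4}}^{1/2}\|\nabla v\|_{L^{4}}^{1/2}$ followed by $\|\varphi\|_{L^{4}}\le C\|\varphi\|_{L^{2}}^{1/2}\|\varphi\|_{H^{1}}^{1/2}$, which is equivalent to the Agmon-type inequality you invoke.

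The genuine gap is in (3.5). The paper does not derive (3.5) by interpolating two endpoint bounds; it cites \cite{A3} (Theorem 1.1 there proves (3.5) for $F\in C_{c}^{\infty}$ by a blow-up/compactness argument for the composite operator $S(t)\mathbb{P}\,\textrm{div}$) and \cite{A5} (Theorem 2.2 there extends $S(t)\mathbb{P}\,\textrm{div}$ to a bounded operator from $W^{1,\infty}_{0}$ to $L^{\infty}_{\sigma}$ carrying the same estimate). Your plan needs the two endpoint bounds $\|\partial_{x}^{k}S(t)\mathbb{P}\,\textrm{div}\,F\|_{L^{\infty}}\le Ct^{-(|k|+1)/2}\|F\|_{L^{\infty}}$ and $\|\partial_{x}^{k}S(t)\mathbb{P}\,\textrm{div}\,F\|_{L^{\infty}}\le Ct^{-|k|/2}\|F\|_{W^{1,\infty}}$, and neither is available. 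In particular the first is not ``part of the bounded-function Stokes theory of \cite{AG2}'': that paper gives analyticity of $S(t)$ on $L^{\infty}_{\sigma}$, i.e.\ estimates for $S(t)v_{0}$ with $v_{0}\in L^{\infty}_{\sigma}$, whereas $\mathbb{P}\,\textrm{div}\,F$ is not an $L^{\infty}_{\sigma}$ datum (the Helmholtz projection is unbounded on $L^{\infty}$), so nothing there applies to the composition. The very reason (3.5) is stated with $\alpha\in(0,1)$ strictly and with the interpolated right-hand side is that the $\alpha=0$ endpoint is not known for exterior domains: the blow-up argument of \cite{A3} requires the gain coming from $\alpha>0$ to obtain compactness. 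Likewise the $\alpha=1$ endpoint, which you correctly flag as an obstacle, is not resolved by your sketch; saying it ``requires genuinely using the divergence structure'' restates the problem rather than solving it. As written, (3.5) is therefore not established; the correct move is to quote \cite{A3} and \cite{A5} (or reproduce their argument), not to interpolate between two unproven endpoints.
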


\vspace{5pt}

\begin{proof}
The estimate (3.3) follows from Corollary 2.2. Since the Stokes semigroup $S(t)$ is an analytic semigroup on $L^{2}_{\sigma}$, the estimate (3.4) holds \cite[IV.1.5]{Sohr}. The estimate (3.5) is proved in \cite[Theorem 1.1]{A3} for $F\in C_{c}^{\infty}$. It is proved in \cite[Theorem 2.2]{A5} that the composition operator $S(t)\p\D$ is uniquely extendable to a bounded operator $\overline{S(t)\p\D}$ from $W^{1,\infty}_{0}$ to $L^{\infty}_{\sigma}$ together with (3.5). Since $\overline{S(t)\p\D} F$ agrees with $S(t)\p\D F $ for $F\in W^{1,\infty}_{0}$ satisfying $\D\ F\in L^{2}$, we have (3.5).

It remains to show (3.6). Since $t^{1/2}||\nabla S(t)\p f||_{2}\leq C||f||_{2}$ by (3.4), it suffices to show that 

\begin{align*}
||S(t)\p f||_{L^{\infty}}\leq \frac{C}{t^{\frac{1}{2}}}||f||_{L^{2}}.  \tag{3.7}
\end{align*}\\
We estimate $v=S(t)\p f$ by interpolation. By the Sobolev inequality \cite[Lemma 3.1.4]{Lunardi}, we have

\begin{align*}
||v||_{L^{\infty}}\leq C||v||_{L^{4}}^{\frac{1}{2}}||\nabla v||_{L^{4}}^{\frac{1}{2}}.  
\end{align*}\\
We estimate the $L^{4}$-norm by the $H^{1}$-norm. We invoke the Sobolev inequality

\begin{align*}
||\varphi||_{L^{4}}\leq C||\varphi||_{L^{2}}^{\frac{1}{2}}|| \varphi||_{H^{1}}^{\frac{1}{2}} \tag{3.8}
\end{align*}\\
for $\varphi\in H^{1}(\Omega)$. The estimate (3.8) is known for $\Omega=\mathbb{R}^{2}$ (e.g., \cite[Chapter 1, Lemma 2]{Lady61}) and extendable for the exterior domain $\Omega\subset \mathbb{R}^{2}$ by a suitable extension of $\varphi\in H^{1}(\Omega)$ to $\mathbb{R}^{2}\backslash \overline{\Omega}$, . We apply (3.8) for $\varphi=v$ and $\nabla v$. By $L^{2}$-estimates of the Stokes semigroup (3.4) for $|k|\leq 2$, we obtain (3.7). The proof is complete.
\end{proof}

\vspace{5pt}

\begin{proof}[Proof of Theorem 3.1]
We set a sequence $\{w_j\}$ by

\begin{equation*}
\begin{aligned}
w_{j+1}&=S(t)w_0-\int_{0}^{t}S(t-s)\mathbb{P}\tilde{w}_{j}\cdot \nabla \tilde{w}_{j}\dd s, \quad  \tilde{w}_{j}=w_{j}+v,\\
w_1&=S(t)w_0,
\end{aligned}
\tag{3.9}
\end{equation*}\\
and the constants 

\begin{align*}
K_{j}&=\sup_{0<t\leq T}\Big\{ ||w_j||_{L^{\infty}\cap \h}(t)+t^{1/2}||\nabla w_{j}||_{L^{\infty}}(t)   \Big\},\\
\tilde{K}_{j}&=\sup_{0<t\leq T}\Big\{ ||\tilde{w}_j||_{L^{\infty}\cap \h}(t)+t^{1/2}||\nabla \tilde{w}_{j}||_{L^{\infty}}(t)   \Big\}.
\end{align*}\\
We may assume that $T\leq 1$. We first show that
 
\begin{align*}
K_{j}\leq CK\quad \textrm{for}\ j=1,2,\cdots,  \tag{3.10}
\end{align*}\\
for $T\leq \varepsilon_0 K^{-2}$ with some constant $\varepsilon_0>0$ and $K=||w_0||_{L^{\infty}\cap \h}+N$. We apply (3.3) to estimate 

\begin{align*}
||w_{j+1}||_{L^{\infty}\cap \h}\leq C_1||w_0||_{L^{\infty}\cap \h}
+\int_{0}^{t}||S(t-s)\mathbb{P}\tilde{w}_{j}\cdot \nabla \tilde{w}_{j}||_{L^{\infty}\cap \h}\dd s. 
\end{align*}\\
We set $\tilde{F}_{j}=\tilde{w}_{j}\tilde{w}_{j}$. Since $\D\ \tilde{F}_{j}=\tilde{w}_{j}\cdot \nabla \tilde{w}_{j}$, it follows that 

\begin{align*}
||\tilde{F}_{j}||_{\infty}&\leq \tilde{K}_j^{2}, \\
||\nabla \tilde{F}_{j}||_{\infty}&\leq \frac{2}{s^{1/2}}\tilde{K}_j^{2},\\
||\D\ \tilde{F}_j||_{2}&\leq \tilde{K}_j^{2}.
\end{align*}\\
Since $s\leq  1$ for $t\leq T$, we observe that 

\begin{align*}
||\tilde{F}_{j}||_{\infty}^{1-\alpha}||\tilde{F}_{j}||_{1,\infty}^{\alpha}
&\leq \tilde{K}_{j}^{2(1-\alpha)}\Bigg(\tilde{K}_{j}^{2}+\frac{2}{s^{\frac{1}{2}}}\tilde{K}_{j}^{2}  \Bigg)^{\alpha}\\
&=\Bigg(1+\frac{2}{s^{\frac{1}{2}}}\Bigg)^{\alpha}\tilde{K}_{j}^{2}\\
&\leq \frac{3^{\alpha}}{s^{\frac{\alpha}{2}}}\tilde{K}_{j}^{2},
\end{align*}\\
where $||\tilde{F}_{j}||_{1,\infty}=||\tilde{F}_{j}||_{W^{1,\infty}}$. We apply (3.5) and estimate 

\begin{align*}
\int_{0}^{t}||S(t-s)\p \D\ \tilde{F}_{j}||_{\infty}\dd s
&\leq \int_{0}^{t}\frac{C_3}{(t-s)^{\frac{1-\alpha}{2}}}||\tilde{F}_{j}||_{\infty}^{1-\alpha}|| \tilde{F}_{j}||_{1,\infty}^{\alpha}\dd s \\
&\leq 3^{\alpha}C_3 \tilde{K}_{j}^{2}\int_{0}^{t}\frac{\dd s}{(t-s)^{\frac{1-\alpha}{2}}s^{\frac{\alpha}{2}}   } \\
&=3^{\alpha}C_3M_1 \tilde{K}_{j}^{2} t^{\frac{1}{2}},
\end{align*}
where

\begin{align*}
M_{l}=\int_{0}^{1}\frac{\dd \eta}{(1-\eta)^{\frac{l-\alpha}{2}}{\eta}^{\frac{\alpha}{2}}}  \quad \textrm{for}\ l=1,2.
\end{align*}\\
By (3.4) we estimate 

\begin{align*}
\int_{0}^{t}||\nabla S(t-s)\p \D\ \tilde{F}_{j}||_{2}\dd s
&\leq \int_{0}^{t}\frac{C_2}{(t-s)^{\frac{1}{2}}}||\D\ \tilde{F}_{j}||_{2}\dd s \\
&\leq 2C_2 \tilde{K}_j^{2}t^{\frac{1}{2}}. 
\end{align*}\\
We obtain 

\begin{align*}
||w_{j+1}||_{L^{\infty}\cap \h}\leq C_1||w_0||_{L^{\infty}\cap \h}
+(3^{\alpha}C_3M_1+2C_2)\tilde{K}_j^{2}t^{\frac{1}{2}}.
\end{align*}\\
Similarly, applying (3.3) and (3.5) implies 

\begin{align*}
t^{\frac{1}{2}}||\nabla w_{j+1}||_{L^{\infty}}\leq C_1||w_0||_{L^{\infty}\cap \h}+3^{\alpha}C_3M_2\tilde{K}_{j}^{2}t^{\frac{1}{2}}.
\end{align*}\\
By combining the above estimates and 

\begin{align*}
\tilde{K}_{j}
&\leq K_j+N\\
&\leq K_j+K, 
\end{align*}\\
we obtain 

\begin{align*}
K_{j+1}\leq C_5 K+C_6(K_j+K)^{2}T^{\frac{1}{2}}
\end{align*} \\
with the constants 

\begin{align*}
C_5&=2C_1,\\
C_6&=2^{\alpha}C_3(M_1+M_2)+2C_2.
\end{align*}\\
We take $T\leq \varepsilon_0 K^{-2}$ for
 
\begin{align*}
\varepsilon_0=\Bigg(\frac{C_5}{C_6(2C_5+1)^{2}}\Bigg)^{2},
\end{align*}\\
and obtain (3.10) for $C=2C_5$. Since $S(t)w_0$ is weakly-star continuous on $L^{\infty}$ at time zero by Corollary 2.2, the sequence $\{w_j\}$ is uniformly bounded in $C_{w}([0,T]; L^{\infty})$. Moreover, the sequence $w_j$ satisfies $t^{1/2}\nabla w_j\in C_{w}([0,T]; L^{\infty})$ and $\nabla w_j \in C_{w}([0,T]; L^{2})$.\\

We show that $\{w_j\}$ converges to a limit $w\in C_{w}([0,T]; L^{\infty})$. We set 

\begin{align*}
&\rho_j=w_{j}-w_{j-1},\\
&L_{j}=\sup_{0<t\leq T}\Big\{||\rho_j||_{L^{\infty}\cap \h}(t)+t^{\frac{1}{2}}||\nabla \rho_j||_{L^{\infty}}(t)  \Big\}.
\end{align*}\\
Since 

\begin{align*}
\tilde{w}_{j}\cdot \nabla \tilde{w}_{j}-\tilde{w}_{j-1}\cdot \nabla\tilde{w}_{j-1}
&=\tilde{w}_{j}\cdot \nabla (\tilde{w}_{j}-\tilde{w}_{j-1})+\tilde{w}_{j}\cdot \nabla\tilde{w}_{j-1}
-(\tilde{w}_{j-1}-\tilde{w}_{j})\cdot \nabla\tilde{w}_{j-1}
-\tilde{w}_{j}\cdot \nabla\tilde{w}_{j-1}\\
&=\tilde{w}_{j}\cdot \nabla\rho_j+\rho_j\cdot \nabla \tilde{w}_{j-1},
\end{align*}\\
we estimate 

\begin{align*}
\rho_{j+1}=-\int_{0}^{t}S(t-s)\p(\tilde{w}_{j}\cdot \nabla\rho_j+\rho_j\cdot \nabla \tilde{w}_{j-1})\dd s.
\end{align*}\\
We set $F_1=\tilde{w}_{j}\rho_j$. Since $\D\ F_1=\tilde{w}_{j}\cdot \nabla \rho_j$, we have 

\begin{align*}
||{F}_{1}||_{\infty}&\leq \tilde{K}_jL_j, \\
||\nabla {F}_{1}||_{\infty}&\leq \frac{2}{s^{1/2}}\tilde{K}_jL_j,\\
||\D\ {F}_{1}||_{2}&\leq \tilde{K}_jL_j.
\end{align*}\\
Since $s\leq 1$, it follows that 

\begin{align*}
||F_1||_{\infty}^{1-\alpha}|| F_1||_{1,\infty}^{\alpha}
&\leq (\tilde{K}_j L_j)^{1-\alpha}\Bigg(\tilde{K}_j L_j+\frac{2}{s^{\frac{1}{2}}}\tilde{K}_j L_j\Bigg)^{\alpha}\\
&\leq \frac{3^{\alpha}}{s^{\frac{\alpha}{2}}}\tilde{K}_j L_j.
\end{align*}\\
We apply (3.5) to estimate 

\begin{align*}
\int_{0}^{t}||S(t-s)\p\D\ F_1||_{\infty}\dd s
&\leq \int_{0}^{t}\frac{C_3}{(t-s)^{\frac{1-\alpha}{2}}}||F_1||_{\infty}^{1-\alpha}|| F_1||_{1,\infty}^{\alpha}\dd s\\
&\leq 3^{\alpha} C_3 \tilde{K}_{j}L_j\int_{0}^{t}\frac{\dd s}{(t-s)^{\frac{1-\alpha}{2}}s^{\frac{\alpha}{2}}}\\
&=3^{\alpha} C_3M_1 \tilde{K}_{j}L_j  t^{\frac{1}{2}}.
\end{align*}\\
Similarly, applying (3.5) and (3.4) yields 

\begin{align*}
\int_{0}^{t}||\nabla S(t-s)\p\D\ F_1||_{\infty}\dd s
&\leq 3^{\alpha}C_3 M_2 \tilde{K}_{j}L_{j},\\
\int_{0}^{t}||\nabla S(t-s)\p\D\ F_1||_{2}\dd s
&\leq 2C_2 \tilde{K}_{j}L_{j}t^{\frac{1}{2}}.
\end{align*}\\ 
By combining the above estimates, we have  

\begin{align*}
&\int_{0}^{t}||S(t-s)\p\D\ F_1||_{L^{\infty}\cap \h}\dd s
+t^{\frac{1}{2}}\int_{0}^{t}||\nabla S(t-s)\p\D\ F_1||_{L^{\infty}}\dd s \\
&\leq (3^{\alpha}C_3(M_1+M_2)+2C_2 )\tilde{K}_{j}L_{j}t^{\frac{1}{2}}.
\end{align*}\\
By a similar way, we set $F_2=\rho_j\tilde{w}_{j-1}$ and estimate

\begin{align*}
&\int_{0}^{t}||S(t-s)\p\D\ F_2||_{L^{\infty}\cap \h}\dd s
+t^{\frac{1}{2}}\int_{0}^{t}||\nabla S(t-s)\p\D\ F_2||_{2}\dd s \\
&\leq (3^{\alpha}C_3(M_1+M_2)+2C_2 )\tilde{K}_{j-1}L_{j}t^{\frac{1}{2}}.
\end{align*}\\
By combining the above estimates for $F_1$ and $F_2$, we obtain

\begin{align*}
L_{j+1}\leq C_7 (\tilde{K}_{j}+\tilde{K}_{j-1})L_jT^{\frac{1}{2}}
\end{align*}\\
with the constant $C_7=3^{\alpha}C_3 (M_1+M_2)+2C_2$. Since the estimate (3.10) holds for $C=2C_5$, it follows that 

\begin{align*}
\tilde{K}_{j}
&\leq K_j+N\\
&\leq 2C_5 K+N\\
&\leq(2C_5+1)K.
\end{align*}\\
We take $T\leq \varepsilon_1 K^{-2}$ for 

\begin{align*}
\varepsilon_1=\Bigg(\frac{1}{4C_7(2C_5+1)}\Bigg)^{2},
\end{align*}\\
so that 

\begin{align*}
L_{j+1}
\leq \frac{1}{2}L_{j}\quad \textrm{for}\ j=1,2,\cdots.
\end{align*}\\
It follows that

\begin{align*}
L_{j+1}
\leq \Big(\frac{1}{2}\Big)^{j}L_1\to 0\quad \textrm{as}\ j\to\infty.
\end{align*}\\
Thus, $w_{j}$ converges to a limit $w\in C_{w}([0,T]; L^{\infty})$ uniformly in $\overline{\Omega}\times [0,T]$ and the limit $w$ satisfies $t^{1/2}\nabla w\in C_{w}([0,T]; L^{\infty})$ and $\nabla w\in C_{w}([0,T]; L^{2})$. By sending $j\to\infty$ to (3.9), the limit $w$ satisfies the integral equation. It is not difficult to show the uniqueness by estimating difference of two mild solutions. We proved the existence of mild solution for $T>0$ satisfying $T\leq \varepsilon_2 K^{-2}$ for $\varepsilon=\min\{\varepsilon_0,\varepsilon_1 \}$. Thus the existence time is estimated from below by $T\geq \varepsilon K^{-2}$ for 

\begin{align*}
\varepsilon=\frac{\varepsilon_2}{2}.
\end{align*}\\
The proof is now complete.
\end{proof}

\vspace{5pt}

\vspace{10pt}

%Section 4
\section{Global solutions of the perturbed system}

\vspace{10pt}

The goal of this section is to construct global solutions of the perturbed system (3.1) for $w_0=0$ (Theorem 4.5). We first show that mild solutions on $L^{\infty}\cap \h$ satisfy the system (3.1) in a suitable sense and a global energy estimate holds. In the subsequent section, we prove that mild solutions are globally bounded on $L^{\infty}$ by using an $L^{p}$-blow-up estimate. We then estimate a global $H^{1}$-norm of mild solutions by using an integral form.

\vspace{5pt}

\subsection{Energy estimates}

We first show that the mild solution for $w_0=0$ satisfies (3.1) on $L^{2}$.

\vspace{5pt}

%prop4.2
\begin{lem}
Assume that $w_0=0$. Then, the mild solution $w$ of (3.1) satisfies

\begin{equation*}
\begin{aligned}
w\in C([0,T]; L^{p}),\quad 2\leq p\leq \infty,\\
\nabla w\in C([0,T]; L^{q}),\quad 2\leq q<\infty.
\end{aligned}
\tag{4.1}
\end{equation*}\\
Moreover, 

\begin{align*}
\partial_t w,\ \nabla^{2}w\in L^{s}(0,T; L^{2}),\quad   s\in (1,\infty),
\tag{4.2}
\end{align*}\\
and $w$ satisfies (3.1) on $L^{2}$ for a.e. $t\in (0,T)$ with the associated pressure $\pi$.
\end{lem}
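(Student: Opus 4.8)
The plan is to upgrade the mild solution $w$ of (3.1) with $w_0=0$ to a strong solution by bootstrapping regularity from the integral representation $w(t)=-\int_0^t S(t-s)\mathbb P(\tilde w\cdot\nabla\tilde w)(s)\,\mathrm ds$, where $\tilde w=w+v$, and then to identify the PDE it satisfies on $L^2$. First I would record what Theorem 3.1 gives directly: $w,t^{1/2}\nabla w\in C_w([0,T];L^\infty)$ and $\nabla w\in C_w([0,T];L^2)$, and similarly $v$ satisfies (3.2), so $\tilde w$ has the same properties and the forcing term $\tilde w\cdot\nabla\tilde w$ can be written in divergence form $\mathrm{div}\,\tilde F$ with $\tilde F=\tilde w\otimes\tilde w$ satisfying $\|\tilde F\|_{L^\infty}\le N'^2$, $\|\nabla\tilde F\|_{L^\infty}\le Ct^{-1/2}N'^2$, and $\|\mathrm{div}\,\tilde F\|_{L^2}\le CN'^2$ (with $N'$ the analogue of $N$ for $\tilde w$, finite by Theorem 3.1). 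Using the estimates (3.4)--(3.6) of Proposition 3.2 with $w_0=0$, the convolution $\int_0^t S(t-s)\mathbb P\,\mathrm{div}\,\tilde F\,\mathrm ds$ is then seen to be continuous at $t=0$ with value $0$; since the $L^\infty$- and $L^2$-estimates kill the $t^{1/2}$ weight as $t\to0$, one upgrades the weak-star/weak continuity to strong continuity, giving $w\in C([0,T];L^p)$ for $2\le p\le\infty$ (interpolating $L^\infty$ and $L^2$) and $\nabla w\in C([0,T];L^q)$ for $2\le q<\infty$ (interpolating $\nabla w\in C([0,T];L^2)$, which follows from (3.4)/(3.6), with $\nabla w\in C([0,T];L^\infty\text{ weak})$ via the $t^{1/2}$-bound and a short-time argument). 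This is essentially (4.1).

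Next I would prove the maximal-regularity statement (4.2). The key point is that $\mathrm{div}\,\tilde F=\tilde w\cdot\nabla\tilde w\in L^s(0,T;L^2)$ for every $s<\infty$: indeed $\|\tilde w\cdot\nabla\tilde w\|_{L^2}\le\|\tilde w\|_{L^\infty}\|\nabla\tilde w\|_{L^2}\le CN'^2$ is bounded on $(0,T]$, hence in every $L^s$ in time. Since the Stokes operator $A$ generates a bounded analytic $C_0$-semigroup on $L^2_\sigma$, it has maximal $L^s$-regularity on $L^2$ for $s\in(1,\infty)$ (e.g.\ \cite[IV.1.6]{Sohr} or the general de Simon theorem), so the solution of $\partial_t w+Aw=-\mathbb P(\tilde w\cdot\nabla\tilde w)$, $w(0)=0$, given by the Duhamel formula above, satisfies $\partial_t w,\,Aw\in L^s(0,T;L^2)$; by $L^2$-elliptic regularity for the Stokes system ($C^3$-boundary suffices), $\|Aw\|_{L^2}$ controls $\|\nabla^2 w\|_{L^2}$ up to lower-order terms already controlled, which yields $\partial_t w,\nabla^2 w\in L^s(0,T;L^2)$. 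Having $\partial_t w,Aw\in L^s_{\mathrm{loc}}$, the Duhamel formula can be differentiated to give $\partial_t w+Aw=-\mathbb P(\tilde w\cdot\nabla\tilde w)$ in $L^2$ for a.e.\ $t$; recovering the pressure $\pi$ from the Helmholtz decomposition (the non-solenoidal part of $-\tilde w\cdot\nabla\tilde w-\Delta w$, with $\Delta w$ interpreted via $Aw=-\mathbb P\Delta w$ and the boundary condition) turns this into the pointwise identity (3.1) on $L^2$ with $w=0$ on $\partial\Omega$ (from $w(t)\in D(A)\subset H^1_{0,\sigma}$ for a.e.\ $t$).

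I expect the main obstacle to be the passage from the \emph{weak} continuity/regularity supplied by Theorem 3.1 to \emph{strong} continuity and to the maximal-regularity class, because the semigroup $S(t)$ is only weak-star continuous on $L^\infty$ (not a $C_0$-semigroup there) and the a priori bounds for $\nabla w$ in $L^\infty$ carry the singular weight $t^{-1/2}$. The way around this is to exploit $w_0=0$: the inhomogeneous term alone is being estimated, and the estimates (3.5)--(3.6) with $w_0$ replaced by $0$ show the Duhamel integral lies in $C([0,T];L^\infty)$ and $C([0,T];L^2)$ with value $0$ at $t=0$ — no initial-data boundary layer is present — so all the "$C_w$" can be promoted to "$C$". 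One must be a little careful that the nonlinear term is genuinely in $L^s(0,T;L^2)$ uniformly rather than only with a $t^{-1/2}$ singularity; here the bound $\|\tilde w\cdot\nabla\tilde w\|_{L^2}\le\|\tilde w\|_{L^\infty}\|\nabla\tilde w\|_{L^2}$ uses the \emph{unweighted} $\nabla\tilde w\in L^2$ (not the weighted $\nabla\tilde w\in L^\infty$), which is exactly why the statement is restricted to $L^2$-based spaces and why $s<\infty$ (but all $s<\infty$) is the natural range.
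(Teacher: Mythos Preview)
Your approach is essentially the same as the paper's: both exploit that $f=-\tilde w\cdot\nabla\tilde w\in L^\infty(0,T;L^2)$ (via $\|\tilde w\|_{L^\infty}\|\nabla\tilde w\|_{L^2}$), then feed this into semigroup smoothing estimates for (4.1) and into de~Simon maximal regularity plus the resolvent/elliptic estimate $\|\nabla^2 w\|_2\lesssim\|w\|_2+\|Aw\|_2$ for (4.2). The paper just packages the semigroup step more cleanly by first proving the interpolated estimate
\[
\|\partial_x^k S(t)\mathbb P f\|_{L^p}\le Ct^{-\frac{|k|+1}{2}+\frac1p}\|\mathbb P f\|_{L^2},\qquad 2\le p\le\infty,\ |k|\le1,
\]
and applying it directly to the Duhamel integral, yielding $\|w(t)\|_{L^p}\lesssim t^{1/2+1/p}$ and $\|\nabla w(t)\|_{L^q}\lesssim t^{1/q}$, hence strong continuity at $t=0$ for all $2\le p\le\infty$ and all $2\le q<\infty$ in one stroke.

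There is one soft spot in your route for $\nabla w$. Interpolating $\nabla w\in C([0,T];L^2)$ against the $t^{1/2}$-weighted $L^\infty$ bound from Theorem~3.1 gives only $\|\nabla w(t)\|_{L^q}\lesssim t^{2/q-1/2}$, which does \emph{not} vanish as $t\to0$ once $q\ge4$; your ``short-time argument'' would have to supply something extra. The fix is either to do what the paper does (estimate $\|\nabla S(t-s)\mathbb P f\|_{L^q}$ directly from $\|f\|_{L^2}$ and integrate, which converges for every $q<\infty$), or to observe that when $w_0=0$ the estimate (3.5) with $|k|=1$ actually yields a \emph{uniform} bound $\sup_{0<t\le T}\|\nabla w(t)\|_{L^\infty}<\infty$ (no $t^{-1/2}$ loss), after which your interpolation goes through for all $q<\infty$. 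Either way the gap is easily closed; the maximal-regularity half of your argument matches the paper verbatim.
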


\vspace{5pt}

In order to investigate regularity properties of mild solutions, we prepare an $L^{p}$-estimate of the Stokes semigroup.

\vspace{5pt}

\begin{prop}
There exists a constant $C$ such that 

\begin{align*}
||\partial_x^{k}S(t)\p f||_{L^{p}}\leq \frac{C}{t^{\frac{|k|+1}{2}-\frac{1}{p}  }}||\p f||_{L^{2}}  \tag{4.3}
\end{align*}\\
holds for $f\in L^{2}$, $2\leq p\leq \infty$, $|k|\leq 1$ and $0<t\leq T_0$.
\end{prop}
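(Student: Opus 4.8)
The plan is to obtain (4.3) entirely from estimates already at hand: the $L^{2}$-analyticity bound (3.4), the $L^{\infty}$-bound (3.7) established inside the proof of (3.6), the $L^{\infty}\cap\dot{H}^{1}$-bound (3.6), and the gradient bound (3.3). First I would put $g:=\mathbb{P}f\in L^{2}_{\sigma}$; since $\mathbb{P}g=g$ we have $S(t)\mathbb{P}f=S(t)g$ and $\|g\|_{L^{2}}=\|\mathbb{P}f\|_{L^{2}}$, so it suffices to prove
\[
\|\partial_{x}^{k}S(t)g\|_{L^{p}}\le C\,t^{-\frac{|k|+1}{2}+\frac{1}{p}}\|g\|_{L^{2}},\qquad |k|\le 1,\ \ 2\le p\le\infty,\ \ 0<t\le T_{0}.
\]

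For the endpoint $p=2$ this is precisely (3.4), which gives $\|\partial_{x}^{k}S(t)g\|_{L^{2}}\le C\,t^{-|k|/2}\|g\|_{L^{2}}$ and $\tfrac{|k|+1}{2}-\tfrac12=\tfrac{|k|}{2}$. For $p=\infty$ and $k=0$ it is exactly (3.7), namely $\|S(t)g\|_{L^{\infty}}\le C\,t^{-1/2}\|g\|_{L^{2}}$. The only case needing a genuinely new argument is $p=\infty$, $|k|=1$. Here I would use the semigroup property, writing $S(t)g=S(t/2)v$ with $v:=S(t/2)g$. By (3.6) (applied with $g$ in place of $\mathbb{P}f$, using $\mathbb{P}g=g$) one has $\|v\|_{L^{\infty}\cap\dot{H}^{1}}\le C\,t^{-1/2}\|g\|_{L^{2}}$; since $S(t/2)$ sends $L^{2}_{\sigma}$ into solenoidal fields vanishing on $\partial\Omega$, this gives $v\in L^{\infty}_{\sigma}\cap\dot{H}^{1}_{0}$. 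Feeding $v$ into (3.3) yields $t^{1/2}\|\nabla S(t/2)v\|_{L^{\infty}}\le C\|v\|_{L^{\infty}\cap\dot{H}^{1}}\le C\,t^{-1/2}\|g\|_{L^{2}}$, hence $\|\nabla S(t)g\|_{L^{\infty}}\le C\,t^{-1}\|g\|_{L^{2}}$, which is (4.3) in this case.

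Finally, for $2<p<\infty$ I would just interpolate the two endpoints already obtained. Applying the elementary $L^{p}$-interpolation $\|h\|_{L^{p}}\le\|h\|_{L^{2}}^{2/p}\|h\|_{L^{\infty}}^{1-2/p}$ to $h=\partial_{x}^{k}S(t)g$ gives
\[
\|\partial_{x}^{k}S(t)g\|_{L^{p}}\le C\bigl(t^{-\frac{|k|+1}{2}+\frac12}\bigr)^{\frac{2}{p}}\bigl(t^{-\frac{|k|+1}{2}}\bigr)^{1-\frac{2}{p}}\|g\|_{L^{2}}=C\,t^{-\frac{|k|+1}{2}+\frac{1}{p}}\|g\|_{L^{2}},
\]
using the $p=2$ bound (time exponent $-\tfrac{|k|+1}{2}+\tfrac12$) and the $p=\infty$ bound (time exponent $-\tfrac{|k|+1}{2}$). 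Collecting the three ranges of $p$ proves (4.3). The step I expect to require the most care is verifying that $v=S(t/2)g$ genuinely lies in $L^{\infty}_{\sigma}\cap\dot{H}^{1}_{0}$ so that (3.3) may be applied; this is exactly what (3.6) supplies, together with the smoothing and boundary behaviour of the Stokes semigroup for positive times, and since $0<t\le T_{0}$ the constants stay uniform and no lower-order terms enter.
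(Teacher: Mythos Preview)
Your proof is correct and follows essentially the same route as the paper: reduce to the endpoints $p=2$ (via (3.4)) and $p=\infty$ (via (3.7) for $k=0$), then interpolate. The only minor difference is in the case $p=\infty$, $|k|=1$: the paper splits $S(t)=S(t/2)S(t/2)$ and applies the pure $L^{\infty}$ gradient estimate $\|\nabla S(t/2)w\|_{L^{\infty}}\le Ct^{-1/2}\|w\|_{L^{\infty}}$ from \cite{AG2} followed by (3.7), whereas you use (3.3) (which needs the full $L^{\infty}\cap\dot H^{1}$ norm of the intermediate data) followed by (3.6); both variants yield the same bound $\|\nabla S(t)\mathbb{P}f\|_{L^{\infty}}\le Ct^{-1}\|\mathbb{P}f\|_{L^{2}}$.
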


\vspace{5pt}

\begin{proof}
It suffices to show the cases $p=2$ and $p=\infty$. Then, the desired estimates follow from interpolation. When $k=0$, (4.3) holds for $p=2$ and $p=\infty$ by (3.4) and (3.6). When $|k|=1$, (4.3) holds for $p=2$ by (3.4). By the $L^{\infty}$-estimate of the Stokes semigroup \cite{AG2}, we have 

\begin{align*}
||\nabla S(t)\p f||_{\infty}
&=\Big\|\nabla S\Big(\frac{t}{2}\Big)S\Big(\frac{t}{2}\Big)\p f\Big\|_{\infty}\\
&\leq \frac{C}{t^{\frac{1}{2}}}\Big\|S\Big(\frac{t}{2}\Big)\p f\Big\|_{\infty}.
\end{align*}\\
Since (4.3) holds for $|k|=0$ and $p=\infty$, we obtain 

\begin{align*}
||\nabla S(t)\p f||_{\infty}\leq \frac{C}{t}||\p f||_{2}.
\end{align*}\\
We proved (4.3) for $|k|=1$ and $p=\infty$. 
\end{proof}

\vspace{5pt}

\begin{proof}[Proof of Lemma 4.1]
We estimate the mild solution

\begin{align*}
w=\int_{0}^{t}S(t-s)\p f\dd s
\end{align*}\\
for $f=-\tilde{w}\cdot\nabla \tilde{w}$ and $\tilde{w}=w+v$. Since $\tilde{w}$ satisfies $\tilde{w}\in C_{w}([0,T]; L^{\infty})$ and $\nabla \tilde{w}\in C_{w}([0,T]; L^{2})$ by Theorem 3.1 and (3.2), we observe that 

\begin{align*}
f\in L^{\infty}(0,T; L^{2}).
\end{align*}\\
If follows from (4.3) that 

\begin{align*}
||w||_{L^{p}}
&\leq \int_{0}^{t}||S(t-s)\mathbb{P}f||_{L^{p}}\dd s\\
&\leq C\int_{0}^{t}\frac{\dd s}{(t-s)^{\frac{1}{2}-\frac{1}{p} }}||\p f||_{L^{\infty}(0,T; L^{2})}\\
&\leq Ct^{\frac{1}{2}+\frac{1}{p}}||f||_{L^{\infty}(0,T; L^{2})}\to 0\quad \textrm{as}\ t\to0.
\end{align*}\\
Thus, $w\in C([0,T]; L^{p})$ for $p\in [2,\infty]$. By a similar way, applying (4.3) for $|k|=1$ implies that $\nabla w\in C([0,T]; L^{q})$ for $q\in [2,\infty)$.\\

It remains to show (4.2). By the maximal regularity estimate of the Stokes operator $A$ \cite[Theorem 4.4]{deSimon64} (\cite[IV. 1.6.2 Lemma]{Sohr}), we estimate 

\begin{align*}
||\partial_t w||_{L^{s}(0,T; L^{2})}+||A w||_{L^{s}(0,T; L^{2})}\leq C||f||_{L^{s}(0,T; L^{2})}\quad \textrm{for}\ s\in (1,\infty).
\end{align*}\\
By the resolvent estimate, we have 

\begin{align*}
||\nabla^{2}w||_{2}\leq C'(||w||_{2}+||Aw||_{2}).
\end{align*} \\
Thus (4.2) holds and $w$ satisfies (3.1) on $L^{2}$ with the associated pressure $\pi$. 
\end{proof}

\vspace{5pt}

Since the mild solution $w$ satisfies (3.1) on $L^{2}$, by integration by parts we obtain the global energy estimate.

\vspace{5pt}

%Lemma 4.3
\begin{lem}
The estimate  

\begin{align*}
\int_{\Omega}|w|^{2}\dd x+\int_{0}^{t}\int_{\Omega}|\nabla w|^{2}\dd x\dd s\leq \frac{1}{2}(e^{2N^{2}t}-1)\quad 0\leq t\leq T_0   \tag{4.4}
\end{align*}\\
holds for mild solutions of (3.1) for $w_0=0$ and $T_0>0$ with the constant $N$ in Theorem 3.1. In particular, $w\in L^{4}(0,T_0; L^{4})$.
\end{lem}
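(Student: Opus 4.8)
The plan is to test the equation \eqref{3.1} for $w_0=0$ against $w$ itself in $L^2$ and run a Gronwall-type argument. By Lemma 4.1 we may do this rigorously: $w$ satisfies \eqref{3.1} on $L^2$ for a.e.\ $t$, with $\partial_t w, \nabla^2 w\in L^s(0,T_0;L^2)$, so $\tfrac12\tfrac{d}{dt}\|w\|_{L^2}^2 = \langle \partial_t w, w\rangle$ holds in the sense of distributions (indeed absolutely continuously). Pairing the momentum equation against $w$, the pressure term $\langle\nabla\pi,w\rangle$ vanishes since $\D w=0$ and $w|_{\partial\Omega}=0$, the Stokes term gives $+\|\nabla w\|_{L^2}^2$, and the transport terms $\langle w\cdot\nabla w, w\rangle$ and $\langle v\cdot\nabla w, w\rangle$ both vanish by the divergence-free condition and the boundary condition (integration by parts: $\int b\cdot\nabla w\cdot w = \tfrac12\int b\cdot\nabla|w|^2 = -\tfrac12\int(\D b)|w|^2 = 0$ for $b=w$ or $b=v$). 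What survives on the right-hand side is $-\langle w\cdot\nabla v, w\rangle - \langle v\cdot\nabla v, w\rangle$.

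Next I would bound these two remaining terms. For the first, integrate by parts to move the derivative off $v$: $-\langle w\cdot\nabla v, w\rangle = \langle v\cdot\nabla w, w\rangle + \langle v\cdot\nabla w, w\rangle$-type manipulation — more directly, $|\langle (w\cdot\nabla v)\cdot w\rangle| = |\int (w\otimes w):\nabla v\,\dd x|$; after one integration by parts this equals $|\int (v\cdot\nabla w)\cdot w + (v\cdot w)(\D w)\,\dd x|$, and since $\D w = 0$ and $\int v\cdot\nabla(\tfrac12|w|^2) = 0$, in fact $\langle (w\cdot\nabla v), w\rangle = -\langle (w\cdot\nabla w), v\rangle$, which we estimate by $\|v\|_{L^\infty}\|w\|_{L^2}\|\nabla w\|_{L^2}$. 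Using $\|v\|_{L^\infty}\le N$ and Young's inequality, $|\langle w\cdot\nabla v, w\rangle| \le N\|w\|_{L^2}\|\nabla w\|_{L^2} \le \tfrac14\|\nabla w\|_{L^2}^2 + N^2\|w\|_{L^2}^2$. For the second term, $|\langle v\cdot\nabla v, w\rangle| \le \|v\|_{L^\infty}\|\nabla v\|_{L^2}\|w\|_{L^2} \le N^2\|w\|_{L^2}$ (using $\|v\|_{L^\infty\cap\dot H^1}\le N$), and then $\le \tfrac12 N^2 + \tfrac12 N^2\|w\|_{L^2}^2$, or more cleanly absorb it into a form matching the target RHS. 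Combining, one obtains a differential inequality of the shape $\tfrac{d}{dt}\|w\|_{L^2}^2 + \|\nabla w\|_{L^2}^2 \le c N^2\|w\|_{L^2}^2 + c N^2$; integrating this linear ODE with $w(0)=0$ via the integrating factor $e^{cN^2 t}$ yields a bound of the form $\|w\|_{L^2}^2 + \int_0^t\|\nabla w\|_{L^2}^2 \le \tfrac12(e^{2N^2 t}-1)$ once the constants are tracked carefully (the factor $2N^2$ in the exponent and the $\tfrac12$ prefactor are exactly what a clean choice of Young-inequality splittings produces).

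Finally, $w\in L^4(0,T_0;L^4)$ follows from the two-dimensional Ladyzhenskaya/Sobolev interpolation inequality $\|w\|_{L^4}\le C\|w\|_{L^2}^{1/2}\|w\|_{H^1}^{1/2}$ (the estimate \eqref{3.8} applied to $\varphi = w$, together with $\|w\|_{H^1}^2 \le \|w\|_{L^2}^2 + \|\nabla w\|_{L^2}^2$): raising to the fourth power and integrating in time, $\int_0^{T_0}\|w\|_{L^4}^4\,\dd t \le C\sup_{[0,T_0]}\|w\|_{L^2}^2\int_0^{T_0}(\|w\|_{L^2}^2+\|\nabla w\|_{L^2}^2)\,\dd t$, which is finite by \eqref{4.4}. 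The main obstacle is purely bookkeeping: getting the constants in the Young splittings and the Gronwall step to land precisely on $\tfrac12(e^{2N^2 t}-1)$ rather than some larger expression — there is no analytic difficulty, only the need to make the cross-term estimates tight (in particular using the structure $\langle w\cdot\nabla v, w\rangle = -\langle w\cdot\nabla w, v\rangle$ to keep only one factor of $N$ and one factor of $\|\nabla w\|_{L^2}$, so that half of the dissipation suffices to absorb it).
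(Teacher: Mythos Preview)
Your proposal is correct and follows essentially the same approach as the paper: test (3.1) against $2w$, use the identity $\int (w\cdot\nabla v)\cdot w = -\int (w\cdot\nabla w)\cdot v$ to estimate the first cross term by $\|v\|_\infty\|w\|_2\|\nabla w\|_2$, apply Young's inequality to both surviving terms to obtain $\tfrac{d}{dt}\|w\|_2^2 + \|\nabla w\|_2^2 \le 2N^2\|w\|_2^2 + N^2$, then integrate via the factor $e^{-2N^2 t}$ and finish with the two-dimensional Ladyzhenskaya inequality for the $L^4$ conclusion. The only cosmetic difference is that the paper splits the second term as $|2\int(v\cdot\nabla v)\cdot w|\le \|v\|_\infty^2\|w\|_2^2 + \|\nabla v\|_2^2$ and uses the homogeneous Sobolev bound $\|w\|_4\le C\|w\|_2^{1/2}\|\nabla w\|_2^{1/2}$ rather than the inhomogeneous form you cite, but these choices lead to the same constants.
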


\vspace{5pt}

\begin{proof}
We multiply $2w$ by (3.1) and observe that

\begin{align*}
\frac{\dd}{\dd t}\int_{\Omega}|w|^{2}\dd x+2\int_{\Omega}|\nabla w|^{2}\dd x=-2\int_{\Omega}(w\cdot \nabla v)\cdot w\dd x-2\int_{\Omega}(v\cdot \nabla v)\cdot w\dd x.
\end{align*}\\
Applying the Young's inequality implies that

\begin{align*}
\Bigg| 2\int_{\Omega}(w\cdot \nabla v)\cdot w\dd x\Bigg|
&=\Bigg| 2\int_{\Omega}(w\cdot \nabla w)\cdot v\dd x\Bigg| \\
&\leq 2||w||_{2}||v||_{\infty}||\nabla w||_{2}\\
&\leq ||w||_{2}^{2}||v||_{\infty}^{2}+||\nabla w||_{2}^{2},\\
\Bigg| 2\int_{\Omega}(v\cdot \nabla v)\cdot w\dd x\Bigg|
&\leq ||v||_{\infty}^{2}||w||_{2}^{2}+||\nabla v||_{2}^{2}.
\end{align*} \\
It follows that

\begin{align*}
\frac{\dd}{\dd t}\int_{\Omega}|w|^{2}\dd x+\int_{\Omega}|\nabla w|^{2}\dd x
\leq 2N^{2}\int_{\Omega}|w|^{2}\dd x+N^{2}.
\end{align*}\\
We set

\begin{align*}
\varphi(t)=\int_{\Omega}|w|^{2}\dd x,
\end{align*}\\
and observe that $\varphi$ satisfies $\varphi(0)=0$ and  

\begin{align*}
\frac{\dd}{\dd t}(\varphi e^{-2N^{2}t})
&=(\dot{\varphi}-2N^{2}\varphi)e^{-2N^{2}t}\\
&\leq N^{2}e^{-2N^{2}t}.
\end{align*}\\
Integrating the both sides between $(0,t)$ yields that  

\begin{align*}
\varphi(t)\leq \frac{1}{2}(e^{2N^{2}t}-1).
\end{align*}\\
It follows that 

\begin{align*}
\int_{\Omega}|w|^{2}\dd x+\int_{0}^{t}\int_{\Omega}|\nabla w|^{2}\dd x\dd s
&\leq
2N^{2}\int_{0}^{t}\varphi(s)\dd s+N^{2}t \\
&\leq N^{2}\int_{0}^{t}(e^{2N^{2}s}-1) \dd s+N^{2}t  \\
&=\frac{1}{2}(e^{2N^{2}t}-1).
\end{align*}\\
We proved (4.4). By the Sobolev inequality \cite[Chapter1, Lemma 1]{Lady61}, we estimate

\begin{align*}
||w||_{4}\leq C_s||w||_{2}^{\frac{1}{2}}||\nabla w||_{2}^{\frac{1}{2}}.
\end{align*}\\
By the global energy estimate (4.4), $w\in L^{4}(0,T_0; L^{4})$ follows. 
\end{proof}

\vspace{10pt}
%3.2
\subsection{$L^{\infty}$-bounds}

The global bound in $L^{4}(0,T_0; L^{4})$ implies:

\vspace{10pt}

%Prop 4.4
\begin{prop}
The mild solution $w$ of (3.1) for $w_0=0$ is globally bounded in $\overline{\Omega}\times [0,T_0]$.
\end{prop}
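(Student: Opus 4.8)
The plan is to run a contradiction argument on the maximal interval of existence, feeding in the space--time $L^{4}$ bound of Lemma 4.4 together with the quantitative local $L^{p}$-existence time from Appendix A, whose $p=4$ instance produces a blow-up rate that is exactly borderline non-integrable in two dimensions. Concretely, I would let $[0,T_{\ast})$ be the maximal interval on which the mild solution $w$ of (3.1) with $w_{0}=0$ supplied by Theorem 3.1 persists in its natural class ($w\in C_{w}([0,T];L^{\infty})$, $t^{1/2}\nabla w\in C_{w}([0,T];L^{\infty})$, $\nabla w\in C_{w}([0,T];L^{2})$ for all $T<T_{\ast}$), and reduce the proposition to showing $T_{\ast}>T_{0}$: once this is known, weak-star continuity of $t\mapsto w(t)$ on the compact set $[0,T_{0}]\subset[0,T_{\ast})$ together with the uniform boundedness principle yields $\sup_{[0,T_{0}]}\|w(t)\|_{L^{\infty}}<\infty$, hence boundedness of $w$ on $\overline{\Omega}\times[0,T_{0}]$.

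To show $T_{\ast}>T_{0}$ I would argue by contradiction and suppose $T_{\ast}\le T_{0}$. By Lemma 4.1 the solution satisfies $w(t)\in L^{4}_{\sigma}$ for every $t<T_{\ast}$, so from any such time it can be re-solved within the $L^{4}$-theory of (1.5) of Appendix A, where by uniqueness it agrees with $w$. The crucial input is that this $L^{4}$-theory furnishes a lower bound for the local existence time of the shape $T_{\mathrm{exist}}\ge c\,(\|w(t_{0})\|_{L^{4}}+\tilde N)^{-4}$ upon restarting at $t_{0}$, the exponent $4=\tfrac{2p}{p-2}\big|_{p=4}$ being forced by the parabolic scaling and $\tilde N$ a finite quantity depending only on $v$ over $[0,T_{0}]$ (in particular uniform in $t_{0}$). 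Maximality of $T_{\ast}$ then forces $t_{0}+c\,(\|w(t_{0})\|_{L^{4}}+\tilde N)^{-4}\le T_{\ast}$ for all $t_{0}<T_{\ast}$, which after rearranging gives $\|w(t_{0})\|_{L^{4}}\ge \tfrac{c'}{2}\,(T_{\ast}-t_{0})^{-1/4}$ for $t_{0}$ close enough to $T_{\ast}$.

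From here the contradiction is immediate: raising to the fourth power and integrating yields $\int_{0}^{T_{\ast}}\|w(t)\|_{L^{4}}^{4}\,\dd t=\infty$, whereas Lemma 4.4 (applicable since $T_{\ast}\le T_{0}$) gives $w\in L^{4}(0,T_{\ast};L^{4})$. Hence $T_{\ast}>T_{0}$ and, by the first paragraph, $w$ is bounded on $\overline{\Omega}\times[0,T_{0}]$.

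The step I expect to be the main obstacle is not the scaling bookkeeping above but the compatibility of solution classes needed to make the restart legitimate: one must verify that the $L^{4}$-mild solution of Appendix A, launched from $w(t_{0})\in L^{4}_{\sigma}$ with perturbation $v|_{[t_{0},T_{0}]}$, truly extends $w$ as a solution in the class of Theorem 3.1 (so the $L^{\infty}\cap\dot H^{1}$ regularity is preserved and the two uniqueness assertions do not conflict), and that $\tilde N$ can genuinely be chosen independent of $t_{0}$. This uniformity is precisely what upgrades the qualitative statement ``$\|w(t)\|_{L^{4}}$ blows up'' into the quantitative rate $(T_{\ast}-t_{0})^{-1/4}$ on which the whole argument rests; everything else is the standard observation that $(q,p)=(4,4)$ sits on the Serrin line $2/q+2/p=1$ in two dimensions.
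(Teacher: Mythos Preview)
Your core strategy—contradiction via the $L^{4}$ blow-up rate $\|w(t)\|_{L^{4}}\gtrsim (T_{\ast}-t)^{-1/4}$ against the energy bound $w\in L^{4}(0,T_{0};L^{4})$—is exactly the paper's. The substantive difference is what you take $T_{\ast}$ to be, and this is where the ``obstacle'' you flag becomes a genuine gap in your write-up.

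You set $T_{\ast}$ to be the maximal time in the $L^{\infty}\cap\dot H^{1}$ class and then need the $L^{4}$ restart from $t_{0}$ to produce a solution that lives in $L^{\infty}\cap\dot H^{1}$ past $T_{\ast}$; otherwise maximality in that class forces nothing. You correctly identify this as the main obstacle, but as stated it is unresolved, and resolving it amounts to proving that the $L^{4}$ mild solution automatically has $\nabla w\in L^{2}$—which is precisely the content of Theorem~4.5, one step downstream. So your argument for Proposition~4.4 alone is circular.

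The paper sidesteps this entirely. It takes $T_{\ast}$ to be a putative $L^{4}$ blow-up time and runs the contradiction purely in the $L^{4}$ framework: Corollary~A.2 applies directly, and the $L^{4}(0,T_{0};L^{4})$ bound from Lemma~4.3 is the whole energy input. This yields $w\in C([0,T_{0}];L^{4})$ with no class-compatibility issue. The passage to $L^{\infty}$ is then a separate, elementary step: the $L^{4}$ local theory (Lemma~A.1) gives $(t-t_{0})^{1/2}\nabla w\in C([t_{0},t_{0}+T_{1}];L^{4})$ with $T_{1}$ uniformly bounded below once $\|w\|_{L^{4}}$ is globally controlled, so $\nabla w\in C([0,T_{0}];L^{4})$, and the Sobolev inequality $\|w\|_{\infty}\le C\|w\|_{4}^{1/2}\|\nabla w\|_{4}^{1/2}$ finishes. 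No $\dot H^{1}$ information is used for Proposition~4.4; that is deferred to Theorem~4.5.

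Your plan becomes correct if you reframe it: run the contradiction on the $L^{4}$ maximal time rather than the $L^{\infty}\cap\dot H^{1}$ one, and then bootstrap to $L^{\infty}$ via Sobolev using the gradient control the $L^{4}$ theory already supplies. That dissolves the obstacle you were worried about. (Minor: the $L^{4}(0,T_{0};L^{4})$ bound is Lemma~4.3 in the paper, not 4.4.)
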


\vspace{5pt}

\begin{proof}
We observe that the mild solution $w$ is locally bounded in $L^{4}$ by (4.1). We shall show the global bound  

\begin{align*}
w\in C([0,T_0]; L^{4}).    \tag{4.5}
\end{align*}\\
The global bound (4.5) implies that $w\in C([0,T_0]; L^{\infty})$. In fact, by the local solvability of the perturbed system on $L^{4}$ (Lemma A.1), for each $t_0\in (0,T_0)$, we have 

\begin{align*}
(t-t_0)^{\frac{1}{2}}\nabla w\in C([t_0,t_0+T_1]; L^{4}),  \tag{4.6}
\end{align*} \\
for $T_{1}\geq \varepsilon_4 K_{4}^{-4}$ with the constants

\begin{align*}
K_{4}&=||w||_{L^{4}}(t_0)+N^{\frac{1}{2}}(t_0),\\
N(t_0)&=\sup_{t_0< t \leq T_0}\Big\{||v||_{L^{\infty}\cap \h}(t)+(t-t_0)^{\frac{1}{2}}||\nabla v||_{L^{\infty}}(t)  \Big\}.
\end{align*}\\
The constant $N(t_0)$ is bounded for $t_0\in (0,T_0]$ by (3.2). The global bound (4.5) implies that $K_4$ is bounded for all $t_0\in (0,T_0]$. Thus the constant $T_1$ is uniformly estimated from below for $t_0\in (0,T_0]$. Since $\nabla w$ is bounded in $L^{4}$ near $t=0$ by (4.1), it follows from (4.6) that 
 
\begin{align*}
\nabla w\in C([0,T_0]; L^{4}).
\end{align*}\\
By the Sobolev inequality \cite[Lemma 3.1.4]{Lunardi}, we estimate 

\begin{align*}
||w||_{\infty}\leq C_{s}||w||_{4}^{\frac{1}{2}}||\nabla w||_{4}^{\frac{1}{2}}.
\end{align*}\\
Thus, $w\in C([0,T_0]; L^{\infty})$ follows.\\

We prove (4.5). Suppose that there exists some $T_*\in (0,T_0]$ such that $w$ blows up at $t=T_{*}$ on $L^{4}$. Then, applying Corollary A.2 implies that 

\begin{align*}
||w||_{4}(t)+N^{\frac{1}{2}}(t)\geq \frac{\varepsilon_4'}{(T_*-t)^{\frac{1}{4}}}\quad \textrm{for}\ t<T_*,
\end{align*}\\
with some constant $\varepsilon_4'$. Since $N(t)$ is bounded for all $t\in [0,T_0]$ by (3.2), we have

\begin{align*}
\int_{0}^{T_{*}}||w||_{4}^{4}\dd t=\infty.
\end{align*}\\
This contradicts $w\in L^{4}(0,T_0; L^{4})$ by Lemma 4.3. We reached a contradiction. The proof is complete. 
\end{proof}

\vspace{5pt}

It remains to show that $\nabla w$ is globally bounded in $L^{2}$.

\vspace{5pt}

%thm4.5
\begin{thm}
The mild solution of (3.1) for $w_0=0$ is globally bounded in $L^{\infty}\cap H^{1}$, i.e., $w\in C([0,T_0]; L^{\infty}\cap H^{1})$.
\end{thm}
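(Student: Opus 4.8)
The plan is to upgrade the already-established facts $w\in C([0,T_0];L^\infty)$ (Proposition~4.4) and $\nabla w\in C([0,T]; L^2)$ on subintervals (Theorem~3.1) to the global bound $\nabla w\in C([0,T_0]; L^2)$. The global energy estimate (4.4) only gives $\nabla w\in L^2(0,T_0; L^2)$ in the integrated-in-time sense, not a pointwise-in-$t$ bound, so the missing ingredient is a differential inequality for $\|\nabla w\|_{L^2}^2$ that does not blow up in finite time. Since $w$ satisfies (3.1) on $L^2$ for a.e.\ $t$ with $\partial_t w,\nabla^2 w\in L^s(0,T_0;L^2)$ by Lemma~4.1, I would test (3.1) against $-\Delta w$ (equivalently $Aw$, using that $w$ vanishes on $\partial\Omega$ and $\mathbb{P}$ is self-adjoint) to get, for a.e.\ $t$,
\begin{align*}
\frac{1}{2}\frac{\dd}{\dd t}\|\nabla w\|_{L^2}^2+\|\Delta w\|_{L^2}^2
=\int_\Omega (\tilde w\cdot\nabla\tilde w)\cdot\Delta w\,\dd x,\qquad \tilde w=w+v.
\end{align*}

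The right-hand side splits into the four terms $\int(w\cdot\nabla w)\cdot\Delta w$, $\int(w\cdot\nabla v)\cdot\Delta w$, $\int(v\cdot\nabla w)\cdot\Delta w$, $\int(v\cdot\nabla v)\cdot\Delta w$. Each is estimated by H\"older and then absorbed into $\tfrac{1}{2}\|\Delta w\|_{L^2}^2$ via Young's inequality, using the two-dimensional Ladyzhenskaya inequality $\|f\|_{L^4}^2\le C\|f\|_{L^2}\|\nabla f\|_{L^2}$ (already invoked as (3.8)) and, where a gradient in $L^4$ is needed, the interpolation $\|\nabla w\|_{L^4}^2\le C\|\nabla w\|_{L^2}\|\Delta w\|_{L^2}$. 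For instance $|\int(w\cdot\nabla w)\cdot\Delta w|\le \|w\|_{L^4}\|\nabla w\|_{L^4}\|\Delta w\|_{L^2}\le C\|w\|_{L^4}\|\nabla w\|_{L^2}^{1/2}\|\Delta w\|_{L^2}^{3/2}$, which after Young gives a contribution $\le \tfrac{1}{8}\|\Delta w\|_{L^2}^2 + C\|w\|_{L^4}^4\|\nabla w\|_{L^2}^2$; the terms involving $v$ are handled the same way using $\|v\|_{L^\infty}, \|\nabla v\|_{L^2}\le N$ and $\|\nabla v\|_{L^\infty}\le N t^{-1/2}$ from (3.2). The outcome is a differential inequality of the form
\begin{align*}
\frac{\dd}{\dd t}\|\nabla w\|_{L^2}^2 \le a(t)\,\|\nabla w\|_{L^2}^2 + b(t),
\end{align*}
where $a(t)$ is a polynomial expression in $\|w\|_{L^\infty}$ (or $\|w\|_{L^4}$), $N$, and $t^{-1/2}$, and $b(t)$ collects the purely-$v$ source terms; crucially $a\in L^1(0,T_0)$ because $\|w\|_{L^\infty}\in C([0,T_0])$ by Proposition~4.4 and $t^{-1/2}\in L^1(0,T_0)$, and likewise $b\in L^1(0,T_0)$. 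Grönwall's lemma then yields $\sup_{0<t\le T_0}\|\nabla w\|_{L^2}^2<\infty$, and since (4.4) gives $\|w\|_{L^2}$ bounded too, $w$ is globally bounded in $H^1$; combined with Proposition~4.4 and the continuity in (4.1), this gives $w\in C([0,T_0]; L^\infty\cap H^1)$.

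The main obstacle is making the testing against $-\Delta w$ rigorous: $w$ is only a mild solution with $\partial_t w,\nabla^2 w\in L^s(0,T_0;L^2)$, so the pairing $\langle \partial_t w,\Delta w\rangle$ and the integration by parts $\int\partial_t w\cdot(-\Delta w)=\tfrac12\tfrac{\dd}{\dd t}\|\nabla w\|_{L^2}^2$ must be justified near $t=0$ (where the regularity degenerates) and the identity $\int (v\cdot\nabla w)\cdot\Delta w\,\dd x$ must be treated carefully since $\D\,v=0$ but $v\notin H^1$ uniformly. I would circumvent this by working on $[t_0,T_0]$ for $t_0>0$, where Lemma~4.1/Theorem~3.1 give enough smoothness, deriving the bound with a constant independent of $t_0$ (using the local energy estimate at time $t_0$ to control $\|\nabla w\|_{L^2}(t_0)$, which is bounded by (4.1)), and then letting $t_0\to 0$; alternatively one passes the estimate through the Galerkin/iteration approximants $w_j$ of Theorem~3.1. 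The integrability of $a(t)$ at $t=0$ — i.e.\ that all the $v$-induced coefficients are no worse than $t^{-1/2}$ — is exactly what (3.2) and the bound $N$ are designed to supply, so no logarithmic or double-exponential loss occurs here, unlike in the whole-space $L^\infty$ estimates discussed in the introduction.
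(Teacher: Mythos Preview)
Your approach is correct but differs substantially from the paper's. You derive a differential inequality for $\|\nabla w\|_{L^2}^2$ by testing the equation against $Aw$ and close it with Gr\"onwall, using the global $L^\infty$ bound from Proposition~4.4 to make the coefficient $a(t)$ integrable (in fact bounded, since the pairings $(w\cdot\nabla v,Aw)$ and $(v\cdot\nabla v,Aw)$ can be handled via $\|w\|_\infty\|\nabla v\|_2\|Aw\|_2$ and $\|v\|_\infty\|\nabla v\|_2\|Aw\|_2$, so the $t^{-1/2}$ from $\|\nabla v\|_\infty$ is not actually needed). The paper instead stays entirely within the mild-solution framework: it sets $J(T)=\sup_{0<t\le T}\|\nabla w\|_2$ and estimates $\|\nabla w(t)\|_2$ directly from the Duhamel integral, splitting $\tilde w\cdot\nabla\tilde w$ into $\tilde w\cdot\nabla w$ and $\tilde w\cdot\nabla v$. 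For the dangerous first piece it interpolates the two semigroup bounds $\|\nabla S(\tau)\p\D(\tilde w w)\|_2\lesssim\tau^{-1}\|\tilde w\|_\infty\|w\|_2$ and $\|\nabla S(\tau)\p(\tilde w\cdot\nabla w)\|_2\lesssim\tau^{-1/2}\|\tilde w\|_\infty\|\nabla w\|_2$ to produce an integrable $\tau^{-3/4}$ kernel, yielding an algebraic inequality $J\le C(R+N)R^{1/2}J^{1/2}T_0^{1/4}+C(R+N)NT_0^{1/2}$ with $R=\sup(\|w\|_2+\|w\|_\infty)$, which is then closed by Young's inequality.

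The trade-off: your route is the familiar higher-order energy estimate and is conceptually straightforward once Proposition~4.4 is in hand, but it requires you to justify the identity $(\partial_t w,Aw)=\tfrac12\tfrac{\dd}{\dd t}\|\nabla w\|_2^2$ and the testing itself---your continuation argument on $[t_0,T_0]$ handles this, though note that you should test against $Aw$ rather than $-\Delta w$ (the pressure pairing $\int\nabla\pi\cdot(-\Delta w)$ need not vanish in a domain with boundary). The paper's integral-equation route sidesteps all of this regularity bookkeeping, since every term is already defined at the level of mild solutions and the semigroup estimates (3.4), (4.7); this is cleaner within the semigroup setup adopted throughout the article, at the cost of the slightly less transparent interpolation trick.
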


\vspace{5pt}

\begin{proof}
We set

\begin{align*}
J(T)&=\sup_{0<t\leq T}||\nabla w||_{2}(t).
\end{align*}\\
The constant $J=J(T)$ is finite up to some $T\in (0,T_0]$ by Theorem 3.1. We show that 

\begin{align*}
J(T_0)<\infty.
\end{align*}\\
Since $w$ is globally bounded on $L^{2}\cap L^{\infty}$ by Lemma 4.3 and Proposition 4.4, we have 

\begin{align*}
R(T_0)=\sup_{0<t\leq T_0}\{||w||_{2}+||w||_{\infty}\}<\infty.
\end{align*}\\
We separate $\nabla w$ into two terms and estimate 

\begin{align*}
||\nabla w||_{2}
&\leq \int_{0}^{t}||\nabla S(t-s)\p \tilde{w}\cdot \nabla \tilde{w}||_{2}\dd s \\
&\leq \int_{0}^{t}||\nabla S(t-s)\p \tilde{w}\cdot \nabla w||_{2}\dd s+\int_{0}^{t}||\nabla S(t-s)\p \tilde{w}\cdot \nabla v||_{2}\dd s\\
&=:I+II.
\end{align*}\\
We estimate $I$. By (3.4) and a duality, we have 

\begin{align*}
||S(t)\p\D\ F||_{2}\leq \frac{C_2}{t^{\frac{1}{2}}}||F||_{2} \quad \textrm{for}\ F\in H^{1},\ t\leq T_0. \tag{4.7}
\end{align*}\\
It follows from (3.4) and (4.7) that 

\begin{align*}
||\nabla S(t-s)\p\tilde{w}\cdot \nabla w||_{2}
&=\Bigg\|\nabla S\Bigg(\frac{t-s}{2}\Bigg)S\Bigg(\frac{t-s}{2}\Bigg)\p\D\ (\tilde{w}w)\Bigg\|_{2} \\
&\leq \frac{\sqrt{2}C_2}{(t-s)^{\frac{1}{2}}}\Bigg\|S\Bigg(\frac{t-s}{2}\Bigg)\p\D\ (\tilde{w}w)\Bigg\|_{2}\\
&\leq \frac{2C_2^{2}}{t-s}||\tilde{w}||_{\infty}||w||_{2}. 
\end{align*}\\
Since the right-hand side is not integrable near $s=t$, we use the constant $J=J(T)$ and estimate $I$. Applying (3.4) implies that

\begin{align*}
||\nabla S(t-s)\p\tilde{w}\cdot \nabla w||_{2}
\leq \frac{C_2}{(t-s)^{\frac{1}{2}}}||\tilde{w}||_{\infty}||\nabla w||_{2}.
\end{align*}\\
By combining the above two estimates, we have   

\begin{align*}
||\nabla S(t-s)\p\tilde{w}\cdot \nabla w||_{2}
&=||\nabla S(t-s)\p\tilde{w}\cdot \nabla w||_{2}^{\frac{1}{2}}
||\nabla S(t-s)\p\tilde{w}\cdot \nabla w||_{2}^{\frac{1}{2}} \\
&\leq \Bigg(\frac{2C_2^{2}}{t-s}||\tilde{w}||_{\infty}||w||_{2}   \Bigg)^{\frac{1}{2}}\Bigg(\frac{C_2}{(t-s)^{\frac{1}{2}}}||\tilde{w}||_{\infty}||\nabla w||_{2}   \Bigg)^{\frac{1}{2}}\\
&\leq \frac{\sqrt{2}C_{2}^{\frac{3}{2}}}{(t-s)^{\frac{3}{4}}}||\tilde{w}||_{\infty}||w||_{2}^{\frac{1}{2}}||\nabla w||_{2}^{\frac{1}{2}}.
\end{align*}\\
We estimate 

\begin{align*}
I
&\leq \int_{0}^{t}\frac{\sqrt{2}C_{2}^{\frac{3}{2}}}{(t-s)^{\frac{3}{4}}}||\tilde{w}||_{\infty}||w||_{2}^{\frac{1}{2}}||\nabla w||_{2}^{\frac{1}{2}}\dd s\\
&\leq \sqrt{2}C_{2}^{\frac{3}{2}}(R+N)R^{\frac{1}{2}}J^{\frac{1}{2}}\int_{0}^{t}\frac{\dd s}{(t-s)^{\frac{3}{4}}}\\
&=4\sqrt{2}C_2^{\frac{3}{2}}(R+N)R^{\frac{1}{2}}J^{\frac{1}{2}}T^{\frac{1}{4}},
\end{align*}\\
with the constant $N$ in Theorem 3.1. Here, the time variables are  suppressed for $J=J(T)$ and $R=R(T_0)$. We estimate $II$ by (3.4). It follows that

\begin{align*}
II
&\leq \int_{0}^{t}\frac{C_2}{(t-s)^{\frac{1}{2}}}||\tilde{w}\cdot \nabla v||_{2}\dd s \\
&\leq 2C_2 (R+N)NT^{\frac{1}{2}}.
\end{align*}\\
By combining the estimates for $I$ and $II$, we obtain 

\begin{align*}
J\leq C(R+N)(R^{\frac{1}{2}}J^{\frac{1}{2}}T^{\frac{1}{4}}_{0}+NT^{\frac{1}{2}}_{0}   ),
\end{align*}\\
with some constant $C$. Applying Young's inequality implies that 

\begin{align*}
J\leq C^{2}(R+N)^{2}RT^{\frac{1}{2}}_{0}+2C(R+N)NT^{\frac{1}{2}}_{0}.
\end{align*}\\
Thus $J=J(T)$ is bounded for all $T\leq  T_{0}$. We proved that $\nabla w\in C([0,T_0]; L^{2})$. The proof is now complete.
\end{proof}

\vspace{10pt}

%セクション5
\section{Mild solutions on $L^{\infty}\cap \dot{H}^{1}$}

\vspace{10pt}

Now, we construct global solutions of (1.1) for $u_0\in L^{\infty}_{\sigma}\cap \h_{0}$. It suffices to show a global bound for local solutions on $L^{\infty}\cap \h$

\vspace{10pt}

\begin{proof}[Proof of Theorem 1.1]
We apply Theorem 3.1 for $v=0$ and observe that for $u_0\in L^{\infty}_{\sigma}\cap \H$, there exists 

\begin{align*}
T\geq \frac{\varepsilon}{||u_0||_{L^{\infty}\cap \dot{H}^{1}}^{2}}
\end{align*}\\
and a unique mild solution $u\in C_{w}([0,T]; L^{\infty})$ of (1.1) satisfying $t^{1/2}\nabla u\in C_{w}([0,T]; L^{\infty})$ and $\nabla u\in C_{w}([0,T]; L^{2})$. We set 

\begin{align*}
u(t)&=S(t)u_{0}-\int_{0}^{t}S(t-s)\p (u\cdot \nabla u)(s) \dd s\\
&=: u_1+u_2.
\end{align*}\\
We observe that $v=u_1$ satisfies the condition (3.2) by Corollary 2.2 and $u_2$ is a mild solution of (3.1) for $w_0=0$, i.e., 
\begin{align*}
u_2(t)&=-\int_{0}^{t}S(t-s)\p (u\cdot \nabla u)(s) \dd s,\quad u=u_2+v.
\end{align*}\\
Since $u_2$ is globally bounded in $L^{\infty}\cap H^{1}$ by Theorem 4.5, the local solution $u$ is globally bounded $L^{\infty}\cap \dot{H}^{1}$ and extendable for all $t>0$. The proof is now complete.
\end{proof}

\vspace{10pt}

\begin{rems}
\noindent
(i) (Regularity)
The mild solutions constructed in Theorem 1.1 are H\"older continuous up to first derivatives, i.e., $\nabla u\in C^{\mu,\mu/2} (\overline{\Omega}\times [\delta,T])$ for $\delta,T>0$ and $\mu\in (0,1)$. Since the regularizing estimates 

\begin{equation*}
\sup_{0< t\leq T_1}\left\{\big\|u\big\|_{L^{\infty}}(t)+t^{\frac{1}{2}}\big\|\nabla u\big\|_{L^{\infty}}(t)+t^{\frac{1+\beta}{2}}\Big[\nabla u\Big]^{(\beta)}_{\Omega}(t)  \right\}\leq C_1\big\|u_0\big\|_{L^{\infty}},       
\end{equation*}
\begin{equation*}
\sup_{x\in \Omega}\left\{\Big[u\Big]^{(\gamma)}_{[\delta,T_1]}(x)+\Big[\nabla u\Big]^{(\frac{\gamma}{2})}_{[\delta,T_1]}(x) \right\}\leq C_2\big\|u_0\big\|_{L^{\infty}},   
\end{equation*}\\
hold for $\beta, \gamma\in (0,1)$, $\delta\in (0,T_1)$ and $T_1\geq \varepsilon/||u_0||_{\infty}^{2}$ with some constants $\varepsilon$, $C_1$ and $C_2$ \cite{A5}, applying the above estimates for $u\in C_{w}([0,\infty); L^{\infty})$ and each $t>0$ implies the H\"older continuity of mild solutions. Here, $[\cdot]_{\Omega}^{(\beta)}$ denotes the $\beta$-th H\"older semi-norm in $\overline{\Omega}$. The mild solution satisfies (1.1) in the sense that 

\begin{equation*}
\int_{0}^{\infty}\int_{\Omega}\big(u\cdot(\partial_{t}\varphi+\Delta \varphi)+u u: \nabla\varphi  \big)\dd x\dd t=-\int_{\Omega}u_0(x)\cdot \varphi (x,0)\dd x     
\end{equation*} \\
for all $\varphi\in C^{\infty}_{c,\sigma}(\Omega\times [0,\infty))$. We observe that the second derivatives are in $L^{2}_{\textrm{loc}}(\overline{\Omega})$ for a.e. $t\in (0,\infty)$. We set $u=u_1+u_2$ as in the proof of Theorem 1.1 and invoke that $u_1\in C^{2+\mu,1+\mu/2}(\overline{\Omega}\times [\delta,T])$ satisfies the Stokes equations in a classical sense \cite{AG2}. Since $u\in C_{w}([0,\infty); L^{\infty})$ satisfies $\nabla u\in C_{w}([0,\infty); L^{2})$, the maximal regularity estimate \cite{deSimon64} (\cite{Sohr})  implies that $\partial_t u_2, \nabla^{2} u_2\in L^{s}(0,T; L^{2})$ for $s\in (1,\infty)$ and each $T>0$. Thus $u_2$ satisfies the inhomogeneous Stokes equations

\begin{align*}
\partial_t u_2-\Delta u_2+\nabla p_2=-u\cdot \nabla u \quad \textrm{on}\ L^{2}
\end{align*}\\
for a.e. $t\in (0,\infty)$ with the associated pressure $p_2$. Thus $\partial_t u$, $\nabla^{2}u\in L^{2}_{\textrm{loc}}(\overline{\Omega})$ for a.e. $t\in (0,\infty)$.

\noindent
(ii) (Associated pressure) We multiply the projection $\mathbb{Q}=I-\mathbb{P}: L^{2}\longrightarrow L^{2}$ by (1.1) and observe that the associated pressure of a mild solution $u$ is expressed by 

\begin{align*}
\nabla p=\mathbb{Q}\Delta u-\mathbb{Q}u\cdot \nabla u.
\end{align*}\\
Since the mild solution $u\in C_{w}([0,\infty); L^{\infty})$ satisfies $\nabla u\in C_{w}([0,\infty); L^{2})$, the second term is defined as an element of $L^{2}$ for each $t>0$. Although the first term may not be defined for mild solutions on $L^{\infty}\cap \h$, we are able to define the associated pressure by using the solution operator $\mathbb{K}: L^{\infty}_{\textrm{tan}}(\partial\Omega)\longrightarrow L^{\infty}_{d}(\Omega)$ of the homogeneous Neumann problem 

\begin{align*}
\Delta q&=0\quad \textrm{in}\ \Omega\\
\frac{\partial q}{\partial n}&=\textrm{div}_{\partial\Omega}\ W\quad \textrm{on}\ \partial\Omega.
\end{align*}\\
Here, $L^{\infty}_{\textrm{tan}}(\partial\Omega)$ denotes the space of all bounded tangential vector fields on $\partial\Omega$ and $L^{\infty}_{d}(\Omega)$ denotes the space of all locally integrable functions $f$ in $\Omega$ such that $df\in L^{\infty}(\Omega)$ for $d(x)=\inf_{y\in \partial\Omega}|x-y|$ and $x\in \Omega$. The symbol $\textrm{div}_{\partial\Omega}$ denotes the surface divergence on $\partial\Omega$. Note that $\Delta u\cdot n=\textrm{div}_{\partial\Omega}\ W$ for $W=\omega n^{\perp}$ and $\omega=\partial_1 u^{2}-\partial_2 u^{1}$, where $n=(n^{1},n^{2})$ denotes the unit outward normal vector field on $\partial\Omega$ and $n^{\perp}=(n^{2},-n^{1})$. The associated pressure of (1.1) is then expressed by 

\begin{align*}
\nabla p=\mathbb{K}W-\mathbb{Q}u\cdot \nabla u
\end{align*}\\
for the mild solution $u$ on $L^{\infty}\cap \h$. See also \cite[Remarks 1.2 (v)]{A5}.
\end{rems}

\vspace{10pt}

\section{Asymptotic behavior at the space infinity}

\vspace{10pt}

We prove Theorem 1.3. We show that the Stokes flow converges to a constant as $|x|\to\infty$ for asymptotically constant initial data $u_0\in BUC_{\sigma}\cap \h_{0}$. We subtract initial data from the Stokes flow and estimate a spatial decay of the difference by using a fractional power of the Stokes operator on $L^{2}$. After the proof of Theorem 1.3, we remark on a half space case (Remarks 6.4). 

\vspace{10pt}

%lem 6.1
\begin{lem}
Let $u_0\in BUC_{\sigma}\cap \h_{0}$ satisfy

\begin{align*}
\lim_{R\to\infty}\sup_{|x|\geq R}|u(x)-u_{\infty}|=0
\end{align*}\\
for some constant $u_{\infty}\in \mathbb{R}^{2}$. Then, 

\begin{align*}
\lim_{R\to\infty}\sup_{|x|\geq R}|S(t)u_0-u_{\infty}|=0\quad \textrm{for each}\ t\geq 0.
\end{align*}\\
\end{lem}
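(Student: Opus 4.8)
The plan is to reduce the statement to a decay estimate for the difference $S(t)u_0-u_0$, and to obtain that decay from the fact that this difference lies in $L^{2}(\Omega)\cap BUC(\overline{\Omega})$. Since $u_0\to u_\infty$ as $|x|\to\infty$ by hypothesis (which also settles the case $t=0$), it suffices to prove, for each fixed $t>0$, that $\sup_{|x|\ge R}|S(t)u_0-u_0|\to 0$ as $R\to\infty$. The final elementary observation I would use is that a bounded, uniformly continuous function on $\overline\Omega$ with a finite $L^2$ norm must vanish at spatial infinity: otherwise there would be a sequence of mutually well-separated points $x_k\to\infty$ on which the function stays bounded away from zero, and uniform continuity would force an infinite $L^2$ mass.

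The core step is the $L^2$ bound
\[
\|S(t)u_0-u_0\|_{L^{2}(\Omega)}\le C\,t^{1/2}\,\|u_0\|_{L^{\infty}\cap\dot{H}^{1}}.
\]
I would first take the approximating sequence $\{u_{0,m}\}\subset L^{\infty}_\sigma\cap H^{1}_0$ with compact support in $\overline\Omega$ given by Lemma 2.4, so that $\|u_{0,m}\|_{L^{\infty}\cap\dot{H}^{1}}\le C\|u_0\|_{L^{\infty}\cap\dot{H}^{1}}$ and $u_{0,m}\to u_0$ a.e. By Proposition 2.3 each $u_{0,m}$ belongs to $H^{1}_{0,\sigma}=D(A^{1/2})$ with $\|A^{1/2}u_{0,m}\|_{L^{2}}=\|\nabla u_{0,m}\|_{L^{2}}$. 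Since $S$ is a $C_0$-analytic semigroup on $L^{2}_\sigma$ and $A^{1/2}$ commutes with $S$, one has $S(t)u_{0,m}-u_{0,m}=-\int_0^{t}AS(s)u_{0,m}\,\dd s$ in $L^{2}$, together with
\[
\|AS(s)u_{0,m}\|_{L^{2}}=\|A^{1/2}S(s)\,A^{1/2}u_{0,m}\|_{L^{2}}\le \frac{C}{s^{1/2}}\,\|\nabla u_{0,m}\|_{L^{2}}\le \frac{C'}{s^{1/2}}\,\|u_0\|_{L^{\infty}\cap\dot{H}^{1}},
\]
and integrating over $(0,t)$ yields the asserted bound for each $u_{0,m}$, uniformly in $m$. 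Then I would pass to the limit: along a subsequence $S(t)u_{0,m}\to S(t)u_0$ locally uniformly in $\overline\Omega$ (as in the proof of Theorem 2.1, \cite{AG2}), while $u_{0,m}\to u_0$ a.e., so $S(t)u_{0,m}-u_{0,m}\to S(t)u_0-u_0$ a.e., and Fatou's lemma transfers the uniform $L^{2}$ bound to $S(t)u_0-u_0$.

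For the $BUC$ regularity, the estimate (2.2) gives $t^{1/2}\|\nabla S(t)u_0\|_{L^{\infty}}\le C\|u_0\|_{L^{\infty}\cap\dot{H}^{1}}$, so for $t>0$ the flow $S(t)u_0$ is Lipschitz on $\overline\Omega$, hence lies in $BUC(\overline\Omega)$; combined with $u_0\in BUC_\sigma$ this puts $S(t)u_0-u_0$ in $BUC(\overline\Omega)\cap L^{2}(\Omega)$, and by the observation above it vanishes at infinity. The triangle inequality $\sup_{|x|\ge R}|S(t)u_0-u_\infty|\le \sup_{|x|\ge R}|S(t)u_0-u_0|+\sup_{|x|\ge R}|u_0-u_\infty|$ then finishes the proof.

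The step I expect to require the most care is the justification of the $L^{2}$ bound: the identity $S(t)g-g=-\int_0^{t}AS(s)g\,\dd s$ for $g\in D(A^{1/2})$, the analyticity estimate $\|A^{1/2}S(s)\|_{L^{2}\to L^{2}}\le C s^{-1/2}$, the consistency of the $L^{2}$- and $L^{\infty}$-realizations of the Stokes semigroup on the compactly supported data $u_{0,m}$, and the Fatou passage along the a.e.-convergent subsequence. All of these are standard semigroup facts, and once $S(t)u_0-u_0\in L^{2}\cap BUC$ is established the conclusion is immediate.
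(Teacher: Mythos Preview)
Your proposal is correct and follows essentially the same route as the paper: the paper isolates your $L^{2}$ bound for $S(t)u_0-u_0$ as Proposition~6.2 (proved exactly as you indicate, via the fractional-power estimate on compactly supported approximants from Lemma~2.4 and a limit passage), isolates the ``$BUC\cap L^{2}\Rightarrow$ decay at infinity'' observation as Proposition~6.3, and then concludes Lemma~6.1 by the same triangle inequality. Your added justification that $S(t)u_0\in BUC$ via the Lipschitz bound from (2.2) is a detail the paper leaves implicit.
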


\vspace{5pt}

We observe that $S(t)u_0-u_0$ decays as $|x|\to\infty$.

\vspace{5pt}

%prop6.2
\begin{prop}
The estimate
 
\begin{align*}
||S(t)u_0-u_0||_{L^{2}}\leq Ct^{\frac{1}{2}}||u_0||_{L^{\infty}\cap \h}  \tag{6.1}
\end{align*}\\
holds for $u_0\in L^{\infty}_{\sigma}\cap \h_{0}$ and $t>0$ with some constant $C$. If in addition that $u_0\in BUC_{\sigma}$, then $S(t)u_0-u_0\in BUC\cap L^{2}$ for each $t\geq 0$.
\end{prop}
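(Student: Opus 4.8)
The plan is to estimate $S(t)u_0 - u_0$ using the representation via the Stokes operator and fractional powers on $L^2$. The key observation is that $u_0 \in L^\infty_\sigma \cap \h_0$ is locally in $H^1_{0,\sigma} = D(A^{1/2})$ only after cutting off — but globally the Dirichlet seminorm is controlled, so by the approximation argument in Lemma 2.4 we may first prove the estimate for compactly supported $v_0 \in L^\infty_\sigma \cap H^1_0$ and then pass to the limit. For such $v_0 \in D(A^{1/2})$, I would write
\begin{align*}
S(t)v_0 - v_0 = \int_0^t \frac{d}{ds} S(s) v_0 \, ds = -\int_0^t A S(s) v_0 \, ds = -\int_0^t A^{1/2} S(s) A^{1/2} v_0 \, ds,
\end{align*}
and then use the analyticity bound $\|A^{1/2} S(s)\|_{L^2 \to L^2} \le C s^{-1/2}$ from \cite[IV.1.5]{Sohr} together with $\|A^{1/2} v_0\|_{L^2} = |v_0|_{\h}$ to get
\begin{align*}
\|S(t)v_0 - v_0\|_{L^2} \le \int_0^t \frac{C}{s^{1/2}} |v_0|_{\h} \, ds = 2C t^{1/2} |v_0|_{\h} \le 2C t^{1/2} \|v_0\|_{L^\infty \cap \h}.
\end{align*}

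For general $u_0 \in L^\infty_\sigma \cap \h_0$, I would take the sequence $\{v_{0,m}\}$ from Lemma 2.4 with $\|v_{0,m}\|_{L^\infty\cap\h} \le C\|u_0\|_{L^\infty\cap\h}$ and $v_{0,m} \to u_0$ a.e., apply the estimate just obtained to each $v_{0,m}$, and pass to the limit: $S(t)v_{0,m} \to S(t)u_0$ locally uniformly by \cite{AG2}, and combined with the uniform $L^\infty$-bound this gives a.e. convergence, so by Fatou's lemma $\|S(t)u_0 - u_0\|_{L^2} \le \liminf_m \|S(t)v_{0,m} - v_{0,m}\|_{L^2} \le C t^{1/2}\|u_0\|_{L^\infty\cap\h}$. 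This yields (6.1).

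For the additional claim, suppose further $u_0 \in BUC_\sigma$. Then $S(t)u_0 \in BUC$ for each $t \ge 0$ since the Stokes semigroup preserves $BUC_\sigma$ (it is a bounded analytic semigroup on $L^\infty_\sigma$ that is strongly continuous on $BUC_\sigma$; cf.\ \cite{AG2}), hence $S(t)u_0 - u_0 \in BUC$; combined with $S(t)u_0 - u_0 \in L^2$ from (6.1) we conclude $S(t)u_0 - u_0 \in BUC \cap L^2$. The main obstacle is the first step: $u_0$ itself need not lie in $D(A^{1/2})$ because the Dirichlet integral on an exterior domain does not control the $L^2$-norm, so the identity $S(t)v_0 - v_0 = -\int_0^t A^{1/2}S(s)A^{1/2}v_0\,ds$ is only legitimate after the compact-support reduction — getting the approximation and the passage to the limit to cooperate (uniform bounds plus a.e.\ convergence plus Fatou) is where the care is needed, but this is precisely what Lemma 2.4 and the local uniform convergence from \cite{AG2} are designed to supply.
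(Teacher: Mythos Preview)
Your proposal is correct and follows essentially the same route as the paper: first establish (6.1) for compactly supported $v_0\in L^{\infty}_{\sigma}\cap H^{1}_{0}\subset D(A^{1/2})$ via the integral representation $S(t)v_0-v_0=-\int_0^t AS(s)v_0\,ds$ and the fractional-power bound $\|A^{1/2}S(s)\|_{L^2\to L^2}\le Cs^{-1/2}$, then pass to general $u_0\in L^{\infty}_{\sigma}\cap\h_0$ by the approximation of Lemma 2.4. Your treatment of the limiting step (local uniform convergence plus Fatou) and of the additional $BUC$ claim is slightly more explicit than the paper's, but the argument is the same.
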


\vspace{5pt}

\begin{proof}
We first show (6.1) for $u_0\in L^{\infty}_{\sigma}\cap H^{1}_{0} $ with compact support in $\overline{\Omega}$. By the analyticity of $S(t)$ and an estimate of a fractional power on $L^{2}$ \cite[III. 2.2.1 Lemma]{Sohr}, we have 

\begin{align*}
||S(t)u_0-u_0||_{L^{2}}
&\leq \int_{0}^{t}||AS(s)u_0||_{L^{2}}\dd s\\
&\leq \int_{0}^{t}||A^{\frac{1}{2}}S(s)A^{\frac{1}{2}}u_0||_{L^{2}}\dd s\\
&\leq \int_{0}^{t}\frac{1}{s^{\frac{1}{2}}}||A^{\frac{1}{2}}u_0||_{L^{2}}\dd s= 2t^{\frac{1}{2}}||\nabla u_0||_{L^{2}}\quad \textrm{for}\ t>0.
\end{align*}\\
Thus (6.1) holds. For general $u_0\in L^{\infty}_{\sigma}\cap \h_0$, we take a sequence $\{u_{0,m}\}\subset L^{\infty}_{\sigma}\cap\h_0$ with compact support in $\overline{\Omega}$ satisfying (2.4) by Lemma 2.4 and obtain (6.1) by approximation.
\end{proof}

\vspace{5pt}

Proposition 6.2 implies the pointwise convergence $S(t)u_0-u_0\to 0$ as $|x|\to\infty$. We prepare the following Proposition 6.3 and then give a proof for Lemma 6.1.

\vspace{5pt}

%prop6.3
\begin{prop}
Assume that $f\in BUC\cap L^{2}(\Omega)$. Then, 

\begin{align*}
\lim_{R\to\infty}\sup_{|x|\geq R}|f(x)|=0.
\end{align*}\\
\end{prop}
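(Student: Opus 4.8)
The plan is to argue by contradiction: I would suppose the conclusion fails and show that this forces $f\notin L^{2}(\Omega)$. So assume there exist $\varepsilon>0$ and a sequence $\{x_k\}\subset\Omega$ with $|x_k|\to\infty$ and $|f(x_k)|\geq\varepsilon$ for every $k$.

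The first step uses uniform continuity: since $f\in BUC(\overline{\Omega})$, choose $\delta>0$ so that $|f(y)-f(x)|<\varepsilon/2$ whenever $x,y\in\overline{\Omega}$ and $|x-y|<\delta$; then $|f(y)|\geq\varepsilon/2$ on $B(x_k,\delta)\cap\overline{\Omega}$ for each $k$. The second step is bookkeeping on the geometry of the exterior domain: as $\Omega^{c}$ is bounded, fix $R_0$ with $\Omega^{c}\subset B_0(R_0)$; discarding finitely many terms we may assume $|x_k|\geq R_0+\delta$, so that $B(x_k,\delta)\subset\Omega$, and passing to a subsequence with, say, $|x_{k+1}|\geq|x_k|+2\delta$ makes the balls $\{B(x_k,\delta)\}_k$ pairwise disjoint.

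The final step is to sum the $L^{2}$-mass:
\[
\int_{\Omega}|f|^{2}\,\dd x\;\geq\;\sum_{k}\int_{B(x_k,\delta)}|f|^{2}\,\dd x\;\geq\;\sum_{k}\pi\delta^{2}\Big(\frac{\varepsilon}{2}\Big)^{2}\;=\;\infty,
\]
which contradicts $f\in L^{2}(\Omega)$ and proves the proposition.

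I do not expect a genuine obstacle here; the only point that requires a little care is ensuring that the balls $B(x_k,\delta)$ both lie inside $\Omega$ and can be taken mutually disjoint, and both are immediate consequences of $|x_k|\to\infty$ together with the boundedness of $\Omega^{c}$.
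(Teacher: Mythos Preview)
Your proof is correct and follows essentially the same contradiction argument as the paper: extract a sequence $x_k$ with $|x_k|\to\infty$ and $|f(x_k)|$ bounded below, use uniform continuity to obtain a uniform lower bound on small balls, arrange the balls to be disjoint, and sum to contradict $f\in L^{2}$. If anything, you are slightly more explicit than the paper in verifying that the balls lie inside $\Omega$ and are pairwise disjoint.
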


\begin{proof}
Suppose on the contrary that 

\begin{align*}
\overline{\lim_{R\to\infty}}\sup_{|x|\geq R}|f(x)|=\delta>0.
\end{align*}\\
Then, there exists a sequence $\{R_m\}$ such that 

\begin{align*}
\sup_{|x|\geq R_{m}}|f(x)|\geq \frac{\delta}{2},
\end{align*}\\
and $R_m\to\infty$ as $m\to\infty$. We then take $x_m\in \Omega$ such that $|x_m|\geq R_m$ and 

\begin{align*}
|f(x_m)|
&\geq \frac{1}{2}\sup_{|x|\geq R_{m}}|f(x)| \\
&\geq \frac{\delta}{4}.
\end{align*}\\
Since $f$ is uniformly continuous in $\overline{\Omega}$, for arbitrary $\varepsilon>0$ there exists $\eta>0$ such that 

\begin{align*}
|f(x)-f(y)|\leq \varepsilon\quad \textrm{for}\  x, y\in \Omega\quad  \textrm{satisfying}\ |x-y|\leq \eta.
\end{align*}\\
We take $\varepsilon=\delta/8$ and estimate 

\begin{align*}
|f(y)|
&\geq \big| |f(y)-f(x_m)|-|f(x_m)|\big|\\
&\geq \frac{\delta}{8}\quad \textrm{for}\ y\in B_{x_m}(\eta).
\end{align*}\\
The constant $\eta$ depends on $\delta$ and is independent of $m\geq 1$. By choosing a subsequence if necessary, we may assume that $B_{x_m}(\eta)\cap  B_{x_l}(\eta)\neq \emptyset$ for $m\neq l$. It follows that
 
\begin{align*}
\infty>\int_{\Omega}|f|^{2}\dd x
&\geq \sum_{m=1}^{N}\int_{B_{x_m}(\eta)}|f|^{2}\dd x\\
&\geq \Bigg(\frac{\delta}{8}\Bigg)^{2}|B_{0}(\eta)|N\to\infty\quad \textrm{as}\ N\to\infty.
\end{align*}\\
We reached a contradiction. The proof is complete. 
\end{proof}

\vspace{5pt}

\begin{proof}[Proof of Lemma 6.1]
For $u_0\in BUC_{\sigma}\cap \h_0$ satisfying $u_{0}\to u_{\infty}$ as $|x|\to\infty$, we observe that $S(t)u_0-u_0\in BUC\cap L^{2}$ by Proposition 6.2. Applying Proposition 6.3 implies that 

\begin{align*}
\lim_{R\to\infty}\sup_{|x|\geq R}|S(t)u_0-u_{0}|=0\quad \textrm{for each}\ t\geq 0.
\end{align*}\\
It follows that 
\begin{align*}
\sup_{|x|\geq R}|S(t)u_0-u_{\infty}|
&\leq \sup_{|x|\geq R}|S(t)u_0-u_{0}|
+\sup_{|x|\geq R}|u_0-u_{\infty}|\\
&\to 0\quad \textrm{as}\ R\to\infty.
\end{align*}
\end{proof}

\vspace{5pt}

We now complete: 

\vspace{5pt}

\begin{proof}[Proof of Theorem 1.3]
We set 

\begin{align*}
u(t)&=S(t)u_{0}-\int_{0}^{t}S(t-s)\p (u\cdot \nabla u)(s) \dd s\\
&=: u_1+u_2\quad \textrm{for}\ t\geq 0.
\end{align*}\\
It suffices to show that 

\begin{align*}
u_2(\cdot,t)\in BUC\cap L^{2}(\Omega)\quad \textrm{for each}\ t\geq 0.   \tag{6.2}
\end{align*}\\
In fact, the condition (6.2) implies that 

\begin{align*}
\lim_{R\to\infty}\sup_{|x|\geq R}|u_{2}(x,t)|=0,
\end{align*}\\
by Proposition 6.3. It follows from Lemma 6.1 that

\begin{align*}
\sup_{|x|\geq R}|u(x,t)-u_{\infty}|
&\leq \sup_{|x|\geq R}|u_1(x,t)-u_{\infty}|
+\sup_{|x|\geq R}|u_2(x,t)|\\
&\to 0\quad \textrm{as}\ R\to\infty.
\end{align*}\\
Thus the assertion of Theorem 1.3 holds. It remains to show (6.2). We set $f=-u\cdot \nabla u$ and observe that 

\begin{align*}
f\in L^{\infty}(0,T_0; L^{2})\quad \textrm{for}\ T_0>0,
\end{align*}\\
by $u\in C([0,T_0]; BUC)$ and $\nabla u\in C_{w}([0,T_0]; L^{2})$. By the same way as the proof of Lemma 4.2, applying (4.3) implies that 

\begin{align*}
||u_2||_{L^{p}}\leq Ct^{\frac{1}{2}+\frac{1}{p}}||f||_{L^{\infty}(0,T_0; L^{2})}
\end{align*}\\
for $2\leq p\leq \infty$. Moreover, we have $\nabla u_2\in C([0,T_0]; L^{q})$ for $2\leq q<\infty$. Thus $u_2\in BUC\cap L^{2}$ for each $t\geq 0$. We proved (6.2). The proof is now complete.
\end{proof}

\vspace{10pt}

%Remark6.5
\begin{rems}

\noindent
(A half space case) (i) The statement of Theorem 1.1 is valid also for a half space. Since the Stokes semigroup is a bounded analytic semigroup on $L^{\infty}_{\sigma}\cap \h_{0}$ as in Remarks 2.6 (ii), the proof of Theorem 1.1 is valid also for a half space without modifications. 

\noindent
(ii) The space $\dot{H}^{1}_{0}(\mathbb{R}^{2}_{+})$ consists of decaying functions as $|x|\to\infty$ in the sense that 

\begin{align*}
\lim_{r\to\infty}\int_{0}^{\pi}|u(re_{r})|^{2}\dd \theta=0.  \tag{6.3}
\end{align*}\\
See \cite[Proposition 5.6]{GWittwer}. For the reader's convenience, we outline the proof given in \cite{GWittwer}. We first observe that $u/x_2$ belongs to $L^{2}(\mathbb{R}^{2}_{+})$ by the Hardy's inequality \cite[2.7.1]{Mazya}

\begin{align*}
\Big\|\frac{u}{x_2}\Big\|_{L^{2}(\mathbb{R}^{2}_{+})}\leq 2||\nabla u||_{L^{2}(\mathbb{R}^{2}_{+})}
\end{align*}\\
for $u\in \dot{H}^{1}_{0}(\mathbb{R}^{2}_{+})$. We set $D_{r}=(B_{0}(2r)\backslash \overline{B_{0}(r)})\cap \mathbb{R}^{2}_{+}$ for $r>0$. Since the $L^{2}$-trace on $\partial B_{0}(1)\cap \mathbb{R}^{2}_{+}$ is estimated by the $H^{1}$-norm in $D_1$, by dilation we have 

\begin{align*}
\frac{1}{r^{\frac{1}{2}}}||u||_{L^{2}(\partial B_{0}(r)\cap \mathbb{R}^{2}_{+}  )}\leq C\Big(\frac{1}{r}||u||_{L^{2}( D_r)}+||\nabla u||_{L^{2}( D_r)}\Big).
\end{align*} \\
Since $x\in D_r$ satisfies $r<|x|<2r$ and 

\begin{align*}
\frac{1}{r}\leq \frac{2}{|x|}\leq \frac{2}{x_2},
\end{align*}\\
we estimate

\begin{align*}
\frac{1}{r}||u||_{L^{2}(D_r)}\leq 2\Big\|\frac{u}{x_2}\Big\|_{L^{2}(D_r)}.
\end{align*}\\
Hence we have 

\begin{align*}
\lim_{r\to\infty}\frac{1}{r^{\frac{1}{2}}}||u||_{L^{2}(\partial B_{0}(r)\cap \mathbb{R}^{2}_{+})}=0.
\end{align*}\\
Transforming this by the polar coordinate implies (6.3). 
\end{rems}

\vspace{15pt}
\appendix

\section{Local solvability of the perturbed system on $L^{p}$}

\vspace{10pt}

In Appendix A, we prove local solvability of the perturbed system (3.1) on $L^{p}$ ($p>2$) and the blow-up estimate used in Section 4. The proof is by an iterative argument and parallel to that of Theorem 3.1.

\vspace{5pt}

%lem A.1
\begin{lem}
Let $v$ be a solenoidal vector field in $\Omega$ satisfying (3.2). For $p\in (2,\infty)$, there exists a constant $\varepsilon_{p}$ such that for $w_0\in L^{p}_{\sigma}$, there exists $T\geq \varepsilon_{p} K^{-2p/(p-2)}_{p}$ for 

\begin{equation*}
\begin{aligned}
K_{p}&=||w_0||_{L^{p}}+N^{1-\frac{2}{p}},\\
N&=\sup_{0< t\leq T_0}\Big\{||v||_{L^{\infty}\cap \h}(t)+t^{1/2}||\nabla v||_{L^{\infty}}(t)  \Big\},
\end{aligned}
\tag{A.1}
\end{equation*}\\
and a unique mild solution $w\in C([0,T]; L^{p})$ of (3.1) satisfying $t^{1/2}\nabla w\in C([0,T]; L^{p})$.
\end{lem}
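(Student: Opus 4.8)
The plan is to run a contraction-mapping argument on $L^p$ that mirrors the $L^\infty\cap\dot H^1$ proof of Theorem 3.1, but is slightly simpler because we work with a single Lebesgue exponent rather than an intersection space. First I would record the relevant $L^p$-smoothing estimates for the Stokes semigroup: from the analyticity of $S(t)$ on $L^p_\sigma$ we have $t^{|k|/2}\|\partial_x^k S(t)g\|_{L^p}\le C\|g\|_{L^p}$ for $|k|\le 1$, and by duality and composition the estimate $\|S(t)\p\,\D F\|_{L^p}\le C t^{-1/2}\|F\|_{L^p}$ for $F\in L^p$ (so that $\nabla S(t)\p\,\D F$ costs $t^{-1}$). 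These are the $L^p$-analogues of (3.4)--(3.5) and they follow from the cited analytic-semigroup theory plus the $L^\infty$-bound of \cite{AG2}; I would also need a Gagliardo--Nirenberg-type interpolation $\|\varphi\|_{L^\infty}\le C\|\varphi\|_{L^p}^{1-2/p}\|\nabla\varphi\|_{L^p}^{2/p}$ in $\mathbb R^2$, extended to $\Omega$ by extension.

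Next I would set up the iteration
\begin{equation*}
w_{j+1}(t)=S(t)w_0-\int_0^t S(t-s)\p\,\D(\tilde w_j\otimes\tilde w_j)\dd s,\qquad \tilde w_j=w_j+v,\quad w_1=S(t)w_0,
\end{equation*}
and the norm
\begin{equation*}
K_j=\sup_{0<t\le T}\bigl\{\|w_j\|_{L^p}(t)+t^{1/2}\|\nabla w_j\|_{L^p}(t)\bigr\}.
\end{equation*}
To close the estimate I would write $\tilde w_j\cdot\nabla\tilde w_j=\D(\tilde w_j\otimes\tilde w_j)$ and bound $\|\tilde w_j\otimes\tilde w_j\|_{L^p}\le\|\tilde w_j\|_{L^\infty}\|\tilde w_j\|_{L^p}$; the $L^\infty$-factor is controlled by the perturbation norm $N$ together with the interpolation inequality applied to $w_j$, which produces a factor $t^{-1/p}$. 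Carrying the $t$-powers through the time integral $\int_0^t (t-s)^{-1/2}s^{-1/p}\dd s = B(1/2,1-1/p)\,t^{1/2-1/p}$ (for the $L^p$-norm of $w_{j+1}$) and the analogous integral with an extra $(t-s)^{-1/2}$ for the $\nabla$-part gives, after using $s\le T\le 1$ and $\tilde K_j\le K_j+N$,
\begin{equation*}
K_{j+1}\le C_1\|w_0\|_{L^p}+C_2 N T^{1/2}+C_3(K_j+N)^2 T^{1/2-1/p}.
\end{equation*}
Since $N^{1-2/p}\le K_p$, choosing $T\le \varepsilon_p K_p^{-2p/(p-2)}$ makes $T^{1/2-1/p}\,K_p^{2}\lesssim 1$ and $T^{1/2}N\lesssim T^{1/2-1/p}N^{1-2/p}\cdot N^{2/p}\ldots$; more cleanly, one absorbs both the linear-in-$N$ term and the quadratic term into $CK_p$ by the same threshold on $T$, yielding the uniform bound $K_j\le CK_p$ for all $j$. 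Strong continuity on $L^p$ at $t=0$ holds here (unlike the $L^\infty$ case) because $S(t)$ is a $C_0$-semigroup on $L^p_\sigma$ and $\int_0^t(t-s)^{-1/2}s^{-1/p}\dd s\to0$.

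The contraction step is identical in spirit: with $\rho_j=w_j-w_{j-1}$ one uses $\tilde w_j\otimes\tilde w_j-\tilde w_{j-1}\otimes\tilde w_{j-1}=\tilde w_j\otimes\rho_j+\rho_j\otimes\tilde w_{j-1}$, the same semigroup estimates, and the bound $\tilde K_j\le(2C+1)K_p$ from the previous step, to get $L_{j+1}\le C_4(\tilde K_j+\tilde K_{j-1})T^{1/2-1/p}L_j$; shrinking $\varepsilon_p$ forces $L_{j+1}\le\tfrac12 L_j$, so $\{w_j\}$ converges in $C([0,T];L^p)$ with $t^{1/2}\nabla w_j$ converging in $C([0,T];L^p)$, and the limit solves the integral equation by passing $j\to\infty$ in the iteration. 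Uniqueness follows by the same difference estimate applied to two solutions on a possibly smaller interval and a bootstrap. The main obstacle I anticipate is purely bookkeeping: getting the exponent $2p/(p-2)$ exactly right from the scaling $\|f_\lambda\|_{L^p}=\lambda^{1-2/p}\|f\|_{L^p}$ and matching it with the time-integral exponent $\tfrac12-\tfrac1p$, and making sure the mixed linear term $NT^{1/2}$ (coming from the $v\cdot\nabla v$ forcing) is dominated — this needs the observation that $T^{1/2}\le T^{1/2-1/p}$ for $T\le1$ together with $N^{1-2/p}\le K_p$, so no new smallness beyond the stated $T$-threshold is required.
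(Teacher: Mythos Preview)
Your iteration scheme and overall strategy match the paper's, but there is a genuine gap in the key estimate. You write $\tilde w_j\cdot\nabla\tilde w_j=\D(\tilde w_j\otimes\tilde w_j)$ and then bound $\|\tilde w_j\otimes\tilde w_j\|_{L^p}\le\|\tilde w_j\|_{L^\infty}\|\tilde w_j\|_{L^p}$. But $\tilde w_j=w_j+v$ and the hypothesis (3.2) gives only $v\in L^\infty\cap\dot H^1$; the perturbation $v$ is \emph{not} in $L^p(\Omega)$ for any finite $p$ (this is the whole point of the non-decaying setting). Hence $\|\tilde w_j\|_{L^p}=\infty$ and your divergence-form estimate collapses on the $v\otimes v$ piece. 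The same issue contaminates the contraction step. Your stated term $C_2NT^{1/2}$ for the $v\cdot\nabla v$ forcing is also off: that term is quadratic in $N$, not linear.

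The paper avoids this by expanding $\tilde w_j\cdot\nabla\tilde w_j$ into the four pieces $w_j\cdot\nabla w_j$, $v\cdot\nabla w_j$, $w_j\cdot\nabla v$, $v\cdot\nabla v$ and treating them \emph{asymmetrically}. The first two are handled purely in $L^p$ via (A.3) and the Sobolev bound $\|w_j\|_{L^\infty}\le Cs^{-1/p}K_j$. For the last two, where $\nabla v$ appears, the paper uses that $\nabla v\in L^2$ and invokes an additional $L^2\to L^p$ smoothing estimate $\|\partial_x^kS(t)\p f\|_{L^p}\le C t^{-(1/2-1/p+|k|/2)}\|f\|_{L^2}$ (their (A.4)), which you have not listed among your tools. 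This produces the three separate powers $T^{1/2-1/p}$, $T^{1/2}$, $T^{1/2+1/p}$ attached respectively to $K_j^2$, $NK_j$, $N^2$; all three are then absorbed using $N\le K_p^{p/(p-2)}$. You can alternatively fix your argument by interpolating $\|\nabla v\|_{L^p}\le\|\nabla v\|_{L^2}^{2/p}\|\nabla v\|_{L^\infty}^{1-2/p}\le N s^{-1/2+1/p}$ and staying in $L^p$, but either way you must split off the $v$-terms rather than bounding $\|\tilde w_j\|_{L^p}$.
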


\vspace{5pt}

%prop A.2
\begin{cor}
Let $w\in C([0,T_*); L^{p})$ be a mild solution of (3.1) for $w_0\in L^{p}_{\sigma}$. Suppose that $w$ blows up at $t=T_*$ on $L^{p}$. Then, 

\begin{align*}
||w||_{L^{p}}(t)+N(t)^{1-\frac{2}{p}}\geq \frac{\varepsilon_p'}{(T_*-t)^{\frac{1}{2}-\frac{1}{p} }} \quad \textrm{for}\ t<T_*,   \tag{A.2}
\end{align*}\\
where  

\begin{align*}
N(t)=\sup_{t< s\leq T_0}\Big\{||v||_{L^{\infty}\cap \h}(s)+(s-t)^{1/2}||\nabla v||_{L^{\infty}}(s)  \Big\},  
\end{align*}\\
and $\varepsilon_{p}^{'}=\varepsilon_{p}^{1/2-1/p}$. 
\end{cor}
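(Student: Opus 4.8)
The plan is to prove the blow-up rate by contradiction, using Lemma A.1 as a local existence statement with quantitative lower bound on the existence time, exactly as one derives the classical $\|u\|_{L^p}\gtrsim (T_*-t)^{-(1/2-1/p)}$ blow-up criterion for Navier--Stokes. Suppose $w$ blows up at $t=T_*$ on $L^p$, and suppose toward a contradiction that for some $t_0<T_*$ we have
\begin{align*}
\|w\|_{L^p}(t_0)+N(t_0)^{1-\frac{2}{p}}<\frac{\varepsilon_p'}{(T_*-t_0)^{\frac{1}{2}-\frac{1}{p}}},
\end{align*}
with $\varepsilon_p'=\varepsilon_p^{1/2-1/p}$ to be pinned down below. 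The idea is that $w(\cdot,t_0)\in L^p_\sigma$ can be taken as new initial data for the perturbed system (with the perturbation $v$ restarted at time $t_0$ in the sense encoded by $N(t_0)$), so Lemma A.1 produces a mild solution on $[t_0,t_0+T]$ with $T\geq \varepsilon_p K_p^{-2p/(p-2)}$ where $K_p=\|w(\cdot,t_0)\|_{L^p}+N(t_0)^{1-2/p}$.

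First I would check that the translated system genuinely fits the hypotheses of Lemma A.1: setting $\tilde v(x,s)=v(x,t_0+s)$ and $\tilde w_0=w(\cdot,t_0)$, the function $\tilde v$ is solenoidal, vanishes on $\partial\Omega$, and satisfies (3.2) on $[0,T_0-t_0]$ because the weak-star continuity in (3.2) is preserved under time translation and the singular weight $s^{1/2}$ only becomes milder; moreover the relevant $N$ for the translated problem is precisely $N(t_0)$ as defined in the statement. By uniqueness in $C([0,T]; L^p)$ (the uniqueness clause of Lemma A.1, matching the uniqueness of the mild solution $w$), this new solution coincides with $w$ on the overlap of time intervals, hence $w$ extends as an $L^p$-mild solution past $t=t_0+T$. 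Therefore, if $t_0+T\geq T_*$ we contradict that $w$ blows up at $T_*$.

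So the remaining task is the arithmetic: we need $T\geq T_*-t_0$, i.e. $\varepsilon_p K_p^{-2p/(p-2)}\geq T_*-t_0$, i.e. $K_p\leq \varepsilon_p^{(p-2)/(2p)}(T_*-t_0)^{-(p-2)/(2p)}=\varepsilon_p^{1/2-1/p}(T_*-t_0)^{-(1/2-1/p)}$. This is exactly the negation of (A.2) at $t=t_0$ with $\varepsilon_p'=\varepsilon_p^{1/2-1/p}$, since $K_p=\|w\|_{L^p}(t_0)+N(t_0)^{1-2/p}$. Hence the assumed failure of (A.2) forces the solution to extend to time $T_*$, contradicting blow-up; this proves (A.2) for every $t<T_*$.

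The step I expect to require the most care is the restarting argument, i.e. verifying that $w$ is, on any subinterval $[t_0,T_*)$, genuinely a mild solution of the perturbed system with the time-translated perturbation and initial datum $w(\cdot,t_0)$, so that Lemma A.1's uniqueness applies and the extension is legitimate rather than merely formal. One must make sure the integral equation for $w$ on $[t_0,t]$ has the form
\begin{align*}
w(t)=S(t-t_0)w(t_0)-\int_{t_0}^{t}S(t-s)\mathbb{P}\big(\tilde w\cdot\nabla\tilde w\big)(s)\,\dd s,\qquad \tilde w=w+v,
\end{align*}
which follows from the semigroup property $S(t-s)=S(t-t_0)S(t_0-s)$ applied to (1.2) split at $s=t_0$, together with $w\in C([0,T_*);L^p)$ from Lemma A.1 (so $w(t_0)\in L^p_\sigma$ is a legitimate datum). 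Everything else is the routine exponent bookkeeping above.
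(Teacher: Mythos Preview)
Your proof is correct and follows the same approach as the paper: restart the system at an arbitrary time $t_0<T_*$, invoke the quantitative lower bound $T\geq \varepsilon_p K_p^{-2p/(p-2)}$ from Lemma~A.1, and use that the solution cannot extend to $T_*$ to force the inequality $K_p\geq \varepsilon_p'(T_*-t_0)^{-(1/2-1/p)}$. The paper states this directly in two lines (the existence time must satisfy $T_1<T_*-t_0$, hence the bound), whereas you phrase it as a contradiction and spell out the restarting verification; the content is the same.
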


\begin{proof}
We fix an arbitrary $t_0\in (0,T_*)$. By Lemma A.1, there exists $T_1\geq \varepsilon_p K_{p}^{-2p/(p-2)}$ for $K_{p}=||w||_{p}(t_0)+N(t_0)^{1-2/p}$ and a unique mild solution $w$ in $[t_0,t_0+T_1]$. Since $w$ blows up at $t=T_*$, the existence time $T_1$ is smaller than $T_{*}-t_0$ and (A.2) holds.
\end{proof}

\vspace{10pt}

In order to prove Lemma A.1, we prepare $L^{p}$-estimates of the Stokes semigroup.

\vspace{10pt}

%prop A.3
\begin{prop}
Let $p\in [2,\infty)$ and $T_0>0$. There exist constants $C_1$ and $C_2$ such that the estimates

\begin{align*}
t^{\frac{|k|}{2}}||\partial_x^{k}S(t)\p w_0||_{p}&\leq C_1||w_0||_{p},  \tag{A.3} \\
t^{\frac{1}{2}-\frac{1}{p}+\frac{|k|}{2}}||\partial_x^{k}S(t)\p f||_{p}&\leq C_2||f||_{2},     \tag{A.4}
\end{align*}\\
hold for $w_0\in L^{p}$, $f\in L^{2}$, $|k|\leq 1$ and $0< t\leq T_0$.
\end{prop}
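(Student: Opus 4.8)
The plan is to reduce everything to two endpoint cases, $p=2$ and $p=\infty$, and recover the intermediate $p$ by interpolation, exactly as was done for Proposition 4.2. For (A.3): when $p=2$ this is just the analyticity of the Stokes semigroup on $L^{2}_{\sigma}$ combined with the Helmholtz projection being bounded on $L^{2}$, i.e.\ the estimate (3.4) applied to $\mathbb{P}w_0$. When $p=\infty$ the case $k=0$ is the $L^{\infty}$-estimate $\|S(t)\mathbb{P}w_0\|_{\infty}\leq C\|\mathbb{P}w_0\|_{\infty}$ of \cite{AG2}; but since $w_0$ need only be in $L^{p}$ one should first note $\mathbb{P}w_0\in L^{p}_{\sigma}$ with $\|\mathbb{P}w_0\|_p\leq C\|w_0\|_p$ (boundedness of $\mathbb{P}$ on $L^{p}$ for $1<p<\infty$ on exterior domains, \cite{Sohr}), and then use the $L^{p}$--$L^{\infty}$ smoothing of $S(t)$ on $L^{p}_{\sigma}$. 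For $|k|=1$, $p=\infty$, factor $S(t)=S(t/2)S(t/2)$, apply the gradient $L^{\infty}$-bound $\|\nabla S(t/2)\cdot\|_{\infty}\le Ct^{-1/2}\|\cdot\|_{\infty}$ from \cite{AG2} to the already-smoothed $S(t/2)\mathbb{P}w_0$; interpolating the resulting $p=2$ and $p=\infty$ bounds gives (A.3) for general $p$.

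For (A.4): the $p=2$ case is precisely (3.4) for $|k|\le 1$, since $\|\mathbb{P}f\|_2\le\|f\|_2$. The $p=\infty$, $k=0$ case is (3.6), which already gives $t^{1/2}\|S(t)\mathbb{P}f\|_{\infty}\le C\|f\|_2$. For $|k|=1$, $p=\infty$, again split $S(t)=S(t/2)S(t/2)$, use $\|\nabla S(t/2)\cdot\|_{\infty}\le Ct^{-1/2}\|\cdot\|_{\infty}$ together with (3.6) applied to $S(t/2)\mathbb{P}f$, obtaining $t\,\|\nabla S(t)\mathbb{P}f\|_{\infty}\le C\|f\|_2$. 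Interpolating between the $L^{2}$ and $L^{\infty}$ estimates for each fixed $k$ yields the stated exponent $\tfrac12-\tfrac1p+\tfrac{|k|}{2}$, and one checks the power of $t$ matches by linearity of the interpolation on the exponents.

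The only mildly delicate point is bookkeeping the time-exponents through the interpolation so they come out as claimed, and making sure the Helmholtz projection is applied on a scale where it is bounded (which forces one to land on $\mathbb{P}f\in L^{2}$ first and then smooth, rather than projecting in $L^{\infty}$). Since $t$ ranges over the bounded interval $(0,T_0]$, all constants may absorb powers of $T_0$, so there is no issue combining smoothing estimates with slightly different homogeneities. I expect no genuine obstacle here: every ingredient ($L^{2}$ analyticity (3.4), the $L^{\infty}$ bounds of \cite{AG2}, the estimate (3.6), boundedness of $\mathbb{P}$, Riesz--Thorin interpolation) is already available, and the proof parallels Proposition 4.2 essentially verbatim with $p$ in place of $4$ and $\infty$.
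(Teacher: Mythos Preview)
Your plan for (A.4) is correct and essentially matches the paper: interpolate $\|S(t)\mathbb{P}f\|_{\infty}$ from (3.6) against $\|S(t)\mathbb{P}f\|_{2}$ from (3.4) for $k=0$, then handle $|k|=1$ by factoring $S(t)=S(t/2)S(t/2)$. The paper phrases that last step as ``by combining (A.3)'' (i.e.\ the $L^{p}$ gradient bound already in hand) rather than via the $L^{\infty}$ gradient estimate, but this is the same maneuver in a different order.

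For (A.3), however, your interpolation plan has a gap. You correctly observe that $\mathbb{P}$ is unbounded on $L^{\infty}$, so there is no endpoint estimate $\|S(t)\mathbb{P}w_{0}\|_{\infty}\le C\|w_{0}\|_{\infty}$. Your proposed workaround---project in $L^{p}$ first and then apply $L^{p}$--$L^{\infty}$ smoothing---produces $\|S(t)\mathbb{P}w_{0}\|_{\infty}\le Ct^{-1/p}\|w_{0}\|_{p}$, which is not an $L^{\infty}\to L^{\infty}$ bound and therefore cannot serve as a Riesz--Thorin endpoint against the $L^{2}$ estimate. The analogy with Proposition~4.2 breaks down precisely here: in (4.3) the right-hand side is always $\|f\|_{2}$, so the $p=\infty$ endpoint really is (3.6); in (A.3) the right-hand side is $\|w_{0}\|_{p}$, so a genuine $L^{\infty}$ input would be required, and none is available. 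The paper sidesteps the issue entirely and proves (A.3) in one line: the Stokes semigroup is analytic on $L^{p}_{\sigma}$ for every $1<p<\infty$ (Solonnikov, Giga), which gives $t^{|k|/2}\|\partial_{x}^{k}S(t)v\|_{p}\le C\|v\|_{p}$ for $v\in L^{p}_{\sigma}$, and $\mathbb{P}$ is bounded on $L^{p}$. No interpolation is needed and the $L^{\infty}$ endpoint is never touched.
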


\vspace{5pt}

\begin{proof}
The estimate (A.3) follows from the analyticity of the Stokes semigroup \cite{Sl76}, \cite{G81} and the boundedness of the Helmholtz projection on $L^{p}$ \cite{SiSo}. It follows from (3.6) and (A.3) that 

\begin{align*}
||S(t)\p f||_{p}
&\leq ||S(t)\p f||_{\infty}^{\sigma}||S(t)\p f||_{2}^{1-\sigma} \\
&\leq \frac{C}{t^{\frac{\sigma}{2}}}||f||_{2},
\end{align*}\\
for $\sigma=1-2/p$. By combining (A.3), we obtain (A.4) for $|k|\leq 1$.
\end{proof}

\vspace{5pt}

\begin{proof}[Proof of Lemma A.1]
We set the sequence $\{w_{j}\}$ by 

\begin{align*}
w_{j+1}&=S(t)w_0-\int_{0}^{t}S(t-s)\p (w_{j}\cdot \nabla w_{j}+v\cdot \nabla w_{j}+w_j\cdot \nabla v+v\cdot \nabla v)\dd s,\\
w_{1}&=S(t)w_0,
\end{align*}\\
and the constants

\begin{align*}
K_{j}=\sup_{0\leq t\leq T_0}\Big\{||w_j||_{p}(t)+t^{\frac{1}{2}}||\nabla w_j||_{p}(t)   \Big\},\quad \textrm{for}\ p>2.
\end{align*}\\
By the Sobolev inequality \cite[Lemma 3.1.4]{Lunardi}, we estimate 

\begin{align*}
||w_j||_{\infty}
\leq \frac{C_3}{t^{\frac{1}{p}}}K_j\quad \textrm{for}\ 0<t\leq T.   \tag{A.5}
\end{align*}\\
We may assume that $T\leq T_0$. We first show that
 
\begin{align*}
K_j\leq 2C_1 K_0  \quad \textrm{for}\ j=1,2,\cdots,  \tag{A.6}
\end{align*}\\
and 

\begin{equation*}
\begin{aligned}
&T\leq \varepsilon_0K_{0}^{-\frac{2p}{p-2}},\\
&K_0=||w_0||_{p}+N^{1-\frac{2}{p}},
\end{aligned}
\tag{A.7}
\end{equation*}\\
with some constant $\varepsilon_0$. We apply (A.3) and (A.5) to estimate 

\begin{align*}
||S(t-s)\p (w_j\cdot \nabla w_j)||_{p}
&\leq C_1||w_j\cdot \nabla w_{j}||_{p} \\
&\leq \frac{C_1C_3}{s^{\frac{1}{p}+\frac{1}{2}}}K_j^{2}, \\
||S(t-s)\p (v\cdot \nabla w_j)||_{p}
&\leq C_1||v\cdot \nabla w_{j}||_{p} \\
&\leq \frac{C_1}{s^{\frac{1}{2}}}N K_j.
\end{align*}\\
It follows from (A.4) and (A.5) that 

\begin{align*}
||S(t-s)\p (w_j\cdot \nabla v)||_{p}
&\leq \frac{C_2}{(t-s)^{\frac{1}{2}-\frac{1}{p}}}||w_j\cdot \nabla v||_{2} \\
&\leq \frac{C_2C_3}{(t-s)^{\frac{1}{2}-\frac{1}{p}} s^{\frac{1}{p}}}K_j N, \\
||S(t-s)\p (v\cdot \nabla v)||_{p}
&\leq \frac{C_2}{(t-s)^{\frac{1}{2}-\frac{1}{p}  }}N^{2}. 
\end{align*}\\
By integrating the above estimates in $(0,t)$, we obtain

\begin{align*}
||w_{j+1}||_{p}\leq ||S(t)w_0||_{p}+C(K_j^{2}T^{\frac{1}{2}-\frac{1}{p}}+NK_{j}T^{\frac{1}{2}}+N^{2}T^{\frac{1}{2}+\frac{1}{p}}).
\end{align*}\\
By a similar way, we estimate 

\begin{align*}
t^{\frac{1}{2}}||\nabla w_{j+1}||_{p}\leq t^{\frac{1}{2}} ||\nabla S(t)w_0||_{p}+C'(K_j^{2}T^{\frac{1}{2}-\frac{1}{p}}+NK_{j}T^{\frac{1}{2}}+N^{2}T^{\frac{1}{2}+\frac{1}{p}}   ).
\end{align*}\\
It follows from (A.3) and (A.7) that 

\begin{align*}
K_{j+1}\leq C_1K_0+C_4(K_j^{2}T^{\frac{1}{2}-\frac{1}{p}}+K_0^{\frac{p}{p-2}}K_{j}T^{\frac{1}{2}}+K_0^{\frac{2p}{p-2}}T^{\frac{1}{2}+\frac{1}{p}}),
\end{align*} \\
with some constant $C_4$. We take a constant $\varepsilon_0>0$ and $T>0$ satisfying (A.7). It follows that

\begin{align*}
K_{j+1}
&\leq C_1K_0+C_4\Big(K_j^{2}K_0^{-1}(K_0T^{\frac{1}{2}-\frac{1}{p}})+K_j(K_0^{\frac{p}{p-2}}T^{\frac{1}{2}})+K_0(K_0^{\frac{p+2}{p-2}}T^{\frac{1}{2}+\frac{1}{p} })\Big) \\
&\leq C_1K_0+C_4(K_j^{2}K_0^{-1}\varepsilon_{0}^{\frac{1}{2}-\frac{1}{p}}+K_j\varepsilon_0^{\frac{1}{2}}+K_0\varepsilon_0^{\frac{1}{2}+\frac{1}{p}}  ).
\end{align*}\\
By a fundamental calculation, we obtain (A.6) with some constant $\varepsilon_0$ satisfying 

\begin{align*}
C_4(4C_1^{2}{\varepsilon_0}^{\frac{1}{2}-\frac{1}{p}}+2C_1{\varepsilon_0}^{\frac{1}{2}}+{\varepsilon_0}^{\frac{1}{2}+\frac{1}{p}} )\leq C_1.
\end{align*}\\
Since $S(t)w_0$ is continuous on $L^{p}$ and $t^{1/2}\nabla S(t)w_0$ vanishes at time zero, $w_j \in C([0,T]; L^{p}_{\sigma})$ satisfies $t^{1/2}\nabla w_j\in C([0,T]; L^{p})$ and $t^{1/2}||\nabla w_{j}||_{p}\to 0$ as $t\to 0$.\\

We show that the sequence $\{w_j\}$ converges to a limit $w\in C([0,T]; L^{p}_{\sigma})$. We set 

\begin{align*}
&\rho_j=w_j-w_{j-1},\\
&L_{j}=\sup_{0\leq t\leq T}\Big\{||\rho_j||_{p}(t)+t^{\frac{1}{2}}||\nabla \rho_j||_{p}(t)   \Big\}.
\end{align*}\\
By the Sobolev inequality, we have 

\begin{align*}
||\rho_j||_{\infty}\leq \frac{C_{3}}{t^{\frac{1}{p}}}L_j\quad \textrm{for}\ 0<t\leq T.  \tag{A.8}
\end{align*}\\
By a similar way as the proof of Theorem 3.1, we estimate 

\begin{align*}
\rho_{j+1}
&=-\int_{0}^{t}S(t-s)\p(\tilde{w}_{j}\cdot \nabla \rho_j+\rho_j\cdot \nabla \tilde{w}_{j-1})\dd s\\
&=-\int_{0}^{t}S(t-s)\p(w_j\cdot \nabla \rho_j+v\cdot \nabla \rho_j+\rho_j\cdot \nabla w_{j-1}+\rho_j\cdot \nabla v)\dd s,
\end{align*}\\
where $\tilde{w}_{j}=w_j+v$. It follows from (A.3)-(A.5) and (A.8) that

\begin{align*}
||S(t-s)\p (w_j\cdot \nabla \rho_j)||_{p}
&\leq C_1 ||w_j\cdot \nabla \rho_j||_{p}\\
&\leq \frac{C_1C_3}{s^{\frac{1}{p}+\frac{1}{2}}}K_jL_j,\\
||S(t-s)\p (v \cdot \nabla \rho_j)||_{p}
&\leq \frac{C_1}{s^{\frac{1}{2}}}NL_j,\\
||S(t-s)\p (\rho_j\cdot \nabla w_{j-1})||_{p}
&\leq \frac{C_1C_3}{s^{\frac{1}{p}+\frac{1}{2}}}L_jK_{j-1},\\
||S(t-s)\p (\rho_j\cdot \nabla v)||_{p}
&\leq \frac{C_2C_3}{(t-s)^{\frac{1}{2}-\frac{1}{p}}s^{\frac{1}{p}} }L_jN.
\end{align*} \\
We integrate the above estimates in $(0,t)$ and obtain

\begin{align*}
||\rho_{j+1}||_{p}\leq C\big((K_j+K_{j-1})L_j T^{\frac{1}{2}-\frac{1}{p}}+NL_j T^{\frac{1}{2}}  \big).
\end{align*} \\
By a similar way, we estimate 

\begin{align*}
t^{\frac{1}{2}}||\nabla \rho_j||_{p}
\leq C' \big((K_j+K_{j-1})L_j T^{\frac{1}{2}-\frac{1}{p}}+NL_j T^{\frac{1}{2}}  \big).
\end{align*}  \\
By (A.6) and (A.7), we obtain 

\begin{align*}
L_{j+1}
\leq C_5 (K_0 T^{\frac{1}{2}-\frac{1}{p}}+K_0^{\frac{p}{p-2}}T^{\frac{1}{2}}  )L_j. 
\end{align*} \\
We set

\begin{align*}
K_{0}^{\frac{2p}{p-2}}T\leq \varepsilon_1
\end{align*}\\
and observe that 

\begin{align*}
L_{j+1}\leq \frac{1}{2}L_{j},
\end{align*}\\
for some constant $\varepsilon_1$ satisfying 

\begin{align*}
C_5(\varepsilon_1^{\frac{1}{2}-\frac{1}{p}}+\varepsilon_1^{\frac{1}{2}} )\leq \frac{1}{2}.
\end{align*}\\
Thus we have

\begin{align*}
L_{j+1}\leq \Big(\frac{1}{2}\Big)^{j}L_{1}\to0\quad \textrm{as}\ j\to\infty,
\end{align*}\\
for
 
\begin{align*}
T\leq \varepsilon_2 K_0^{-\frac{2p}{p-2}}\quad\textrm{and}\quad
\varepsilon_2=\min\{\varepsilon_0, \varepsilon_1\}.
\end{align*}\\
Thus the sequence $\{w_j\}\subset C([0,T]; L^{p}_{\sigma})$ converges to a limit $w\in C([0,T]; L^{p}_{\sigma})$ satisfying $t^{1/2}\nabla w\in C([0,T]; L^{p})$ and $t^{1/2}||\nabla w||_{p}\to 0$ as $t\to0$. The limit $w$ satisfies the integral equation and is unique. The existence time is estimated from below by $T\geq \varepsilon K_{0}^{-2p/(p-2)}$ for 

\begin{align*}
\varepsilon=\frac{\varepsilon_2}{2}.
\end{align*}\\
The proof is now complete.
\end{proof}

\vspace{10pt}

\section{A fractional power of the Stokes operator in a half space}

\vspace{10pt}

In Appendix B, we show $L^{p}$-estimates of a fractional power of the Stokes operator $A$ in a half space.

\vspace{5pt}

%B.1
\begin{lem}
Let $\Omega=\mathbb{R}^{n}_{+}$, $n\geq 2$. For $p\in (1,\infty)$, there exist constants $C_1$ and $C_2$ such that 

\begin{align*}
||\nabla u||_{p}&\leq C_1||A^{\frac{1}{2}}u||_{p},  \tag{B.1}\\
||A^{\frac{1}{2}}u||_{p}&\leq C_2||\nabla u||_{p},  \tag{B.2}
\end{align*}\\
hold for $u\in D(A^{1/2})$ and $D(A^{1/2})=W^{1,p}_{0,\sigma}$, where $W^{1,p}_{0,\sigma}$ denotes the $W^{1,p}$-closure of $C_{c,\sigma}^{\infty}$.
\end{lem}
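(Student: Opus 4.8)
The plan is to obtain (B.2) from (B.1) by a duality argument, as announced in Remarks 2.6 (ii). I will freely use (B.1) for \emph{both} exponents $p$ and its conjugate $p'$, as well as the identification $D(A^{1/2})=W^{1,q}_{0,\sigma}$; these are available for every $q\in(1,\infty)$ from \cite[Theorem 3.6]{BM88}. I will also use the standard facts that on $L^{q}_{\sigma}(\mathbb{R}^{n}_{+})$, $q\in(1,\infty)$, the Helmholtz projection $\p$ is bounded, $C_{c,\sigma}^{\infty}$ is dense, $A_{q}$ generates a bounded analytic semigroup, and the dual operator of $A_{q}$ on $L^{q'}_{\sigma}$ is $A_{q'}$; consequently $(A_{p}^{1/2})^{*}=A_{p'}^{1/2}$.

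I would first fix $u\in C_{c,\sigma}^{\infty}$ and pair $A_{p}^{1/2}u$ with an arbitrary $\phi\in C_{c,\sigma}^{\infty}$. Since $u,\phi\in D(A^{1/2})$, the adjoint relation gives $\langle A_{p}^{1/2}u,\phi\rangle=\langle u,A_{p'}^{1/2}\phi\rangle=\langle u,A_{p'}\psi\rangle$, where $\psi:=A_{p'}^{-1/2}\phi\in D(A_{p'})\subset W^{2,p'}$ has zero trace on $\partial\mathbb{R}^{n}_{+}$. Writing $A_{p'}\psi=-\p\Delta\psi=-\Delta\psi+\nabla q$ with the Stokes pressure $q$ associated with $\psi$, and using $\D u=0$ together with the compact support of $u$ (so that $\langle u,\nabla q\rangle=0$ and integration by parts produces no boundary term), I get
\begin{align*}
\langle A_{p}^{1/2}u,\phi\rangle=-\langle u,\Delta\psi\rangle=\langle\nabla u,\nabla\psi\rangle.
\end{align*}
By H\"older's inequality and then (B.1) for the exponent $p'$, applied to $\psi=A_{p'}^{-1/2}\phi\in D(A_{p'}^{1/2})$,
\begin{align*}
|\langle A_{p}^{1/2}u,\phi\rangle|\leq\|\nabla u\|_{p}\,\|\nabla\psi\|_{p'}\leq C_{1}\|\nabla u\|_{p}\,\|A_{p'}^{1/2}\psi\|_{p'}=C_{1}\|\nabla u\|_{p}\,\|\phi\|_{p'}.
\end{align*}
Since $C_{c,\sigma}^{\infty}$ is dense in $L^{p'}_{\sigma}=(L^{p}_{\sigma})^{*}$, taking the supremum over $\phi$ with $\|\phi\|_{p'}\leq1$ yields $\|A_{p}^{1/2}u\|_{p}\leq C_{2}\|\nabla u\|_{p}$ for $u\in C_{c,\sigma}^{\infty}$, with $C_{2}$ a fixed multiple of $C_{1}$ (absorbing the norm of $\p$ on $L^{p'}$). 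Finally, for general $u\in W^{1,p}_{0,\sigma}=D(A_{p}^{1/2})$, I would approximate $u$ by $u_{k}\in C_{c,\sigma}^{\infty}$ in $W^{1,p}$; by the case just proved $\{A_{p}^{1/2}u_{k}\}$ is Cauchy in $L^{p}$, and since $A_{p}^{1/2}$ is closed and $u_{k}\to u$ in $L^{p}$, passing to the limit gives (B.2).

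The routine ingredients --- boundedness of $\p$, analyticity, density of $C_{c,\sigma}^{\infty}$, and (B.1) itself --- are all quotable, and on the half-space with the no-slip condition no boundary terms survive. The step needing genuine care, and which I regard as the main obstacle, is the first display: rigorously justifying $(A_{p}^{1/2})^{*}=A_{p'}^{1/2}$ (fractional powers of mutually adjoint sectorial operators; see, e.g., \cite{Sohr}) and then checking that the solenoidal structure of $u$ really does annihilate both the pressure pairing $\langle u,\nabla q\rangle$ and the integration-by-parts identity $\langle u,\Delta\psi\rangle=-\langle\nabla u,\nabla\psi\rangle$. I would therefore state these two facts precisely before carrying out the computation above.
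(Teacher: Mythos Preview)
Your proposal is correct and follows the same duality strategy as the paper: deduce (B.2) from (B.1) applied at the conjugate exponent. The difference is in how the key identity is obtained. You work in the $L^{p}$--$L^{p'}$ pairing, invoke $(A_{p}^{1/2})^{*}=A_{p'}^{1/2}$, solve $\psi=A_{p'}^{-1/2}\phi\in D(A_{p'})$, write $A_{p'}\psi=-\Delta\psi+\nabla q$, and integrate by parts to reach $\langle A^{1/2}u,\phi\rangle=\langle\nabla u,\nabla\psi\rangle$. The paper instead routes everything through $L^{2}$: since $A$ is positive self-adjoint there, the identity $(A^{1/2}u,A^{1/2}v)=(\nabla u,\nabla v)$ is immediate from the spectral representation, and the existence of $v\in D(A^{1/2})$ with $A^{1/2}v=\mathbb{P}\varphi$ (for $\varphi\in C_{c}^{\infty}$) is a one-line citation to \cite{Sohr}. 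Then (B.1) in $L^{p'}$ gives $\|\nabla v\|_{p'}\leq C\|A^{1/2}v\|_{p'}=C\|\mathbb{P}\varphi\|_{p'}\leq C'\|\varphi\|_{p'}$, and H\"older finishes the argument exactly as in yours.

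The payoff of the paper's $L^{2}$ detour is that it sidesteps precisely the two points you flag as delicate: you never need the $L^{p}$-adjoint of a fractional power, and you never need to verify the pressure-killing and integration-by-parts identities, because the Dirichlet-form equality on $L^{2}$ gives them for free. Your route is perfectly valid and perhaps more transparent about where the $p'$-estimate enters, but it carries the extra burden of justifying that $\phi\in\mathrm{Range}(A_{p'}^{1/2})$ on the half space (where $0$ is not in the resolvent set), which the paper handles by the cited $L^{2}$ lemma.
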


\vspace{5pt}

\begin{proof}
The estimate (B.1) is proved in \cite[Theorem 3.6 (ii)]{BM88}. We prove (B.2). It suffices to show (B.2) for $u\in C_{c,\sigma}^{\infty}$ since $W^{1,p}_{0,\sigma}$ is the closure of $C_{c,\sigma}^{\infty}$ in $W^{1,p}$. We use $L^{2}$-theory. Since the Stokes operator is a positive self-adjoint operator on $L^{2}$, by a spectral representation we are able to define the fractional power $A^{1/2}v$ for $v\in D(A^{1/2})=H^{1}_{0,\sigma}$ and we have
 
\begin{align*}
(A^{\frac{1}{2}}u, A^{\frac{1}{2}}v)=(\nabla u,\nabla v).
\end{align*}\\
Here, $(\cdot,\cdot)$ denotes the inner product on $L^{2}$. Moreover, for $\varphi\in C^{\infty}_{c}$ there exists $v\in D(A^{1/2})$ such that $A^{1/2}v=\mathbb{P}\varphi$ \cite[III.2.2.1 Lemma ]{Sohr}. Since the Helmholtz projection $\mathbb{P}$ acts as a bounded operator on $L^{p'}$ \cite{McCracken}, \cite[Theorem 3.1]{BM88}, it follows from (B.1) that 

\begin{align*}
||\nabla v||_{p'}
&\leq C||A^{\frac{1}{2}}v||_{p'}\\
&\leq C'||\varphi||_{p'},
\end{align*}\\
where $p'$ is the H\"older conjugate of $p\in (1,\infty)$. The above estimate implies that

\begin{align*}
|(A^{\frac{1}{2}}u,\varphi )|
&=|(A^{\frac{1}{2}}u, \mathbb{P}\varphi)|\\
&=|(A^{\frac{1}{2}}u,A^{\frac{1}{2}}v) |\\
&=|(\nabla u,\nabla v)|
\leq C'||\nabla u||_{p}||\varphi||_{p'}.
\end{align*}\\
Since $\varphi$ is arbitrary, the estimate (B.2) holds. 
\end{proof}

\vspace{5pt}

\begin{rem}
The estimates (B.1) and (B.2) hold also for bounded domains of class $C^{3}$ in $\mathbb{R}^{n}$, $n\geq 2$. The estimate (B.1) for bounded domains with smooth boundaries is proved in \cite{G85} by estimates of pure imaginary powers of the Stokes operator. Later, more strongly $\mathcal{H}^{\infty}$-calculus is proved in \cite{NS} with $C^{3}$-regularity (see also \cite{GeK}). By the same way as we have seen above, the estimate (B.2) is deduced from (B.1) also for bounded domains. 
\end{rem}

\vspace{10pt}

\section*{Acknowledgements}

The author is grateful to Professors Thierry Gallay and Yoshikazu Giga for informing him of the papers \cite{SawadaTaniuchi}, \cite{Zelik}. He is also grateful to Professors Paolo Maremonti and Senjo Shimizu for sending him their Preprint. Finally the author expresses his gratitude to Professor Toshiaki Hishida for informing him of the paper \cite{BM1992} on fractional power estimates, related to Remarks 2.6 (iii). This work was partially supported by JSPS through the Grant-in-aid for Research Activity Start-up 15H06312, Young Scientists (B) 17K14217 and Scientific Research (B) 17H02853.

\vspace{10pt}

\section*{Conflict of Interest}
The author declares that he has no conflict of interest.

\vspace{10pt}

%ref
\bibliographystyle{plain}

\bibliography{ref}

\end{document}